\newtheorem{Th}{Theorem}[section]
\newtheorem{Prop}[Th]{Proposition}
\newtheorem{Lem}[Th]{Lemma}
\newtheorem{Cor}[Th]{Corollary}
\newtheorem{Rem}[Th]{Remark}
\newenvironment{altproof}[1]
{\noindent%\addvspace{0.3cm}
{\em Proof of {#1}}.}
{\nopagebreak\mbox{}\hfill $\Box$\par\addvspace{0.5cm}}
   \newcommand{\vp}{\varphi}
   \newcommand{\eps}{\varepsilon}
   \def\div{\mathop{\mathrm{div}\,}}
   \def\supp{\mathrm{supp}}
   \def\N{\mathbb{N}}
   \def\R{\mathbb{R}}
   \def\curl{\mathrm{curl}}
   \def\dim{\mathrm{dim}}
   \def\cl{\mathrm{cl\,}}
   \def\P{\mathcal P} %P family
   \def\V{\mathcal{V}}
   \def\W{\mathcal{W}}
   \def\D{\mathcal{D}}
\newcommand{\cC}{{\mathcal C}}
\newcommand{\cE}{{\mathcal E}}
\newcommand{\cN}{{\mathcal N}}
\newcommand{\cO}{{\mathcal O}}
\newcommand{\cP}{{\mathcal P}}
\newcommand{\cT}{{\mathcal T}}
\newcommand{\cV}{{\mathcal V}}
\newcommand{\cW}{{\mathcal W}}
\renewcommand{\dim}{{\rm dim}\,}
\newcommand{\al}{\alpha}
\newcommand{\be}{\beta}
\newcommand{\ga}{\gamma}
\newcommand{\la}{\lambda}
\newcommand{\Om}{\Omega}
\def\curlop{\nabla\times}
\newcommand{\weakto}{\rightharpoonup}
\newcommand{\pa}{\partial}
\newcommand{\tX}{\widetilde{X}}
\newcommand{\tu}{\widetilde{u}}
\newcommand{\tv}{\widetilde{v}}
\newcommand{\tcV}{\widetilde{\cV}}
\newcommand{\cTto}{\stackrel{\cT}{\longrightarrow}}
\numberwithin{equation}{section}
\begin{document}
\title{The Brezis-Nirenberg problem for the curl-curl operator}
\author[J. Mederski]{Jaros\l aw Mederski}
\address[J. Mederski]{\newline\indent 
Institute of Mathematics,
\newline\indent
Polish Academy of Sciences,
\newline\indent 
ul. \'Sniadeckich 8, 00-956
Warszawa, Poland
\newline\indent
and
\newline\indent
Faculty of Mathematics and Computer Science,
\newline\indent 
Nicolaus Copernicus University,
\newline\indent
ul. Chopina 12/18, 87-100 Toru\'n, Poland}
\email{\href{mailto:jmederski@mat.umk.pl}{jmederski@impan.pl}}
%\date{}
%\date{\today}
\maketitle

\pagestyle{myheadings} \markboth{\underline{J. Mederski}}{
\underline{The Brezis-Nirenberg problem for the curl-curl operator}}

\begin{abstract}
We look for solutions $E:\Omega\to\mathbb{R}^3$ of the problem
\[
\left\{
\begin{aligned}
&\nabla\times(\nabla\times E) +\lambda E = |E|^{p-2}E &&\quad \text{in }\Omega\\
&\nu\times E = 0 &&\quad \text{on }\partial\Omega
\end{aligned}
\right.
\]
on a bounded Lipschitz domain $\Omega\subset\mathbb{R}^3$, where  $\nabla\times$ denotes the curl operator in $\mathbb{R}^3$. The equation describes the propagation of the time-harmonic electric field $\Re\{E(x)e^{i\omega t}\}$ in a nonlinear isotropic material $\Omega$ with $\lambda=-\mu \varepsilon \omega^2\leq 0$, where $\mu$ and $\varepsilon$ stand for
the permeability and the linear part of the permittivity of the material. The nonlinear term $|E|^{p-2}E$ with $p>2$ is responsible for the nonlinear polarisation of $\Omega$ and
the boundary conditions are those for $\Omega$ surrounded by a perfect conductor. The problem has a variational structure and we deal with the critical value $p$, for instance, in convex domains $\Omega$ or in domains with $\mathcal{C}^{1,1}$ boundary, $p=6=2^*$ is the Sobolev critical exponent and we get the quintic nonlinearity in the equation. We show that there exist a cylindrically symmetric ground state solution and a finite number of cylindrically symmetric bound states depending on $\lambda\leq 0$. We develop a new critical point theory which allows to solve the problem, and which enables us to treat more general anisotropic media as well as other variational problems.
\end{abstract}

{\bf MSC 2010:} Primary: 35Q60; Secondary: 35J20, 58E05, 35B33, 78A25

{\bf Key words:} time-harmonic Maxwell equations, perfect conductor, ground state, variational methods, strongly indefinite functional, Nehari-Pankov manifold, Brezis-Nirenberg problem, critical exponent.

\section{Introduction}\label{sec:intor}

The following equation
$$
\curlop\left(\mu^{-1}\curlop \cE\right)+\eps\partial_t^2 \cE
 = -\partial_t^2 \cP_{NL}.
$$
describes the propagation of the electric field $\cE$ in a nonlinear bounded medium $\Om$ with the permeability $\mu$,  the linear part of the permittivity $\eps$ and the nonlinear polarisation $\P_{NL}$; see Saleh and Teich \cite{FundPhotonics}. In the time-harmonic case the fields $\cE$ and $\cP_{NL}$ are of the form
$\cE(x,t) = E(x)e^{i\omega t}$, $\cP_{NL}(x,t) = P_{NL}(x)e^{i\omega t}$,
which leads to the time-harmonic Maxwell equation
\begin{equation}\label{eq:timeMaxwelleq}
\curlop\left(\mu^{-1}\curlop E\right)-\omega^2\eps E = \omega^2 P_{NL}.
\end{equation}
Since $P_{NL}$ depends on $E$, and assuming $\eps,\mu>0$ to be constant, we finally obtain an equation of the form
\begin{equation}%\label{eq:main_prev}
\curlop(\curlop E) + \la E = f(x,E) \qquad\textnormal{in } \Om,
\end{equation}
where $\la = -\mu\omega^2\eps\leq 0$ and $f(x,E)=\mu\omega^2 P_{NL}$. In particular, we concentrate on the following problem
\begin{equation}\label{eq:main}
\curlop(\curlop E) + \la E = |E|^{p-2}E \qquad\textnormal{in } \Om
\end{equation}
with $p>2$ and together with the boundary condition
\begin{equation}\label{eq:bc}
\nu\times E = 0\qquad\text{on }\pa\Om
\end{equation}
where $\nu:\pa\Om\to\R^3$ is the exterior normal. This boundary condition holds when $\Om$ is surrounded by a perfect conductor; see for instance \cite{Monk,Doerfler,BartschMederski1} and references therein.\\
\indent The linear time-harmonic Maxwell equations, i.e. when $P_{NL}=0$, have been extensively studied, e.g. \cite{BS,Monk,Leis68,Mitrea04,Picard01,Doerfler,Vico}, however the nonlinear case is still investigated only tangentially in the mathematical literature. Firstly, we would like to mention that if $\Om=\R^3$ then cylindrically symmetric transverse electric and transverse magnetic solutions have been considered in  a series of papers by Stuart and Zhou \cite{StuartZhou96,Stuart91,StuartZhou05,StuartZhou10,StuartZhou03,StuartZhou01,Stuart04}  for asymptotically linear $P_{NL}$  and by McLeod,  Stuart and Troy \cite{McLeodStuartTroy} for a cubic nonlinear polarization. 
The search for these solutions reduces to a one-dimensional varia\-tional problem or an ODE, which simplifies the problem considerably. The ODE methods, however, seem to be difficult to apply to our problem \eqref{eq:main}-\eqref{eq:bc}, since we easily show that nontrivial radial solutions do not exist. Indeed, if $E\in L_{loc}^{p-1}(\Om,\R^3)$ is a distributional solution of \eqref{eq:main} such that $E(x)=M^{T}E(Mx)$ for a.e. $x\in\Om$ and all $M\in\cO(3)$, then $\curlop E=0$ and $\lambda\geq 0$  similarly as in \cite{Bartsch:2014}[Theorem 1], hence  $\lambda=0$ and $E=0$.\\
\indent Recall that in \cite{BartschMederski1}, Bartsch and the author have dealt with general subcritical nonlinearities of the form $f(x,E)=\pa_E F(x,E)$ on a convex domain $\Om$ or on a simply-connected domain with the connected $\cC^{1,1}$ boundary, having $F(x,E)=\frac1p|E|^p$ with $2<p<6=2^*$ as a model in mind; see also the survey \cite{BartschMederskiSurvey} on the curl-curl problems in the subcritical case. The first goal of this paper is to find weak solutions to \eqref{eq:main} for the critically growing nonlineari\-ty with $p=6$ on such media $\Om$, which have not been considered in the mathematical literature so far. In the physical context, the critical nonlinearity represents the focusing quintic effect of the material and the nonlinear effect usually takes the form $ f(x,E) =  \chi^{(5)}|E|^4E-\chi^{(3)}|E|^2E$, where  $\chi^{(3)}$, $ \chi^{(5)}$ are corresponding susceptibility parameters. In this work we are able to deal with  the case, where $\chi^{(5)}>0$  and $\chi^{(3)}=0$.
Moreover we work on general Lipschitz domains with the following nonlinearity
\begin{equation}\label{eqCriticalrNonlinearity2}
F(x,E)=\frac{1}{p}|E|^{p},\quad  p=\frac{6}{3-2s},
\end{equation}  
where
$$X_N(\Om):=\left\{E\in H_0(\curl;\Om): \div(E)\in L^2(\Om,\R^3)\right\}$$
embeds continuously into $H^s(\Om,\R^3)$ for some $s\in [1/2,1]$ and $H_0(\curl;\Om)$ is the natural space for $\curlop(\curlop\cdot)$ operator with \eqref{eq:bc}; see \cite{Amrouche} and Section \ref{sec:varsetting} for details. Here $p$ is such that $H^s(\Om,\R^3)$ embeds continuously into $L^p(\Om,\R^3)$ but not necessarily compactly. For instance, for convex $\Om$ or for domains with $\cC^{1,1}$ boundary one has $s=1$ and $p=6$. Therefore equation \eqref{eq:main} is a three-dimensional variant of the well-known Brezis-Nirenberg problem \cite{BrezisNirenberg}
\begin{equation}\label{eq:BNproblem}
-\Delta u+ \lambda u = |u|^{2^*-2}u\quad\hbox{ for }u\in H_0^1(\Om),
\end{equation}
where $\Om$ is a bounded domain in $\R^N$, $N\geq 3$ and $2^*=\frac{2N}{N-2}$ is the critical Sobolev constant.\\
\indent On a suitable subspace $X\subset H_0(\curl;\Om)$ defined in Section \ref{sec:results}, weak solutions of \eqref{eq:main} correspond to critical points 
of the associated energy functional
\begin{equation}\label{eq:defOfJ}
J_\lambda(E)=\frac12\int_\Om |\curlop E|^2\, dx + \frac{\lambda}{2}\int_{\Om} |E|^2\, dx-\frac{1}{p}\int_\Om |E|^p\, dx,
\end{equation}  
%however we encounter several difficulties in order treat $J_\lambda$ by the available variational methods. Namely, similarly as in the subcitical case \cite{BartschMederski1,BartschMederski2},
$J_\lambda$ is unbounded from above and from below, even on subspaces of finite codimension and its critical points have infinite Morse index. Therefore the problem has the strongly indefinite nature. This is due to the fact that $\curlop(\curlop\cdot)$ has an infinite dimensional kernel, since $\curlop(\nabla\vp)=0$ for $\vp\in \cC_0^{\infty}(\Om)$. Although for $\lambda\leq 0$, $J_\lambda$  has a linking geometry in the spirit of Benci and Rabinowitz \cite{BenciRabinowitz,Rabinowitz:1986} we cannot apply these results to get a Palais-Smale sequence, since $J'$ is not (sequentially) weak-to-weak$^*$ continuous, i.e. the weak convergence $E_n\weakto E$ in $X$ does not imply that $J'_\lambda(E_n)\overset{\ast}{\weakto} J'_\lambda(E)$ in $X^*$; see also \cite{MederskiNLSS} for the recent linking results and the references therein, where this regularity is required. Moreover, even if we find somehow a bounded Palais-Smale sequence $E_n\weakto E$ we do not know whether $E$ is a critical point of  $J_\lambda$. This is caused, again, by the lack of  weak-to-weak$^*$ continuity of $J_\lambda'$.\\
\indent In order to find a Palais-Smale sequence we minimize $J_\lambda$ on a natural constraint $\cN_\lambda$, called the Nehari-Pankov manifold, which is contained in the usual Nehari manifold $\{E\neq 0|\; J'_\lambda(E)(E)=0\}$ inspired by works \cite{Pankov,SzulkinWeth}. As in \cite{BartschMederski1} we show that $\cN_\lambda$ is homeomorphic with the unit sphere in a subspace of $X_N$ consisting of divergence-free vector fields. This allows to find a minimizing sequence on the sphere, which is of $\cC^1$ class, and hence on the Nehari-Pankov manifold using the homeomorphism. However in \cite{BartschMederski1} for $p<6$ and in \cite{BartschMederski2} for more general materials and $p<\frac{6}{3-2s}$ we have been in a position to find a limit point of the sequence being a nontrivial critical point, since $X_N$ is compactly embedded into $L^p(\Om,\R^3)$. 
The methods of \cite{BartschMederski1,BartschMederski2} are no longer applicable in the critical case \eqref{eqCriticalrNonlinearity2}.\\
\indent Inspired by Brezis and Nirenberg \cite{BrezisNirenberg}, it would seem that a natural approach to solve \eqref{eq:main} is to find a Palais-Smale sequence below the energy level of a certain least energy solution $E:\R^3\to\R^3$ of the following limiting problem
\begin{equation}\label{limitingproblemR3}
\curlop(\curlop E)= |E|^{p-2}E \quad \text{in }\R^3,
\end{equation}
and show that the Palais-Smale condition holds in this case. Recall that the corresponding comparison of minimization problems concerning \eqref{eq:BNproblem} in \cite{BrezisNirenberg} is strongly based on the shape of all solutions of the limiting problem $-\Delta u=|u|^{2^*-2}u$ with $u\in \D^{1,2}(\R^N)$, called the Aubin-Talenti instantons \cite{Aub,Tal}, which are radial and given by the explicit formula. Moreover the lowest dimensional case $N=3$ for \eqref{eq:BNproblem} is the most challenging one to get the comparison of the minimization problems and to get the compactness of Palais-Smale sequences. Hence, in order to try to adopt this approach to our problem one should have the precise information about the shape of solutions of \eqref{limitingproblemR3}. However, in view of \cite{MederskiENZ}[Corollary 2.5] one sees that \eqref{limitingproblemR3} admits no classical solutions for
$p\neq 6$ and $p\geq 2$. Moreover, similarly as in\cite{Bartsch:2014}[Theorem 1] we show that for any $p\geq 2$ there are no radial weak solutions, so that the usual ODE methods are not applicable to find solutions to \eqref{limitingproblemR3} and their shapes. Note that a semilinear equation involving $\curlop(\curlop\cdot)$ in $\R^3$ has been also considered in 
\cite{BenFor,DAprileSiciliano,BenForAzzAprile} with different nonlinearities of subcritical growth for large fields and in a different physical context, as well as in \cite{Bartsch:2014,HirschReichel} with a subcritical nonlinearity and a similar physical motivation. Taking into account the methods of these works, instead of radial symmetry one could try to find cylindrically symmetric solutions to \eqref{limitingproblemR3} of the form
\begin{equation}\label{eq:symR^3}
E(x)=\frac{u(r,x_3)}{r}\begin{pmatrix}-x_2\\x_1\\0\end{pmatrix},\qquad x=(x_1,x_2,x_3)\in\R^3\hbox{ and } r=\sqrt{x_1^2+x_2^2},
\end{equation}
for some $u:(0,+\infty)\times\R\to\R$. By the direct computations we observe  that a field $E$ of the form \eqref{eq:symR^3} solves \eqref{limitingproblemR3} if and only if $\phi(x)=u(r,x_3)$ solves
\begin{equation}\label{eq:scalarGeneral}
-\Delta \phi + \frac{\phi}{r^2}= |\phi|^{p-2}\phi\quad\hbox{in }\R^3.
\end{equation}
Surprisingly, the last equation with $p=6$ has been also obtained by Esteban and Lions in  \cite{EstebanLions}[Theorem 3.9] as the limiting problem for a nonlinear Schr\"odinger equation of critical growth with an external magnetic field. To the best of our knowledge, no precise information about the shape of solutions of \eqref{eq:scalarGeneral} with $p=6$ has been provided, which would allow to adopt the methods of \cite{BrezisNirenberg}.\\
%Moreover the asymptotic behaviour, or more generally $L^{\infty}$-estimates of the solutions of  \eqref{limitingproblemR3} seem to be difficult to obtain by means of a Moser-type iteration due to the infinite-dimensional kernel of  $\curlop(\curlop(\cdot))$.\\
\indent Since the problem of the existence and information about the shape of solution to \eqref{limitingproblemR3}  is still open, we introduce a different approach than in \cite{BrezisNirenberg}, or in the related works; see for instance \cite{BartoloNA83,Comte,CeramiAIHP1984,CeramiJFA1986,DevilSolimini,Solimini} and references therein. Namely, we analyse the monotonicity of ground state levels 
$$c_\lambda:=\inf_{\cN_\lambda}J_\lambda\quad\hbox{ for }\lambda\in (-\infty,0],$$
and, roughly speaking, we show that if $c_\lambda$ is strictly increasing in some open interval $I\subset  (-\infty,0]$, then we find a Palais-Smale sequence contained in $\cN_\lambda$ at level $c_\lambda$ with a nontrivial weak limit point for all $\lambda\in I$. In the critical case, however, we do not know whether the limit point is a critical point due to the lack of the weak-to-weak$^*$ continuity of $J_\lambda'$, even on $\cN_\lambda$. In order to find critical points of $J_\lambda$ and solve \eqref{eq:main}, we restrict our approach to the subspace $X^{cyl}$ of cylindrically symmetric vector fields $E\in X^{cyl}\subset \V$ of the form
\begin{equation}\label{eq:sym}
E(x)=u(r,x_3)\begin{pmatrix}-x_2\\x_1\\0\end{pmatrix},\qquad x=(x_1,x_2,x_3)\in\Om\hbox{ and } r=\sqrt{x_1^2+x_2^2},
\end{equation}
for some $u:(0,+\infty)\times\R\to\R$, where $\Om$ is assumed to be invariant under the orthogonal group action $G = \cO(2)\times{1}\subset \cO(3)$. Since $J_\lambda'$ is weak-to-weak$^*$ continuous in $\V$, then the analysis of the monotonicity of symmetric ground state levels 
$$c_\lambda^{cyl}:=\inf_{\cN_\lambda^{cyl}}J_\lambda\quad\hbox{ for }\lambda\in (-\infty,0],$$
where $\cN_\lambda^{cyl}$ is the Nehari-Pankov manifold for $J_\lambda|_{X^{cyl}}$,
shows that $c_\lambda^{cyl}$ is attained in some  open intervals $I\subset  (-\infty,0]$. In order to find such intervals we compare $c_\lambda^{cyl}$ with $c_0^{cyl}$, the new limiting problem \eqref{eq:main} at level $\lambda=0$ still on the same bounded domain. Moreover, if $-\lambda$ is in a right neighbourhood of an eigenvalue of $\curlop(\curlop(\cdot))$ in $X^{cyl}$, then the number of solutions is bounded from below by the multiplicity of the eigenvalue. The precise statement of our results concerning \eqref{eq:main} is presented in the next Section \ref{sec:results}.\\
\indent The remaining part of the paper is organised as follows. In Section \ref{sec:Nehari} we build an abstract critical point theory which enables us to analyse the monotonicity of ground state levels of general strongly indefinite functionals on the Nehari-Pankov manifold, and which provides the information about the existence and the multiplicity of critical points. Section \ref{sec:varsetting} is devoted to proof of the results from Section \ref{sec:results}.  Our critical point theory  can be applied to other variational problems and %in Section \ref{sec:BN} we present some new results concerning the Brezis-Nirenberg \eqref{eq:BNproblem} as well as 
in the last Section \ref{sec:AM} we study the time-harmonic Maxwell equation in more general anisotropic media.

\section{Statement of results}\label{sec:results}

Throughout the paper we assume that $\Om\subset\R^3$ is a bounded Lipschitz domain. Recall that the proper space for the curl-curl eigenvalue problem is
$$
H(\curl;\Om) := \{E\in L^2(\Om,\R^3): \curlop E \in L^2(\Om,\R^3)\},
$$
which is a Hilbert space when provided with the graph norm
$$
\|E\|_{H(\curl;\Om)} := \left(|E|^2_2+|\curlop E|^2_2\right)^{1/2}.
$$
Here and in the sequel $|\cdot|_q$ denotes the $L^q$-norm for $q\geq 1$, $q=\infty$. The curl of $E$, $\curlop E$, has to be understood in the distributional sense. The closure of $\cC^{\infty}_0(\Om,\R^3)$ in $H(\curl;\Om)$ is denoted by $H_0(\curl;\Om)$.\\
\indent We also need the subspace
\[
\cV := \left\{v\in H_0(\curl;\Om): \int_\Om\langle v,\varphi\rangle\,dx=0
        \text{ for every $\varphi\in \cC^\infty_0(\Om,\R^3)$ with $\curlop\varphi=0$} \right\}.
\]
and observe that
\[
\begin{aligned}
\cV
% &\subset \{E\in W^p_0(\curl;\Om):\div(E)=0\}\\
&\subset\left\{E\in H_0(\curl;\Om): \div(E)\in L^2(\Om,\R^3)\right\} =: X_N(\Om).
\end{aligned}
\]
In view of \cite{Amrouche,Costabel}, we know that
$X_N(\Om)$ embeds continuously into $H^{s}(\Om,\R^3)$ for some $s\in [1/2,1]$, but if, in addition $\Om$ is convex or has $\cC^{1,1}$-boundary, then $X_N(\Om)$ embeds into $H^{1}(\Om,\R^3)$. Thus we work with the following general assumption.
\begin{itemize}
\item[$(V)$] $\cV$ is continuously embedded into $L^p(\Om,\R^3)$ 
% and compactly into $L^{p-1}(\Om,\R^3)$ 
for some $2<p\leq 6$.
\end{itemize}
Below we show that the embedding $\cV\subset L^p(\Om,\R^3)$ is not  compact for $p=2^*=6.$
\begin{Rem} \label{RemMainRes}
Suppose that $\Om$ is star-shaped, i.e. there is $x_0\in \Om$ such that $x_0+t(x-x_0)\in\Om$ for any $t\in [0,1]$ and $x\in\Om$. 
Let $E\in H_0(\curl;\Om)$ and take $\vp_n\in\cC_0^{\infty}(\Om,\R^3)$ such that $E=\lim_{n\to\infty}\vp_n$ in $H_0(\curl;\Om)$.
For any $\eps\in (0,1]$ and $x\in\Om$ we set
%$$\vp_{n,\eps}(x)=\eps^{-1/2}\vp_n(x_0+(x-x_0)/\eps).$$
\[
\vp_{n,\eps}(x)=\left\{
\begin{aligned}
&\eps^{-1/2}\vp_n((x-x_0)/\eps) &&\quad 
\text{if } (x-x_0)/\eps\in\Om\\
&0 &&\quad \text{if } (x-x_0)/\eps\notin\Om.
\end{aligned}
\right.
\]
We easily check that 
$\supp(\vp_{n,\eps})\subset \Om_\eps\subset \Om$ and $\vp_{n,\eps}\in\cC_0^{\infty}(\Om,\R^3)$ for sufficiently small $\eps>0$,  where
$\Om_\eps=\{x_0+\eps x: x\in\Om\}$.
 Observe that for $n,m\geq 1$ one has
\begin{eqnarray*}
\|\vp_{n,\eps}-\vp_{m,\eps}\|^2
%&=& \int_\Om \eps^{-3}|\curlop (\vp_{n}(x_0+(x-x_0)/\eps)-\vp_{m}(x_0+(x-x_0)/\eps))|^2\, dx\\
%&& + \int_\Om \eps^{-1}|\vp_{n}(x_0+(x-x_0)/\eps)-\vp_{m}(x_0+(x-x_0)/\eps)|^2\, dx\\
&=& |\curlop (\vp_{n}-\vp_{m})|_2^2 +\eps^2 |\vp_{n}-\vp_{m}|_2^2.
\end{eqnarray*}
Hence  $(\vp_{n,\eps})$ is a Cauchy sequence and
we find the limit 
\begin{equation}\label{eq:defOfE_eps}
E_\eps=\lim_{n\to\infty}\vp_{n,\eps}\hbox{ in }H_0(\curl;\Om).
\end{equation}
 Observe that if $E\in\cV$, then for any $\vp\in\cC_0^{\infty}(\Om,\R^3)$ such that $\curlop \vp =0$ one has
\begin{eqnarray*}
\int_\Om\langle E_\eps,\varphi\rangle\,dx&\leq&
|E_\eps-\vp_{n,\eps}|_2|\vp|_2+\int_{\Om_\eps}\langle \eps^{-1/2}\vp_{n}((x-x_0)/\eps),\vp(x)\rangle\,dx\\
&\leq&
|E_\eps-\vp_{n,\eps}|_2|\vp|_2+\eps^{3-1/2}\int_\Om\langle \vp_{n}(y),\vp(x_0+\eps y)\rangle\,dy\\
&\to& \eps^{5/2}\int_\Om\langle E(y),\vp(x_0+\eps y)\rangle\,dy = 0
\end{eqnarray*}
as $n\to\infty$, since $\curlop\vp(x_0+\eps(\cdot))=0$. Thus $E_\eps\in\V$. Now let $E\in \cV\setminus\{0\}$, $\cV$ embeds continuously in $L^6(\Om,\R^3)$ and $\eps\to 0$. Then for any $\vp\in\cC_0^{\infty}(\Om,\R^3)$ we show that
$$\int_{\Om}\langle \curlop E_\eps, \vp \rangle\, dx\to 0,$$
hence $E_\eps\weakto 0$ in $\V$.
%\begin{eqnarray*}
%\int_{\Om}\langle \curlop E_\eps, \vp \rangle\, dx
%&=&\lim_{n\to\infty}\eps^{-3/2}\int_{\Om_\eps}\langle \curlop \vp_n((x-x_0)/\eps), \vp(x) \rangle\, dx\\
%&=& \lim_{n\to\infty}\eps^{3/2}\int_{\Om}\langle \curlop \vp_n(y)), \vp(x_0+\eps y) \rangle\, dy\\
%&\to& 0
%\end{eqnarray*}
%$\|E_\eps\|$ is bounded
%and $E_\eps(x)\to 0$ a.e. on $\Om$. 
On the other hand
$$|E_\eps|_6^6=\lim_{n\to\infty}\int_{\Om_\eps} \eps^{-3}|\vp_{n}((x-x_0)/\eps)|^6\, dx=\lim_{n\to\infty}|\vp_n|_6^6=|E|_6^6,$$
and the embedding $\cV\subset L^6(\Om,\R^3)$ cannot be compact.
\end{Rem}
In order to state our results we introduce the space
$$
W^p(\curl;\Om) := \{E\in L^p(\Om,\R^3): \curlop E\in L^2(\Om,\R^3)\}\subset H(\curl;\Om)
$$
which is a Banach space if provided with the norm
$$
\|E\|_{W^p(\curl;\Om)} := \left(|E|^2_p+|\curlop E|^2_2\right)^{1/2}.
$$
We shall look for solutions of \eqref{eq:main} in the closure
$$X:=W^p_0(\curl;\Om)\subset H_0(\curl;\Om)$$
of $\cC^{\infty}_0(\Om,\R^3)$ in $W^p(\curl;\Om)$. Observe that $\cV$ is a closed linear subspace of $W^p_0(\curl;\Om)$ as a consequence of $(V)$. Moreover, since for every $\varphi\in\cC^\infty_0(\Om,\R^3)$ the linear map
\[
E \mapsto \int_\Om \langle E,\curlop\varphi\rangle dx
\]
is continuous on $W^p_0(\curl;\Om)\subset H(\curl;\Om)$, the space
\[
\begin{aligned}
\cW
 &:= \left\{w\in W^p_0(\curl;\Om):\int_\Om\langle w,\curlop\varphi\rangle = 0
     \text{ for all }\varphi\in\cC^\infty_0(\Om,\R^3)\right\}\\
 &= \{w\in W^p_0(\curl;\Om): \curlop w=0\}
\end{aligned}
\]
is a closed complement of $\cV$ in $W^p_0(\curl;\Om)$ (cf. \cite{KirschHettlich}[Theorem 4.21 c)]), hence there is a Helmholtz type decomposition 
$$X=W^p_0(\curl;\Om) = \cV\oplus\cW.$$
\indent We will show that the spectrum of the curl-curl operator in $H_0(\curl;\Om)$ consists of the eigenvalue $\lambda_0=0$ with infinite multiplicity and of a sequence of eigenvalues 
$$0<\la_1\le \la_2\le\dots\le\la_k\to\infty$$ with the corresponding finite multiplicities $m(\lambda_k)\in\N$. For $\lambda\leq 0$ we find two closed and orthogonal subspaces $\cV^+$ and $\tcV$ of $\V$ such that the quadratic form $Q:\cV\to\R$
given by
\begin{equation}\label{eq:DefQ}
Q(v)=\int_\Om |\curlop v|^2+\lambda |v|^2\, dx
\end{equation}
is positive on $\cV^+$ and semi-negative on $\tcV$, where $\dim\tcV<\infty$. Now we can define the so-called {\em Nehari-Pankov manifold}
\begin{equation}\label{eq:DefOfN}
\cN_\lambda=\{E\in X\setminus(\tcV\oplus \W):\; J_\lambda'(E)|_{\R E\oplus\tcV\oplus \W}=0\},
\end{equation}
being a closed subset of the usual Nehari manifold 
$$\{E\in X\setminus\{0\}:\; J_\lambda'(E)(E)=0\}.$$
It is not clear whether $\cN_\lambda$ is of class $\cC^1$, however we show that it is homeomorphic with the unit sphere in $\cV^+$, hence  $\cN_\lambda$ is an infinite dimensional topological manifold of infinite codimension.  
For any $\lambda\leq 0$ we set
$$c_\lambda=\inf_{\cN_\lambda}J_\lambda.$$
Firstly we present a partial result which does not solve \eqref{eq:main}, but we show the existence of a minimizing sequence for some $\lambda\leq 0$ with a nontrivial weak limit point.

\begin{Th}\label{thm:main} 
Let $(V)$ be satisfied and $-\lambda_\nu< \lambda\leq -\lambda_{\nu-1}$ for some $\nu\geq 1$.  
Then $c_\lambda >0$ and the following statements hold.\\
a) If $\lambda<-\lambda_\nu+S\mu(\Om)^{\frac{2-p}{p}}$, then $c_\lambda < c_0$ and
there is a Palais-Smale sequence $(E_n)\subset \cN_{\lambda}$ such that $J_\lambda(E_n)\to c_\lambda>0$ and $E_n\weakto E_0\neq 0$ in $W^p_0(\curl;\Om)$, where $\mu(\Om)$ is the Lebesgue measure of $\Om$ and
\begin{equation*}%\label{Sobolevconstant}
S=\inf_{|v|_p=1,\;v\in\V}\int_{\Om}|\curlop v|^2\,dx.
\end{equation*}
b) The function $(-\lambda_{\nu},-\lambda_{\nu-1}]\ni\lambda\mapsto c_\lambda\in (0,+\infty)$ is non-decreasing, continuous and $c_\lambda\to 0$ as $\lambda\to -\lambda_\nu^- $. If $c_{\mu_1}=c_{\mu_2}$  for some $-\lambda_{\nu}<\mu_1<\mu_2\leq-\lambda_{\nu-1}$, then 
%$c_\lambda=c_0$ and 
$c_\lambda$ is not attained for $\lambda\in (\mu_1,\mu_2]$.
\end{Th}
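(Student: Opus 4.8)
The plan is to establish the four assertions in the stated order: (i) the strict positivity $c_\lambda>0$; (ii) the strict inequality $c_\lambda<c_0$ under the hypothesis of part a) together with the existence of a Palais–Smale sequence on $\cN_\lambda$ with a nontrivial weak limit; (iii) the monotonicity and continuity of $\lambda\mapsto c_\lambda$ on the interval $(-\lambda_\nu,-\lambda_{\nu-1}]$ together with the boundary behaviour $c_\lambda\to 0$ as $\lambda\to-\lambda_\nu^-$; (iv) the non-attainment statement when $c_{\mu_1}=c_{\mu_2}$. Throughout I would work with the homeomorphism between $\cN_\lambda$ and the unit sphere in $\cV^+$ provided earlier in the paper, which lets one transport the minimization to a $\cC^1$ setting on the sphere and invoke Ekeland's variational principle there.

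For (i): since $-\lambda_\nu<\lambda\le-\lambda_{\nu-1}$, the quadratic form $Q$ is positive definite on $\cV^+$ and the splitting $\cV=\cV^+\oplus\tcV$ holds with $\dim\tcV<\infty$. On $\cN_\lambda$ one has $J_\lambda(E)=J_\lambda(E)-\tfrac1p J_\lambda'(E)(E)=(\tfrac12-\tfrac1p)|\curlop E|_2^2+(\tfrac12-\tfrac1p)\lambda|E|_2^2$, and using that $E$ solves $J_\lambda'(E)|_{\R E\oplus\tcV\oplus\W}=0$ one controls the negative and kernel directions; combining this with assumption $(V)$ (the embedding $\cV\hookrightarrow L^p$) and the fact that $J_\lambda'(E)(E)=0$ forces $|E|_p$ to be bounded away from zero, one gets a uniform positive lower bound. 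For (iii), monotonicity follows because for $\mu_1<\mu_2$ in the interval the functional $J_{\mu_2}\le J_{\mu_1}$ pointwise on $X$ (as $|E|_2^2\ge0$ and $\mu_2\ge\mu_1$... careful: $\lambda\le 0$, and larger $\lambda$ means $J_\lambda$ larger), so one compares the two Nehari-Pankov manifolds via the homeomorphism with the common sphere in $\cV^+$ (which changes with $\lambda$, but the max of $J_\lambda$ along each ray is monotone in $\lambda$), yielding $c_{\mu_1}\le c_{\mu_2}$. Continuity is a standard semicontinuity argument in both directions using the ray-maximization characterization. The limit $c_\lambda\to 0$ as $\lambda\to-\lambda_\nu^-$ uses that when $\lambda$ approaches $-\lambda_\nu$ from the left, $\cV^+$ loses the $\lambda_\nu$-eigenspace and $Q$ degenerates there: testing $J_\lambda$ on (the ray through) a $\lambda_\nu$-eigenfunction gives a ground-state value tending to $0$.

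For (ii): the strict inequality $c_\lambda<c_0$ is obtained by the concentration construction of Remark \ref{RemMainRes}. Fix a suitable $v\in\cV$ realizing (or nearly realizing) the constant $S$, rescale it as $v_\eps$ concentrating at a star-shaped point of $\Om$ so that $|\curlop v_\eps|_2^2\to S\,\mu(\Om)^{?}$, $|v_\eps|_p^p$ stays fixed and $|v_\eps|_2^2\to0$; then the max of $J_\lambda$ along the appropriate ray (through $v_\eps$, with the $\tcV\oplus\W$-component chosen by the Nehari-Pankov equation) is estimated from above, and the gain from the negative term $(\lambda+\lambda_\nu)|v_\eps|_2^2$ versus the loss is controlled precisely by the threshold $\lambda<-\lambda_\nu+S\mu(\Om)^{(2-p)/p}$. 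This is the delicate quantitative step. Given $c_\lambda<c_0$, one obtains via Ekeland on the sphere in $\cV^+$ a minimizing, hence Palais–Smale, sequence $(E_n)\subset\cN_\lambda$; boundedness follows from the coercivity of $J_\lambda$ on $\cN_\lambda$ in the $\cV$-directions, so up to a subsequence $E_n\weakto E_0$ in $X$, and one shows $E_0\ne 0$ because if $E_0=0$ then (using the concentration-compactness dichotomy together with $c_\lambda<c_0\le$ the least "energy at infinity") the energy would have to equal $c_0$ or escape, contradicting $c_\lambda<c_0$. For (iv): if $c_{\mu_1}=c_{\mu_2}$ and $c_\lambda$ were attained at some $\lambda_0\in(\mu_1,\mu_2]$ by $E\in\cN_{\lambda_0}$, then by the strict monotonicity of $\lambda\mapsto J_\lambda(E)$ along a fixed field one would get $c_{\mu_1}\le J_{\mu_1}(\text{ray-max through }E)<J_{\lambda_0}(E)=c_{\lambda_0}\le c_{\mu_2}$, contradicting equality.

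The main obstacle is step (ii), and within it the precise energy estimate for the concentrating family: one must show that the ray-maximum of $J_\lambda$ over $\R E_\eps\oplus\tcV\oplus\W$ through the rescaled field drops below $c_0$, which requires sharp control of the cross terms between $E_\eps$ and the finite-dimensional spaces $\tcV$ and $\W$ (the kernel part) as $\eps\to0$, and a careful bookkeeping of the exact power $\mu(\Om)^{(2-p)/p}$ coming from the scaling of $|{\cdot}|_p$ versus $|\curlop{\cdot}|_2$. A secondary difficulty is ruling out $E_0=0$: since the embedding $\cV\hookrightarrow L^p$ is not compact (Remark \ref{RemMainRes}), one genuinely needs the level comparison $c_\lambda<c_0$ to exclude vanishing/escape of mass, rather than any compactness of the embedding.
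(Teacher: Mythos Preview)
Your overall structure is reasonable, and parts (i), (iii), (iv) are essentially on the right track (in the paper these are handled by the abstract machinery of Section~\ref{sec:Nehari}, in particular Theorem~\ref{ThContinuityuOfClambda} and the inequality \eqref{eq:Abstractest1}, but your direct arguments would work with a bit more care). The substantive problem is in step (ii), both in how you obtain $c_\lambda<c_0$ and in how you rule out $E_0=0$.

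\textbf{The upper bound $c_\lambda<c_0$.} Your proposed concentration argument cannot give this inequality. You yourself note that for the rescaled fields $v_\eps$ of Remark~\ref{RemMainRes} one has $|v_\eps|_2^2\to 0$ while $|\curlop v_\eps|_2$ and $|v_\eps|_p$ are preserved; but then the ``gain from the negative term $(\lambda+\lambda_\nu)|v_\eps|_2^2$'' vanishes as $\eps\to 0$, so the ray-maximum of $J_\lambda$ through $v_\eps$ tends to the $\lambda=0$ level, not below it. The factor $\mu(\Om)^{(2-p)/p}$ does \emph{not} come from any scaling: it comes from H\"older's inequality $|E|_2^2\le \mu(\Om)^{(p-2)/p}|E|_p^2$ applied to a \emph{fixed} test field. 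The paper (Lemma~\ref{LemEstimateLevels}) simply tests against an eigenvector $e_\nu$ of $\lambda_\nu$: writing the corresponding point of $\cN_\lambda$ as $v+w=te_\nu+\tv+w$, one uses $\|\curlop(te_\nu)\|_2^2=\lambda_\nu|te_\nu|_2^2$, $Q(\tv)\le 0$, and the $L^2$-orthogonality of $\cV$ and $\cW$ to reach
\[
c_\lambda\le \frac{\lambda+\lambda_\nu}{2}\,|v+w|_2^2-\frac{1}{p}\,|v+w|_p^p
\le \frac{\lambda+\lambda_\nu}{2}\,\mu(\Om)^{\frac{p-2}{p}}|v+w|_p^{2}-\frac{1}{p}\,|v+w|_p^p
\le \Big(\tfrac12-\tfrac1p\Big)(\lambda+\lambda_\nu)^{\frac{p}{p-2}}\mu(\Om).
\]
Together with the elementary lower bound $c_0\ge d_\lambda\ge(\tfrac12-\tfrac1p)S^{p/(p-2)}$ (from the definition of $S$), this immediately yields $c_\lambda<c_0$ under the stated threshold, and also gives the limit $c_\lambda\to 0$ as $\lambda\to-\lambda_\nu^-$ for free. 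No concentration is involved.

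\textbf{The nontrivial weak limit.} Your one-line ``concentration-compactness dichotomy'' hides the real issue: there is no translation/dilation group acting here, and $J_\lambda'$ is not weak-to-weak$^*$ continuous on $\cN_\lambda$, so the usual splitting arguments do not apply directly. The paper's route is different and more robust: it introduces a \emph{compactly perturbed} functional $J_{cp}$ (dropping the term $\tfrac{\lambda}{2}\int_\Om|v|^2$, which is compact since $\cV\hookrightarrow L^2$ compactly) with its own Nehari--Pankov manifold $\cN_{cp}$ and level $d_\lambda\le c_0$, checks the abstract conditions (C1)--(C2), and then applies Theorem~\ref{ThLink2}\,a): if the weak limit of a bounded $(PS)_{c_\lambda}$-sequence in $\cN_\lambda$ were zero, one would transfer it to $\cN_{cp}$ and deduce $c_\lambda\ge d_\lambda$, contradicting $c_\lambda<d_\lambda$. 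You should replace your vague dichotomy by this mechanism (or supply a genuine substitute).
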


Due to the lack of weak-to-weak$^*$ continuity of $J_\lambda'$, we do not know if $E_0$ in Theorem \ref{thm:main} a) is a critical point and it is still an open question whether $c_\lambda$ is attained for some $\lambda\leq 0$ and $2<p\leq 6$ such that $(V)$ holds. Recall that if $\cV$ is compactly embedded into $L^p(\Om,\R^3)$ with $2<p<6$, then by Lemma \ref{eq:weaktoweakSubcritical},  $J_\lambda'$ is weak-to-weak$^*$ continuous on $\cN_\lambda$, and
$c_\lambda$ is attained for every $\lambda\leq 0$; see \cite{BartschMederski1,BartschMederski2}.\\
\indent In order to solve \eqref{eq:main} we assume that $\Om$ is $G=\cO(2)\times 1$-invariant. Then  we may define a subspace $X^{cyl}\subset \V$ of fields of the form \eqref{eq:main}, such that, by the Palais principle of symmetric criticality, critical points of $J_\lambda|_{X^{cyl}}$ are critical points of the free functional $J_\lambda$, hence weak solutions to \eqref{eq:main}. Similarly as above, there is a sequence of eigenvalues 
$$0<\la_1^{cyl}\le \la_2^{cyl}\le\dots\le\la_k^{cyl}\to\infty$$
of $\curlop(\curlop (\cdot))$ in $X^{cyl}$ with the corresponding finite multiplicities $m(\lambda_k^{cyl})\in\N$. Clearly, the set of eigenvalues $\{\lambda_k^{cyl}: k\in\N\}$ is contained in $\{\lambda_k: k\in\N\}$,
and
again, for $\lambda\leq 0$ we find two closed and orthogonal subspaces $\cV^{cyl+}$ and $\tcV^{cyl}$ of $X^{cyl}$ such that the quadratic form $Q$
is positive on $\cV^{cyl+}$ and semi-negative on $\tcV^{cyl}$. Then the {\em symmetric Nehari-Pankov manifold} is given by
\begin{equation}\label{eq:DefOfNcyl}
\cN^{cyl}_\lambda:=\{E\in X^{cyl}\setminus\tcV^{cyl}:\; J_\lambda'(E)|_{\R E\oplus\tcV^{cyl}}=0\},
\end{equation}
and
$$c_\lambda^{cyl}:=\inf_{\cN_\lambda^{cyl}}J_\lambda.$$
Critical points of $J_\lambda$ in $\cN_\lambda^{cyl}$ that attain $c_\lambda^{cyl}$ will be called {\em symmetric ground states}.\\
\indent Our first existence result  reads as follows.
\begin{Th}\label{thm:main2} Let $(V)$ be satisfied, suppose that $\Om$ is $G$-invariant and $-\lambda_\nu^{cyl}< \lambda\leq -\lambda_{\nu-1}^{cyl}$ for some $\nu\geq 1$. Then $c_\lambda^{cyl}>0$ and there is 
$$\eps_\nu\in [S\mu(\Om)^{\frac{2-p}{p}},\lambda_{\nu}^{cyl}]$$ such that the following statements hold.\\
a) If $\lambda\in(-\lambda_{\nu}^{cyl},-\lambda_{\nu}^{cyl}+\eps_\nu)$, then there is a symmetric ground state solution to \eqref{eq:main}, i.e. $c_\lambda^{cyl}$ is attained by a critical point of $J_\lambda$. Moreover $c_\lambda^{cyl}<c_0^{cyl}$.\\
b) If $\eps_\nu<\lambda_\nu^{cyl}-\lambda_{\nu-1}^{cyl}$, then $c_\lambda^{cyl}$ is not attained for $\lambda\in(-\lambda_{\nu}^{cyl}+\eps_n,-\lambda_{\nu-1}^{cyl}]$, and $c_\lambda^{cyl}=c_0^{cyl}$  for $\lambda\in[-\lambda_{\nu}^{cyl}+\eps_n,-\lambda_{\nu-1}^{cyl}]$.\\
%\todo{Moze ulepszyc teze}\\
c) $c_\lambda^{cyl}\to 0$ as $\lambda\to -\lambda_\nu^{cyl-}$, and the function $$(-\lambda_{\nu}^{cyl},-\lambda_{\nu}^{cyl}+\eps_\nu]\cap (-\lambda_{\nu}^{cyl},-\lambda_{\nu-1}^{cyl}]\ni\lambda\mapsto c_\lambda^{cyl}\in (0,+\infty)$$ is continuous and strictly increasing.
\\
d) If 
$
\tilde{m}(\lambda):=\sharp\big\{k: -\lambda^{cyl}_k <\lambda <-\lambda^{cyl}_k + S\mu(\Om)^{\frac{2-p}{p}}\big\},$
%$\lambda\in I:=(-\lambda_\nu^{cyl}, -\lambda_\nu^{cyl}+ S\mu(\Om)^{\frac{2-p}{p}})$,
then there are at least $\tilde{m}(\lambda)$ pairs of solutions $E$ and $-E$ to \eqref{eq:main} of the form \eqref{eq:sym}. Moreover, if $\mu_n\to\mu_0$ in $I$ as $n\to\infty$, and $E_n$ is a symmetric ground state of $J_{\mu_n}$ for $n\geq 1$, then passing to a subsequence, $(E_n)$ tends to a symmetric ground state $E_0$ of $J_{\mu_0}$ in the strong topology of $W^p_0(\curl;\Om)$. In particular, the set of symmetric ground states of $J_\lambda$ is compact for $\lambda\in I$.
\end{Th}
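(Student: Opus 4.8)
The plan is to run the argument behind Theorem~\ref{thm:main} on the symmetric subspace $X^{cyl}$, where one ingredient becomes available that is missing in the full space: since $X^{cyl}\subset\V$ and, in contrast to the situation on $X$, $J_\lambda'$ is weak-to-weak$^*$ continuous on $\V$, a \emph{nonzero} weak limit of a Palais-Smale sequence is automatically a critical point, hence by the Palais principle of symmetric criticality a weak solution of \eqref{eq:main}. First I would apply the abstract theory of Section~\ref{sec:Nehari} to $J_\lambda|_{X^{cyl}}$ to get, exactly as for Theorem~\ref{thm:main}: $\cN_\lambda^{cyl}$ is homeomorphic to the unit sphere of $\cV^{cyl+}$; $c_\lambda^{cyl}>0$ for $-\lambda_\nu^{cyl}<\lambda\le-\lambda_{\nu-1}^{cyl}$ (positive definiteness of $Q$ on $\cV^{cyl+}$); a bounded minimizing Palais-Smale sequence $(E_n)\subset\cN_\lambda^{cyl}$ at level $c_\lambda^{cyl}$ exists, from Ekeland's variational principle on the sphere transported by the homeomorphism; $c_\lambda^{cyl}\to0$ as $\lambda\to-\lambda_\nu^{cyl-}$; and $\lambda\mapsto c_\lambda^{cyl}$ is non-decreasing on $(-\lambda_\nu^{cyl},-\lambda_{\nu-1}^{cyl}]$. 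Moreover $c_\lambda^{cyl}\le c_0^{cyl}$ by comparison with the $\lambda=0$ problem on the same domain (as in Theorem~\ref{thm:main}), using that $J_\lambda\le J_0$ pointwise for $\lambda\le0$ and that $\tcV^{cyl}$ reduces to $\{0\}$ at $\lambda=0$. I then let $\eps_\nu\in(0,\lambda_\nu^{cyl}]$ be the supremum (capped at $\lambda_\nu^{cyl}$) of those $\eps$ for which $c_\lambda^{cyl}<c_0^{cyl}$ holds throughout $(-\lambda_\nu^{cyl},-\lambda_\nu^{cyl}+\eps)$; by the monotonicity this is the threshold below which $c_\lambda^{cyl}<c_0^{cyl}$, and $\eps_\nu\ge S\mu(\Om)^{\frac{2-p}{p}}$ is the symmetric analogue of Theorem~\ref{thm:main}~a).

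\emph{Proof of a) and of the strict monotonicity in c).} Fix $\lambda\in(-\lambda_\nu^{cyl},-\lambda_\nu^{cyl}+\eps_\nu)$, so $c_\lambda^{cyl}<c_0^{cyl}$, and take $(E_n)\subset\cN_\lambda^{cyl}$ as above with $E_n\weakto E_0$ in $W^p_0(\curl;\Om)$; then $J_\lambda'(E_0)=0$ by weak-to-weak$^*$ continuity on $\V$. The crucial claim is $E_0\ne0$. If $E_0=0$, then $E_n\to0$ in $L^2(\Om,\R^3)$ by the compact embedding $\V\subset X_N(\Om)\hookrightarrow L^2(\Om,\R^3)$, so from the Nehari identity $|\curlop E_n|_2^2-|E_n|_p^p=-\lambda|E_n|_2^2\to0$ and $J_\lambda(E_n)=(\tfrac12-\tfrac1p)|E_n|_p^p+o(1)$; since $c_\lambda^{cyl}>0$ forces $|E_n|_p^p\to\beta>0$, rescaling $E_n$ to $t_nE_n\in\cN_0^{cyl}$ with $t_n\to1$ gives $c_0^{cyl}\le\lim_nJ_0(t_nE_n)=c_\lambda^{cyl}$, a contradiction. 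With $E_0\ne0$: testing $J_\lambda'(E_0)=0$ against $E_0$ shows $E_0\notin\tcV^{cyl}$ (there $Q\le0$, but the Nehari identity forces $Q(E_0)=|E_0|_p^p>0$), so $E_0\in\cN_\lambda^{cyl}$ and $J_\lambda(E_0)\ge c_\lambda^{cyl}$; conversely $J_\lambda\equiv(\tfrac12-\tfrac1p)|\cdot|_p^p$ on the Nehari manifold, so weak lower semicontinuity of $|\cdot|_p$ yields $J_\lambda(E_0)\le\liminf_nJ_\lambda(E_n)=c_\lambda^{cyl}$, and $E_0$ is a symmetric ground state. For b): by upper semicontinuity of $c^{cyl}$ and the definition of $\eps_\nu$ one has $c_{-\lambda_\nu^{cyl}+\eps_\nu}^{cyl}=c_0^{cyl}$, hence $c_\lambda^{cyl}\equiv c_0^{cyl}$ on $[-\lambda_\nu^{cyl}+\eps_\nu,-\lambda_{\nu-1}^{cyl}]$; and exactly as in the last part of Theorem~\ref{thm:main}~b), whenever $c_{\mu_1}^{cyl}=c_{\mu_2}^{cyl}$ with $\mu_1<\mu_2$ the level is unattained on $(\mu_1,\mu_2]$ (for $\lambda=0$ this is the classical non-attainment of the best constant on a bounded domain, cf. Remark~\ref{RemMainRes}). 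The same fact, together with the attainment just proved on $(-\lambda_\nu^{cyl},-\lambda_\nu^{cyl}+\eps_\nu)$, forces $\lambda\mapsto c_\lambda^{cyl}$ to be strictly increasing there.

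\emph{Continuity in c), and d).} Upper semicontinuity of $\lambda\mapsto c_\lambda^{cyl}$ comes from the reparametrisation maps applied to near-minimizers; left lower semicontinuity on $(-\lambda_\nu^{cyl},-\lambda_\nu^{cyl}+\eps_\nu]$ follows from the compactness statement below applied to ground states at parameters $\mu_n\uparrow\mu_0$, and together with $c_\lambda^{cyl}\to0$ at $-\lambda_\nu^{cyl}$ this proves c). For d), the lower bound $\tilde{m}(\lambda)$ on the number of pairs $\{E,-E\}$ is produced by the $\Z/2$-equivariant minimax of Section~\ref{sec:Nehari}, carried out on the sphere of $\cV^{cyl+}$ via the homeomorphism: below the level $c_0^{cyl}$ the relevant Palais-Smale condition holds (by the argument of a), every such Palais-Smale sequence has a nonzero weak limit), and each $\lambda_k^{cyl}$ with $-\lambda_k^{cyl}<\lambda<-\lambda_k^{cyl}+S\mu(\Om)^{\frac{2-p}{p}}$ produces, as in Theorem~\ref{thm:main}~a), a distinct minimax level in $(0,c_0^{cyl})$ corresponding to the jump of $\dim\tcV^{cyl}$, so their number bounds the number of critical pairs from below. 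Finally, for $\mu_n\to\mu_0$ in $I$ with $E_n$ a symmetric ground state of $J_{\mu_n}$: the sequence is bounded, $E_n\weakto E_\infty$, and $J_{\mu_0}'(E_\infty)=0$ by weak-to-weak$^*$ continuity and $\mu_n\to\mu_0$; here $E_\infty\ne0$, since $E_\infty=0$ would give $\liminf_nc_{\mu_n}^{cyl}\ge c_0^{cyl}$ by the rescaling argument of a), contradicting $c_{\mu_n}^{cyl}\to c_{\mu_0}^{cyl}<c_0^{cyl}$; hence $E_\infty$ is a symmetric ground state of $J_{\mu_0}$ (lower semicontinuity as in a)), and from $J_{\mu_n}(E_n)\to J_{\mu_0}(E_\infty)$, the compact $L^2$-embedding and the Nehari identity one gets $|E_n|_p^p\to|E_\infty|_p^p$ and $|\curlop E_n|_2^2\to|\curlop E_\infty|_2^2$, whence $E_n\to E_\infty$ in $W^p_0(\curl;\Om)$ by the Brezis-Lieb lemma; the compactness of the set of symmetric ground states is the case $\mu_n\equiv\mu_0$.

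\emph{Main obstacle.} The heart of the matter is the test-function estimate $c_\lambda^{cyl}<c_0^{cyl}$ for $-\lambda$ in a right-neighbourhood of $\lambda_\nu^{cyl}$ of controlled size $S\mu(\Om)^{\frac{2-p}{p}}$: one must combine an eigenfunction of $\lambda_\nu^{cyl}$ (lying in $\tcV^{cyl}$, with non-vanishing $L^2$-norm) with a near-minimizer of $c_0^{cyl}$ concentrated as in Remark~\ref{RemMainRes}, and optimise over the concentration scale; since pure concentration annihilates both the $L^2$-norm and the coupling, the two effects must be carefully balanced, and this is precisely where the critical exponent $p=6$ makes the analysis delicate. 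This balancing replaces the classical Brezis-Nirenberg comparison with the explicit $\R^3$ instantons---unavailable here, since even the existence of solutions of the limiting problem \eqref{limitingproblemR3} is open---by a comparison with the ground state level $c_0^{cyl}$ of the \emph{same} boundary value problem at $\lambda=0$. A further delicate point, on which the reparametrisation and comparison arguments above rest, is the bookkeeping of the finite-dimensional spaces $\tcV^{cyl}$ and their nesting in $\lambda$, together with the non-attainment-on-flat-intervals argument underlying the strict monotonicity.
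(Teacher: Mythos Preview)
Your direct argument for part a) is correct and actually more transparent than the paper's route through Theorem~\ref{ThLink2}~b): once $c_\lambda^{cyl}<c_0^{cyl}$, the rescaling $t_nE_n\in\cN_0^{cyl}$ with $t_n\to1$ immediately rules out a trivial weak limit, and weak-to-weak$^*$ continuity on $\cV$ does the rest. The paper instead verifies (C1)--(C3) for the pair $(J_\lambda,J_{cp}):=(J_\lambda,J_0)$ on $X^{cyl}$ with $X^0=\tcV^{cyl}$, $X^1=\{0\}$, and appeals to the abstract theorem.

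Your ``main obstacle'' paragraph, however, rests on a misconception. The estimate $c_\lambda^{cyl}<c_0^{cyl}$ for $0<\lambda+\lambda_\nu^{cyl}<S\mu(\Om)^{(2-p)/p}$ requires \emph{no} concentration and no balancing. The paper (Lemma~\ref{LemEstimateLevelsCyl}, following Lemma~\ref{LemEstimateLevels}) simply takes the eigenvector $e_\nu$ of $\lambda_\nu^{cyl}$ --- which lies in $\cV^{cyl+}$, \emph{not} in $\tcV^{cyl}$, since $\lambda_\nu^{cyl}>-\lambda$ --- sends it to $\cN_\lambda^{cyl}$ by the Nehari--Pankov map, and uses only H\"older together with the scalar inequality $\tfrac{A}{2}s^2-\tfrac1p s^p\le(\tfrac12-\tfrac1p)A^{p/(p-2)}$ to obtain
\[
c_\lambda^{cyl}\le\Big(\tfrac12-\tfrac1p\Big)(\lambda+\lambda_\nu^{cyl})^{\frac{p}{p-2}}\mu(\Om),
\qquad
c_0^{cyl}\ge\Big(\tfrac12-\tfrac1p\Big)S^{\frac{p}{p-2}}.
\]
Nothing here is sensitive to $p=6$; the critical exponent matters only for the \emph{failure} of compactness, not for this comparison.

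There is a genuine gap in d). The $\Z/2$-minimax (Theorem~\ref{ThLink2}~c)) needs the full $(PS)_\beta^\cT$-condition in $\cN_\lambda^{cyl}$ --- which on $X^{cyl}$ is norm convergence, since $\tcV^{cyl}$ is finite-dimensional --- not merely a nonzero weak limit. Your ``argument of a)'' gives only the latter. The paper closes this gap in Lemma~\ref{LemPScond}: for $\beta<(\tfrac12-\tfrac1p)S^{p/(p-2)}$, Brezis--Lieb combined with the inequality~\eqref{eq:LemB3checkeqCP} (applied to $J_0$ and $E_n-E_0$) forces $E_n\to E_0$ strongly. Your strong-convergence argument in the compactness step is specific to the ground-state level, where $J_{\mu_n}(E_n)\to J_{\mu_0}(E_0)$ directly yields $|E_n|_p\to|E_0|_p$; it does not cover arbitrary Palais--Smale levels. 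Once the PS condition is available, the genus-$\tilde m(\lambda)$ test set is the sphere in $\bigoplus_{k\in A(\lambda)}\cV(\lambda_k^{cyl})\subset\cV^{cyl+}$, with energy bounded by the same elementary estimate above; there is no ``jump of $\dim\tcV^{cyl}$'' involved, since $\lambda$ is fixed.

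A minor point: your continuity argument is written circularly (compactness of ground states uses $c_{\mu_n}^{cyl}\to c_{\mu_0}^{cyl}$, which you then deduce from compactness). It can be unwound using monotonicity and upper semicontinuity alone, but the paper sidesteps the issue by invoking Theorem~\ref{ThContinuityuOfClambda}, which proves continuity on each interval $(-\lambda_\nu^{cyl},-\lambda_{\nu-1}^{cyl}]$ without any attainment hypothesis.
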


Recall that a similar multiplicity result as in Theorem \ref{thm:main2} d) for the Brezis-Nirenberg problem \eqref{eq:BNproblem}  has been obtained by Cerami, Fortunato and Struwe \cite{CeramiAIHP1984}, \cite{Struwe}[Theorem 2.6].\\
\indent In order to deal with problem \eqref{eq:main} and prove Theorem \ref{thm:main} and Theorem \ref{thm:main2} we have to develop a new critical point theory in Section \ref{sec:Nehari}.  In the subcritical case, i.e. when  $\cV$ is compactly embedded into $L^p(\Om,\R^3)$, we know that $J'_\lambda$ is weak-to-weak$^*$ continuous on $\cN_\lambda$ (see Lemma \ref{eq:weaktoweakSubcritical} below), so that Theorem \eqref{eq:main} a) provides existence results for some $\lambda\leq 0$ and which have been obtained for all $\lambda\leq 0$ in \cite{BartschMederski1,BartschMederski2}. However, an additional information in the subcitical case in comparison to \cite{BartschMederski1,BartschMederski2}, is that
the map $(-\lambda_\nu,-\lambda_{\nu-1}]\ni\lambda\mapsto c_\lambda \in (0,+\infty)$ is strictly increasing and continuous and $c_{\lambda}\to 0$ as $\lambda\to -\lambda_\nu^{-}$, which follows from Theorem \ref{ThContinuityuOfClambda} and Lemma \ref{LemEstimateLevels} below.\\
\indent If $\lambda=0$, we do not know whether we get a nonexistence result for  \eqref{eq:main}  on star-shaped domains by means of a Pohozaev-type argument. In case of $\Om=\R^3$, a variant of the Pohozaev identity has been obtained in \cite{MederskiENZ}[Theorem 2.4].
We remark only that in our situation, the Palais-Smale condition cannot be satisfied at a positive level on star-shaped domains $\Om$ when $\lambda=0$ and $p=6$. Indeed, if $E\in \cN_0$, then taking $E_\eps$ given by the construction \eqref{eq:defOfE_eps} in the space $W^6_0(\curl;\Om)$, we easily check that $J'_0(E_\eps)\to0$ and $J_0(E_\eps)=J_0(E)>0$ as $\eps\to 0$. On the other hand, 
$E_\eps\weakto 0$ in $W_0^p(\curl;\Om)$, so that we cannot find a convergent subsequence.

\section{Critical point theory of ground states}\label{sec:Nehari}

Firstly we recall a general setting from \cite{BartschMederski1,BartschMederski2} which allows to work with the Nehari-Pankov manifold. Let $X$ be a reflexive Banach space with norm $\|\cdot\|$ and with a topological direct sum decomposition $X = X^+\oplus \tX$, where $X^+$ is a Hilbert space with a scalar product. For $u \in X$ we denote by $u^+ \in X^+$ and $\tu  \in \tX$ the corresponding summands so that $u = u^++\tu $. We may assume that $\langle u,u \rangle = \|u\|^2$ for any
$u\in X^+$ and that $\|u\|^2 = \|u^+\|^2+\|\tu \|^2$. The topology $\cT$ on $X$ is defined as the product of the norm topology in $X^+$ and the weak topology in $\tX$. Thus $u_n \cTto u$ is equivalent to $u_n^+ \to u^+$ and $\tu_n \weakto \tu $.\\
\indent Let $J \in \cC^1(X,\R)$ be a functional on $X$ of the form
\begin{equation}\label{EqJ}
J(u) = \frac12\|u^+\|^2-I(u) \quad\text{for $u=u^++\tu \in X^+\oplus \tX$}.
\end{equation}
We define the set
\begin{equation}\label{eq:NehariDef}
\cN := \{u\in X\setminus\tX: J'(u)|_{\R u\oplus \tX}=0\}
\end{equation}
and suppose the following assumptions hold:
\begin{itemize}
\item[(A1)] $I\in\cC^1(X,\R)$ and $I(u)\ge I(0)=0$ for any $u\in X$.
\item[(A2)] $I$ is $\cT$-sequentially lower semicontinuous:
    $u_n\cTto u\quad\Longrightarrow\quad \liminf I(u_n)\ge I(u)$
\item[(A3)] If $u_n\cTto u$ and $I(u_n)\to I(u)$ then $u_n\to u$.
\item[(A4)] There exists $r>0$ such that $a:=\inf\limits_{u\in X^+:\|u\|=r} J(u)>0$.
\item[(B1)] $\|u^+\|+I(u)\to\infty$ as $\|u\|\to\infty$.
\item[(B2)] $I(t_nu_n)/t_n^2\to\infty$ if $t_n\to\infty$ and $u_n^+\to u^+$ for some $u^+\neq 0$ as $n\to\infty$.
\item[(B3)] $\frac{t^2-1}{2}I'(u)(u) + tI'(u)(v) + I(u) - I(tu+v) < 0$ for every $u\in \cN$, $t\ge 0$, $v\in \tX$ such that $u\neq tu+v$.
\end{itemize}

\begin{Prop}[see \cite{BartschMederski1,BartschMederski2}]\label{prop:nehari}
For every 
$$u \in S(X^+) := \{u\in X^+:\|u\|=1\}$$ the functional $J$ constrained to
$\R u\oplus\tX = \{tu+v:t\ge0,\ v\in\tX\}$ has precisely two critical points $u_1,u_2$ with positive energy. These are of the form $u_1=t_1u+v_1$, $u_2=t_2u+v_2$ with $t_1>0>t_2$, $v_1,v_2\in\tX$. Moreover, $u_1$ is the unique global maximum of $J|_{\R^+u\oplus\tX}$, and $u_2$ is the unique global maximum of $J|_{\R^-u\oplus\tX}$, where $\R^+=[0,+\infty)$ and $\R^-=(-\infty,0]$. Moreover, $u_1$ and $u_2$ depend continuously on $u\in S(X^+)$.
\end{Prop}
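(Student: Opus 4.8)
The plan is to fix $u\in S(X^+)$ and study the function $\alpha\colon\R^+\times\tX\to\R$ given by $\alpha(t,v)=J(tu+v)=\tfrac12 t^2-I(tu+v)$, whose critical points in the open set $\{t>0\}$ are exactly the critical points of $J|_{\R^+u\oplus\tX}$. First I would show $\alpha$ has a global maximum on $\overline{\R^+}\times\tX$. On the boundary $t=0$ one has $\alpha(0,v)=-I(v)\le 0$ by (A1), while $\alpha(r,0)=J(ru)\ge a>0$ by (A4), so any maximizer has $t>0$; and $\alpha$ is $\cT$-upper semicontinuous in $(t,v)$ by (A2), so it suffices to confine a maximizing sequence to a bounded-then-$\cT$-compact set. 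Boundedness of $t_n$ follows from (B2) (if $t_n\to\infty$ with $u_n^+=t_n u$ fixed direction then $\alpha(t_n,v_n)/t_n^2\to-\infty$), and then boundedness of $\|v_n\|$ follows from (B1) applied to $w_n=t_nu+v_n$ together with $\|w_n^+\|=t_n$ bounded; passing to a $\cT$-convergent subsequence and using upper semicontinuity yields a maximizer $u_1=t_1u+v_1$ with $t_1>0$ and $J(u_1)\ge a>0$. Note $u_1\in\cN$ since it is an interior critical point of $J$ restricted to $\R u\oplus\tX$.

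Next I would prove uniqueness of the positive-energy critical point on $\R^+u\oplus\tX$, and simultaneously that $u_1$ is the \emph{only} critical point there with positive energy, by invoking (B3). Suppose $w=tu+v$ is any point of $\R^+u\oplus\tX$ with $w\neq u_1$. Writing $u_1=t_1u+v_1\in\cN$ and applying (B3) with the roles $u\rightsquigarrow u_1$, $t\rightsquigarrow t/t_1\ge 0$, $v\rightsquigarrow v-(t/t_1)v_1\in\tX$ (so that $tu+v=(t/t_1)u_1+\bigl(v-(t/t_1)v_1\bigr)$), one gets
\[
\frac{(t/t_1)^2-1}{2}I'(u_1)(u_1)+\frac{t}{t_1}I'(u_1)\bigl(v-\tfrac{t}{t_1}v_1\bigr)+I(u_1)-I(w)<0.
\]
Since $u_1\in\cN$ means $J'(u_1)|_{\R u_1\oplus\tX}=0$, i.e. $\|u_1^+\|^2=I'(u_1)(u_1)$ and $I'(u_1)|_{\tX}=0$, the left-hand side simplifies to $J(w)-J(u_1)$, giving $J(w)<J(u_1)$. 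Hence $u_1$ is the strict global maximum of $J|_{\R^+u\oplus\tX}$; in particular it is the unique critical point there with $J\ge a$, and no other critical point on that half-space can have positive energy, since a critical point distinct from $u_1$ has strictly smaller energy than $u_1$ — but wait, that only rules out positive-energy critical points once we know every critical point lies strictly below $u_1$; a critical point $w\ne u_1$ satisfies $J(w)<J(u_1)$ from the above, and one checks directly (e.g. $J(su)\to 0^-$ hmm — rather: any critical point $w$ of $J|_{\R^+u\oplus\tX}$ with $t=0$ has $J(w)=-I(v)\le 0$, and for $t>0$ being a critical point forces, via the same (B3) computation with $w$ now playing the role of the element of $\cN$, that $J(w)\ge J(u_1)$ is \emph{false} unless $w=u_1$ — so there are no interior critical points below $u_1$ either). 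This dichotomy shows the only critical point of $J|_{\R^+u\oplus\tX}$ with positive energy is $u_1$.

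Applying the same argument verbatim to $-u\in S(X^+)$ (the decomposition and all hypotheses are symmetric under $u\mapsto -u$) produces $u_2=t_2u+v_2$ with $t_2<0$, the unique global maximum of $J|_{\R^-u\oplus\tX}$ and the unique positive-energy critical point there. Since $\R^+u\oplus\tX$ and $\R^-u\oplus\tX$ overlap only in $\tX$, where $J=-I\le 0$ carries no positive-energy critical point, $u_1$ and $u_2$ are exactly the two critical points of $J|_{\R u\oplus\tX}$ with positive energy.

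Finally, for continuity of $u\mapsto u_1=:m(u)$ on $S(X^+)$, I would argue by sequences: let $u^{(n)}\to u$ in $S(X^+)$ and $m(u^{(n)})=t_n u^{(n)}+v_n$. Using $J(m(u^{(n)}))\ge J(r u^{(n)})=J(ru)+o(1)\ge a/2$ for large $n$ together with (B2) and (B1) as in the existence step, $(t_n)$ and $(v_n)$ are bounded; along a $\cT$-convergent subsequence $t_n\to t_*>0$ (positivity from $a>0$ and upper semicontinuity) and $v_n\weakto v_*$, so $m(u^{(n)})\cTto t_*u+v_*=:w_*$. One shows $w_*$ is a critical point of $J|_{\R u\oplus\tX}$ of positive energy — the hard point — and hence $w_*=m(u)$ by uniqueness; since the limit is the same for every subsequence, $m(u^{(n)})\to m(u)$ in the $\cT$-topology. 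To upgrade $\cT$-convergence to norm convergence one uses (A3): it suffices to show $I(m(u^{(n)}))\to I(m(u))$, which follows by sandwiching: $\limsup I(m(u^{(n)}))\le \liminf\bigl(\tfrac12 t_n^2-J(m(u^{(n)}))\bigr)$ and $J(m(u^{(n)}))\ge J(t_* u^{(n)}+v_*)\to J(m(u))$ by $\cT$-upper semicontinuity, combined with (A2) giving the reverse inequality $\liminf I(m(u^{(n)}))\ge I(w_*)$.

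\medskip
\textbf{Main obstacle.} I expect the delicate point to be verifying that the $\cT$-limit $w_*$ obtained in the continuity (and existence) arguments is genuinely a \emph{critical point} of $J|_{\R u\oplus\tX}$: $\cT$-convergence only gives weak convergence in the $\tX$-direction, and $I'$ need not be $\cT$-continuous, so one cannot simply pass to the limit in the equation $J'(m(u^{(n)}))|_{\R u^{(n)}\oplus\tX}=0$. The way around this is to bypass the derivative entirely and characterize $m(u)$ variationally: since $m(u)$ is the global maximizer of $J$ on $\R^+u\oplus\tX$, one shows $J(w_*)\ge J(tu+v)$ for all $t\ge0$, $v\in\tX$ by taking $w^{(n)}=t u^{(n)}+v$, using $J(m(u^{(n)}))\ge J(w^{(n)})$ and passing to the limit with $\cT$-upper semicontinuity of $J$ on the left and continuity of $t\mapsto J(tu^{(n)}+v)$ on the right; then $w_*$ is the global maximizer, hence equals $m(u)$, and the monotone convergence of $t_n,v_n$ follows. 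This "variational" route, rather than a derivative computation, is what makes the continuity statement work despite the absence of weak-to-weak$^*$ continuity of $J'$.
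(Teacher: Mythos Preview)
The paper does not actually prove this proposition; it is quoted from \cite{BartschMederski1,BartschMederski2} (cf.\ \cite{SzulkinWeth}), and your proposal reproduces the argument used there. The existence step (maximizing sequence bounded via (B2) then (B1), $\cT$-upper semicontinuity from (A2)) and the continuity step (variational characterization of the maximizer to avoid any weak continuity of $I'$, then (A3) to upgrade $\cT$-convergence to norm convergence) are both correct and are exactly the standard route.

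Your uniqueness paragraph is correct in substance but the exposition meanders. The clean statement is this: once you have shown via (B3) that $J(w)<J(u_1)$ for every $w\in\R^+u\oplus\tX$ with $w\neq u_1$, suppose $w$ is another critical point of $J|_{\R^+u\oplus\tX}$ with $w^+\neq 0$; then $w\in\cN$ as well, and the \emph{same} (B3) computation with $w$ in the role of the base point yields $J(u_1)<J(w)$, a contradiction. Hence $u_1$ is the unique critical point with $t>0$, and any critical point with $t=0$ has $J=-I\le 0$. That is all that is needed; the ``but wait'' and ``hmm'' detours can be deleted.
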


For $u\in S(X^+)$ we set $n(u):=u_1$ with $u_1$ from Proposition~\ref{prop:nehari}. Observe that $n(-u)=u_2$ and
\begin{equation}\label{eq:NehariCharact}
\cN = \{u\in X\setminus\tX: J'(u)|_{\R u\oplus\tX}=0\} = \{n(u): u\in S(X^+)\},
\end{equation}
in particular, $\cN$ is a topological manifold, the so-called {\em Nehari-Pankov manifold}. Note that all critical points of $J$ from $X\setminus \tX$ lie in $\cN$. One easily checks that if $I'(u)(u)>2I(u)$ for $u\neq 0$, then $\cN$ contains all nontrivial critical points.\\
\indent We say that the functional $J$ satisfies the {\em $(PS)_\beta^\cT$-condition in} $\cN$ if every $(PS)_\beta$-sequence $(u_n)_n$ for the unconstrained functional and such that $u_n\in\cN$ has a subsequence which converges in the $\cT$-topology:
\[
u_n \in \cN,\ J'(u_n) \to 0,\ J(u_n) \to \beta \qquad\Longrightarrow\qquad
u_n \cTto u\in X\ \text{ along a subsequence.}
\]
According to \cite{BartschMederski1} (cf. \cite{BartschMederski2}) we know that  the above conditions (A1)-(A4), (B1)-(B3) imply 
\begin{equation}\label{eq:groundstlevel}
c = \inf_\cN J\geq a>0.
\end{equation}
Moreover if $J$ is {\em coercive on $\cN$}, i.e. $J(u)\to \infty$ as $u\in\cN$ and $\|u\|\to\infty$, and satisfies the $(PS)_{c}^\cT$-condition in $\cN$,  then $c$ is achieved by a critical point of $J$,  and if $J$ is additionally even and  satisfies the $(PS)_{\beta}^\cT$-condition in $\cN$ for all $\beta\in\R$, then $J$ has an unbounded sequence of critical values.\\ 
\indent Since  the $(PS)_\beta^\cT$-condition in $\cN$ may be not satisfied for all $\beta$ or this condition could be difficult to check, we introduce a {\em compactly perturbed} problem with respect to another decomposition of $X$. Namely, suppose that 
$$\tX=X^0\oplus X^1,$$
where $X^0$, $X^1$ are closed in $\tX$, and $X^0$ is a Hilbert space. For $u\in\tX$ we denote $u^0\in X^0$ and $u^1\in X^1$ the corresponding summands so that $u=u^0+u^1$. We use the same notation for the scalar product in $X^+\oplus X^0$ and  $\langle u,u \rangle = \|u\|^2=\|u^+\|^2+\|u^0\|^2$ for any
$u=u^++u^0\in X^+\oplus X^0$, hence $X^+$ and $X^0$ are orthogonal.
We consider another functional $J_{cp}\in\cC^1(X,\R)$ of the form
\begin{equation}\label{EqJ2}
J_{cp}(u) = \frac12\|u^++u^0\|^2-I_{cp}(u) \quad\text{for $u=u^++u^0+u^1 \in X^+\oplus X^0\oplus X^1$.}
\end{equation}
We define the corresponding Nehari-Pankov manifold for $J_{cp}$
\[
\cN_{cp} := \{u\in X\setminus X^1: J_{cp}'(u)|_{\R u\oplus X^1}=0\},
\]
and we assume that $J_{cp}$ satisfies all corresponding assumptions  (A1)-(A4), (B1)-(B3), where we replace $X^+\oplus X^0$, $X^1$ and $I_{cp}$ instead of $X^+$, $\tX$ and $I$ respectively.
Moreover we enlist new additional conditions:
\begin{itemize}
%\item[(A7)] 
\item[(C1)] $J_{cp}(u_n)-J(u_n)\to 0$  if $(u_n)\subset \cN_{cp}$ is bounded and $(u_n^++u_n^0)\weakto 0$. Moreover there is $M> 0$ such that $J_{cp}(u)-J(u)\leq M \|u^++u^0\|^2$ for $u\in \cN_{cp}$.
%, i.e. 
%$$\inf_{u\in X^+\oplus X^1}J(u)-J_{cp}(u) >-\infty.$$
%\item[(A9)] 
\item[(C2)] $I(t_nu_n)/t_n^2\to\infty$ if $t_n\to\infty$ and $(I(tu_n^+))_n$ is bounded away from $0$ for any $t>1$.
\item[(C3)] $J'$ is weak-to-weak$^*$ continuous on $\cN$, i.e. if $(u_n)_n\subset\cN$, $u_n\weakto u$, then  $J'(u_n)\overset{\ast}{\weakto} J'(u)$ in $X^*$. Moreover $J$ is weakly sequentially lower semicontinuous on $\cN$, i.e. if $(u_n)_n\subset\cN$, $u_n\weakto u$ and $u\in\cN$, then $\liminf_{n\to\infty}J(u_n)\geq J(u)$.

%\item[(C3)]
%$\|u_n^+\|+I(t_nu_n)/t_n^2\to\infty$ if $\|t_nu_n\|\to\infty$ and $(\|u_n\|)_n$ is bounded away from $0$.
%If $I(t_nu_n)/t_n^2$ is bounded, $t_n\to\infty$ and $u_n\in\cN$ is bounded, then $u_n^+\to 0$.
%\item[(C3)] If $I(t_nu_n)/t_n^2$ is bounded and $t_n\to\infty$, then
%$I(tu_n)\to 0$ for some $t>1$.
%\item[(A10)]
%\item[(C3)]  Let $(t_nu_n)_n\subset \cN_{cp}$ where $t_n\to\infty$. If  $I'_0(t_nu_n)(u_n)/t_n$ is bounded, then $u_n^+\to 0$.
\end{itemize}
Observe that (C1) joins two functionals $J_{cp}$ and $J$ such that any bounded sequence of $\cN_{cp}$ with the weakly convergent part to $0$ in $X^+\oplus X^0$ is mapped by $J_{cp}-J$ in a compact set. Therefore, if (C1) holds, then we say that $J_{cp}$ is {\em compactly perturbed} with respect to $J$. It is easy to check that if $I(u)>0$ for $u\in X^+\setminus\{0\}$, then (C2) implies (B2).
Note that in (C3) we require an additional regularity for $J$.\\
%It is easy to see that (C3) implies (B1) and (B2).
%\\
%The functional $J$ is said to satisfy the {\em $(PS)_\beta^{weak}$-condition in} $\cN$ if every $(PS)_\beta$-sequence $(u_n)_n$ for the unconstrained functional and such that $u_n\in\cN$ has a subsequence which converges in the weak topology to a nontrivial element in $X$:
%\[
%u_n \in \cN,\ J'(u_n) \to 0,\ J(u_n) \to \beta \qquad\Longrightarrow\qquad
%u_n \weakto u\in X\setminus\{0\}\ \text{ along a subsequence.}
%\]
\indent If $J$ is coercive on $\cN$, then for any  $(PS)_c$-sequence $(u_n)_n\subset\cN$ 
one has $u_n \weakto u$ up to a subsequence, and the first main difficulty is to ensure that $u$ is nontrivial. The second one it to show that $u$ is a critical point. To this aim, and to analyse the multiplicity of critical points we present the main result of this section.

\begin{Th}\label{ThLink2}
Let $J \in \cC^1(X,\R)$ be coercive on $\cN$ and let $J_{cp} \in \cC^1(X,\R)$ be coercive on $\cN_{cp}$. Suppose that $J$ and $J_{cp}$ satisfy (A1)-(A4), (B1)-(B3) and set $c = \inf_\cN J$ and 
$d = \inf_{\cN_{cp}} J_{cp}$.  Then the following statements hold:

a)  If (C1)-(C2) hold and $\beta<d$, then any $(PS)_\beta$-sequence in $\cN$ contains a weakly convergent subsequence with a nontrivial limit point.

b) If (C1)-(C3) hold and $c<d$, then $c$ is achieved by a critical point (ground state) of $J$.

c) Suppose that $J$ is even and satisfies the $(PS)_\beta^\cT$-condition in $\cN$ for any $\beta<\beta_0$ for some fixed $\beta_0\in (c,+\infty]$. Let
\begin{equation}\label{eq:DefOfm(N,beta)}
m(\cN,\beta_0)=\sup\{\gamma(J^{-1}((0,\beta])\cap \cN):  \beta<\beta_0\}\in \N_0,
\end{equation}
where $\gamma$ stands for the Krasnoselskii genus for closed and symmetric subsets of $X$. 
 Then $J$ has at least $m(\cN,\beta_0)$ pairs of critical points $u$ and $-u$ such that $u\neq 0$ and $c\leq J(u)<\beta_0$.
\end{Th}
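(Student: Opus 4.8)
The plan is to treat the three parts separately, building on Proposition~\ref{prop:nehari} and the characterization $\cN=\{n(u):u\in S(X^+)\}$, and similarly $\cN_{cp}=\{n_{cp}(u):u\in S(X^+\oplus X^0)\}$, where $n_{cp}$ is the analog of $n$ for $J_{cp}$. For part a), let $(u_n)\subset\cN$ be a $(PS)_\beta$-sequence with $\beta<d$. Since $J$ is coercive on $\cN$, $(u_n)$ is bounded, so after passing to a subsequence $u_n\weakto u$, hence $u_n^+\weakto u^+$ and $u_n^0\weakto u^0$, and I may assume $u_n^++u_n^0\weakto w$ for some $w\in X^+\oplus X^0$. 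Suppose for contradiction that $u=0$; then in particular $w=0$, so $(u_n^++u_n^0)\weakto 0$, and by (C1) we get $J_{cp}(u_n)-J(u_n)\to 0$, so $J_{cp}(u_n)\to\beta$. The key computation is then to show $J_{cp}'(u_n)\to 0$ on $\R u_n\oplus X^1$ would place (a rescaling of) $u_n$ on $\cN_{cp}$ with energy tending to $\beta<d$, contradicting $d=\inf_{\cN_{cp}}J_{cp}$; more precisely, using (C2) and (B2) for $J_{cp}$ one argues that the map $n_{cp}$ applied to $(u_n^++u_n^0)/\|u_n^++u_n^0\|$ produces elements of $\cN_{cp}$ whose $J_{cp}$-energy is at most $J_{cp}(u_n)+o(1)$ (here one exploits that $J_{cp}(n_{cp}(v))$ is the maximum of $J_{cp}$ over $\R^+v\oplus X^1$, so it is controlled once one knows $u_n$ lies, up to the $X^1$-direction and scaling, near $\R^+ (u_n^++u_n^0)\oplus X^1$). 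This forces $\beta\geq d$, a contradiction; hence $u\neq0$.

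For part b), assume (C1)--(C3) and $c<d$. Take a minimizing sequence for $c$ on $\cN$; by Ekeland's variational principle applied on the manifold $\cN$ — which is homeomorphic to $S(X^+)$ via $n$, and on $S(X^+)$ Ekeland is available — one produces a $(PS)_c$-sequence $(u_n)\subset\cN$ with $J(u_n)\to c$, $J'(u_n)\to 0$. By part a) with $\beta=c<d$, after a subsequence $u_n\weakto u$ with $u\neq 0$. Now (C3) enters decisively: weak-to-weak$^*$ continuity of $J'$ on $\cN$ gives $J'(u)=0$, so $u$ is a nontrivial critical point, hence $u\in\cN$ and $J(u)\geq c$; weak sequential lower semicontinuity of $J$ on $\cN$ gives $J(u)\leq\liminf J(u_n)=c$. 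Therefore $J(u)=c$ and $u$ is a ground state.

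For part c), with $J$ even and the $(PS)_\beta^\cT$-condition holding in $\cN$ for all $\beta<\beta_0$, I would run a standard Krasnoselskii-genus minimax argument on the manifold $\cN$, transported to $S(X^+)$ via the odd homeomorphism $n$ (oddness because $n(-u)=u_2$ is the global maximum on $\R^-u\oplus\tX$ and $J$ is even). Define, for $1\le k\le m(\cN,\beta_0)$, the values $\beta_k=\inf_{A\in\Gamma_k}\sup_{u\in A}J(u)$ where $\Gamma_k$ is the family of compact symmetric subsets $A\subset\cN$ with $\gamma(A)\ge k$; by the definition of $m(\cN,\beta_0)$ one checks $c\le\beta_1\le\dots\le\beta_{m(\cN,\beta_0)}<\beta_0$. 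The $(PS)^\cT$-condition in $\cN$, together with coercivity and a $\cT$-continuous, odd, genus-nonincreasing deformation of sublevel sets of $J|_\cN$ (constructed from a pseudo-gradient flow in $X^+$ pulled back through $n$, exactly as in \cite{BartschMederski1}), yields that each $\beta_k$ is a critical value and that equal consecutive values force infinitely many critical points, producing at least $m(\cN,\beta_0)$ pairs $\pm u$ with $c\le J(u)<\beta_0$.

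The main obstacle is part a): ruling out $u=0$ requires carefully converting a $(PS)_\beta$-sequence for the \emph{unconstrained} $J$ lying on $\cN$ into a competitor on $\cN_{cp}$ whose $J_{cp}$-energy is essentially no larger than $\beta$, and this is where (C1), (C2), and the maximum characterization of $n_{cp}$ from Proposition~\ref{prop:nehari} must be combined delicately — one must control the $X^1$-component and the scaling parameter $t_n$ simultaneously, using (C2) to prevent $t_n\to\infty$ from inflating the energy and using the first part of (C1) to discard the $J_{cp}-J$ discrepancy in the limit. The remaining parts are comparatively routine once the abstract Nehari-manifold machinery of \cite{BartschMederski1,BartschMederski2} (homeomorphism with $S(X^+)$, coercivity, deformation lemma) is invoked.
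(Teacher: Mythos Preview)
Your treatment of parts b) and c) is essentially correct and matches the paper's approach: Ekeland (or the $\cC^1$-structure of $J\circ n$ on $S(X^+)$) produces the $(PS)_c$-sequence, (C3) closes the argument in b), and the standard Lusternik--Schnirelmann scheme transported through the odd homeomorphism $n$ handles c).

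The gap is in part a), and it is a genuine one. Two points:

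\emph{First}, you apply (C1) directly to the sequence $(u_n)\subset\cN$ to conclude $J_{cp}(u_n)-J(u_n)\to 0$. But (C1) is stated only for bounded sequences in $\cN_{cp}$, not in $\cN$; as an abstract hypothesis you cannot invoke it on $\cN$.

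\emph{Second}, and more fundamental, your key inequality ``$J_{cp}\big(n_{cp}((u_n^++u_n^0)/\|u_n^++u_n^0\|)\big)\leq J_{cp}(u_n)+o(1)$'' goes the wrong way. By the very maximum characterization you cite, $n_{cp}(v)$ is the \emph{maximum} of $J_{cp}$ over $\R^+v\oplus X^1$, and since $u_n\in\R^+(u_n^++u_n^0)\oplus X^1$, you get $J_{cp}(n_{cp}(\cdot))\geq J_{cp}(u_n)$, not $\leq$. With the inequality in this direction, no contradiction with $\beta<d$ follows.

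The correct order of operations (as in the paper) is the reverse: first use the maximum property of $u_n=n(u_n^+/\|u_n^+\|)$ for the functional $J$ on $\R^+u_n\oplus\tX$. Since $n_{cp}(u_n^+/\|u_n^+\|)=t_n u_n^+ + t_n v_n$ with $v_n\in X^1\subset\tX$ lies in $\R^+u_n\oplus\tX$, this yields
\[
\beta+o(1)=J(u_n)\ \geq\ J\big(n_{cp}(u_n^+/\|u_n^+\|)\big).
\]
Only now does (C1) enter, applied to the bounded sequence $n_{cp}(u_n^+/\|u_n^+\|)\in\cN_{cp}$, whose $(X^+\oplus X^0)$-part equals $t_n u_n^+\weakto 0$ once $t_n$ is shown bounded. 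This gives $J(n_{cp}(\cdot))=J_{cp}(n_{cp}(\cdot))+o(1)\geq d+o(1)$, hence $\beta\geq d$, the desired contradiction. The boundedness of $t_n$ is itself nontrivial: if $t_n\to\infty$, one combines (B3) for $J_{cp}$ with the \emph{second} clause of (C1) (the uniform bound $J_{cp}-J\leq M\|u^++u^0\|^2$) to pass from $I_{cp}$ to $I$, then invokes (C2) to force $I(tu_n^+)\to 0$ for some $t>1$, and finally (B3) for $J$ gives $u_n^+\to 0$, contradicting $\beta>0$. Your sketch mentions (C2) and (B2) here, but the actual mechanism requires (B3) and the $M$-bound from (C1).
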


\begin{proof}
Similarly as in Proposition \ref{prop:nehari} we define a homeomorphism $n_{cp}:S(X^+\oplus X^0)\to\cN_{cp}$, where $S(X^+\oplus X^0)$ stands for the unit sphere in $X^+\oplus X^0$.
As in \cite{BartschMederski1,SzulkinWeth}  one proves that
\begin{itemize}
\item[(i)] $n:S(X^+)\to\cN$, $n_{cp}:S(X^+\oplus X^0)\to\cN_{cp}$ are homeomorphisms with inverses $\cN \to S(X^+)$, $\cN_{cp} \to S(X^+\oplus X^0)$ given by $u \mapsto u^+/\|u^+\|$, $u \mapsto (u^++u^0)/\|u^++u^0\|$ respectively.

\item[(ii)] $J\circ n:S(X^+)\to\R$ %, $J_{cp}\circ n_{cp}:S(X^+\oplus X^0)\to\R$  
 is $\cC^1$.
\item[(iii)] $(J\circ n)'(u) = \|n(u)^+\|\cdot J'(u)|_{T_uS(X^+)}:T_uS(X^+)\to\R$ 
%(resp. $(J_{cp}\circ n_{cp})'(u) = \|n_{cp}(u)^+\|\cdot J_{cp}'(u)|_{T_uS(X^+\oplus X^0)}:T_uS(X^+\oplus X^0)\to\R$) 
for every $u\in S(X^+)$.
\item[(iv)] $(u_n)_n\subset S(X^+)$ is a Palais-Smale sequence for $J\circ n$ if, and only if, $(n(u_n))_n$ is a Palais-Smale sequence for $J$ in $\cN$. 
\item[(v)] $u\in S(X^+)$ is a critical point of $J\circ n$ if, and only if, $n(u)$ is a critical point of $J$.
\item[(vi)] If $J$  is even, then so is $J\circ n$.
\end{itemize}

a) 
Let $(n(u_n))_n$ be a $(PS)_{\beta}$-sequence for $J$ in $\cN$, $\beta<d$. Observe that by \eqref{eq:groundstlevel} we have $\beta\geq c>0$. Since $X$ is reflexive and $J$ is coercive on $\cN$, then we may assume that $n(u_n)\weakto u$ for some $u\in X$. Suppose that $u=0$. Since $n_{cp}(n(u_n)^+/\|n(u_n)^+\|)$  is  the unique global maximum of $J_{cp}|_{\R^+n(u_n)^+\oplus X^1}$ we find $t_n>0$ and $v_n\in X^1$ such that
$$n_{cp}(n(u_n)^+/\|n(u_n)^+\|)=t_n (n(u_n)^++v_n)\in\cN_{cp}.$$
Suppose that $t_n\to\infty$. In view of (B3) applied to $J_{cp}$ with $t=0$, $v=0$ and $u=t_n(n(u_n)^+ + v_n)$, one obtains
\begin{eqnarray*}
\|n(u_n)^+\|^2= \frac{I_{cp}'(t_n(n(u_n)^+ + v_n))(t_n(n(u_n)^+ + v_n))}{t_n^2} \geq 
\frac{2I_{cp}(t_n(n(u_n)^+ + v_n))}{t_n^2}. 
\end{eqnarray*}
Since (C1) holds, we have
\begin{eqnarray*}
I(t_n(n(u_n)^+ + v_n))-I_{cp}(t_n(n(u_n)^+ + v_n))&=&
J_{cp}(t_n(n(u_n)^+ + v_n))-J(t_n(n(u_n)^+ + v_n))\\
&\leq &Mt_n^2\|n(u_n)^+\|^2,
\end{eqnarray*}
and we get 
\begin{eqnarray*}
(1+2M)\|n(u_n)^+\|^2\geq 
\frac{2I(t_n(n(u_n)^+ + v_n))}{t_n^2}.
\end{eqnarray*}
Then, by (C2) there is $t>1$ such that
\begin{equation}\label{eq:eqlimI1}
I(tn(u_n)^+)\to 0
\end{equation}
as $n\to\infty$. Since $n(u_n)\in\cN$, and in view of (B3), we get
$$\frac{t^2-1}{2}\|n(u_n)^+\|^2=\frac{t^2-1}{2}I'(n(u_n))(n(u_n))\leq I(tn(u_n)^+)-I(n(u_n)).$$
Then, by (A1) and \eqref{eq:eqlimI1} we infer that
$$n(u_n)^+\to 0.$$
Observe that 
$$0<\beta=\lim_{n\to\infty}J(n(u_n))=\lim_{n\to\infty}\Big(\frac{1}{2}\|n(u_n)^+\|^2-I(n(u_n))\Big)\leq \lim_{n\to\infty}\frac{1}{2}\|n(u_n)^+\|^2=0$$
and we get a contradiction. Therefore $t_n$ is bounded 
and passing to a subsequence 
\begin{eqnarray*}
n_{cp}(n(u_n)^+/\|n(u_n)^+\|)^++n_{cp}(n(u_n)^+/\|n(u_n)^+\|)^0&=&n_{cp}(n(u_n)^+/\|n(u_n)^+\|)^+\\
&=&t_nn(u_n)^+\weakto 0\hbox{ in }X.
\end{eqnarray*}
Since 
$$J_{cp}(n_{cp}(n(u_n)^+/\|n(u_n)^+\|))\leq \frac{1}{2} \|t_n n(u_n)^+\|^2,$$
then $(n_{cp}(n(u_n)^+/\|n(u_n)^+\|))_n$ is bounded and by (C1) we get
\begin{eqnarray*}
\beta+o(1)&=&J(n(u_n))\\
&\geq& J(n_{cp}(n(u_n)^+/\|n(u_n)^+\|))\\
%&=& J_{cp}(n_{cp}(n(u_n)^+/\|n(u_n)^+\|)) + o(1)\\
&\geq& d + o(1),
\end{eqnarray*}
which gives a contradiction with $\beta<d$. Thus $u\neq 0$. 

b) The existence of a $(PS)_{c}$-sequence $(n(u_n))_n$ for $J$ in $\cN$ follows from (ii) and (iv) because 
$$c = \inf_{S(X^+)} J\circ n.$$
In view of a) we get $n(u_n)\weakto u\neq 0$ and by (C3)  we have $J'(u)=0$. Thus $u\in\cN$ and
$$c=\liminf_{n\to\infty}J(n(u_n))\geq J(u)\geq c.$$

c) Let $K\subset S(X^+)$ be the set of all critical points of $\Phi:=J\circ n:S(X^+)\to \R$ and let $m(\cN,\beta_0)\geq 1$.
If $K$ is infinite, then by (v) we have that $n(K)$ is an infinite set of critical points of $J$ and we conclude. Suppose that $K$ is finite.
If $(u_n)\subset S(X^+)$ is a Palais-Smale sequence of $\Phi$, i.e. $\Phi(u_n)\to \beta <\beta_0$ and $\Phi'(u_n)\to 0$, then $(n(u_n))$ is a $(PS)_{\beta}$-sequence for $J$ and 
$n(u_n)\cTto u_0$ for some $u_0\in\cN$ along a subsequence. Hence $u_n\to n^{-1}(u_0)\in S(X^+)$. Therefore $\Phi$ satisfies the usual Palais-Smale condition and by means of the standard methods (e.g. \cite{Rabinowitz:1986}[Theorem 8.10]) we show that the Lusternik-Schnirelmann values
\begin{eqnarray*}
%\Sigma_k&=&\{A\in\Sigma:  \gamma(A)\geq k\}\\
\beta_k&:=& \inf\{\beta\in\R:  \gamma(\Phi^{\beta})\geq k\}.
\end{eqnarray*}
are critical values and
satisfy
$$\beta_1<\beta_2<...<\beta_{m(\cN,\beta_0)}.$$
\end{proof}

\begin{Cor}
Suppose that the assumptions of Theorem \ref{ThLink2} are satisfied and (C1)-(C3) hold. If $X^0=\{0\}$, $d$ is achieved by a critical point and $J(u)<J_{cp}(u)$ for any $u\in \cN$, then $c<d$ and $c$ is achieved by a critical point.
\end{Cor}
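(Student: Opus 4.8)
The plan is to exploit that, because $X^0=\{0\}$, the quadratic parts of $J$ and $J_{cp}$ coincide and both Nehari--Pankov manifolds are modelled on the \emph{same} unit sphere $S(X^+)$, via the homeomorphism $n\colon S(X^+)\to\cN$ introduced after Proposition~\ref{prop:nehari} and the homeomorphism $n_{cp}\colon S(X^+\oplus X^0)\to\cN_{cp}$ from the proof of Theorem~\ref{ThLink2} (note $S(X^+\oplus X^0)=S(X^+)$ here). The idea is then to transport the given minimiser of $J_{cp}$ to a competitor for $c$ on $\cN$ lying over the same point of the sphere, and compare levels. Once $c<d$ is established, part b) of Theorem~\ref{ThLink2} does the rest.

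First I would fix a critical point $u_d$ of $J_{cp}$ with $J_{cp}(u_d)=d$, which exists by hypothesis. Since $\tX=X^1$ and, by (A1) for $I_{cp}$, one has $J_{cp}\le 0$ on $X^1$, while $d>0$ by the analogue of \eqref{eq:groundstlevel} for $J_{cp}$, the point $u_d$ does not lie in $X^1$; hence $u_d\in\cN_{cp}$ and its $X^+$-component $u_d^+$ is nonzero. Set $w_d:=u_d^+/\|u_d^+\|\in S(X^+)$. The one step requiring care is the identification $u_d=n_{cp}(w_d)$: since $J_{cp}'(u_d)=0$, the point $u_d$ is a critical point of $J_{cp}$ restricted to the ray $\R^+ w_d\oplus X^1$, lies on its positive half ($t$-component $\|u_d^+\|>0$) and has positive energy $d$, so by the uniqueness assertion of Proposition~\ref{prop:nehari} applied to $J_{cp}$ (with $X^+,X^1,I_{cp}$ in the roles of $X^+,\tX,I$) it must coincide with the Nehari point $n_{cp}(w_d)$, which is the global maximum of $J_{cp}$ on $\R^+ w_d\oplus X^1$. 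In particular $J_{cp}(n_{cp}(w_d))=d$.

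Next I would consider $n(w_d)\in\cN$. By Proposition~\ref{prop:nehari} the point $n(w_d)$ lies on the ray $\R^+ w_d\oplus\tX=\R^+ w_d\oplus X^1$, and so, using the hypothesis $J(u)<J_{cp}(u)$ for $u\in\cN$ together with the fact that $n_{cp}(w_d)$ maximises $J_{cp}$ on that ray,
\[
c=\inf_{\cN}J\ \le\ J(n(w_d))\ <\ J_{cp}(n(w_d))\ \le\ J_{cp}(n_{cp}(w_d))=d .
\]
Hence $c<d$. Since, moreover, (C1)--(C3) hold, $J$ is coercive on $\cN$ and $J_{cp}$ is coercive on $\cN_{cp}$, part b) of Theorem~\ref{ThLink2} applies and yields that $c$ is attained by a critical point of $J$.

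The main obstacle is precisely the identification $u_d=n_{cp}(w_d)$ in the second step: without the uniqueness of the positive-energy critical point of $J_{cp}$ on each ray one could only conclude $J_{cp}(n_{cp}(w_d))\ge d$, which is the wrong direction and would make the sandwich estimate collapse. Everything else — the two-line chain of inequalities above and the appeal to the main theorem — is routine.
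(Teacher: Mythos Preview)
Your proof is correct and follows essentially the same route as the paper's: pick a critical point $u_0$ of $J_{cp}$ with $J_{cp}(u_0)=d$, project to the sphere $S(X^+)$, observe that $n(u_0)\in\R^+u_0\oplus X^1$, and use the chain $c\le J(n(u_0))<J_{cp}(n(u_0))\le J_{cp}(u_0)=d$ together with Theorem~\ref{ThLink2}~b). You supply more detail than the paper (in particular the justification that $u_d\notin X^1$ and the identification $u_d=n_{cp}(w_d)$), but the argument is the same. One small remark on your ``main obstacle'': you do not actually need the full identification $u_d=n_{cp}(w_d)$; since $J_{cp}'(u_d)=0$, condition (B3) for $J_{cp}$ (equivalently Lemma~\ref{eq:LemB3check} type inequality) gives directly $J_{cp}(u_d)\ge J_{cp}(tu_d+v)$ for all $t\ge0$, $v\in X^1$, and hence $J_{cp}(n(w_d))\le J_{cp}(u_d)=d$ without invoking uniqueness on the ray.
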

\begin{proof}
Let $J'_{cp}(u_0)=0$ and $J_{cp}(u_0)=d$. Observe that $n(u_0)\in \R^+u_0\oplus \tX=\R^+u_0\oplus X^1$ and
$$c\leq J(n(u_0))<J_{cp}(n(u_0))\leq J_{cp}(u_0)=d.$$
By Theorem \ref{ThLink2} b) we conclude.
\end{proof}

\begin{Th}\label{ThContinuityuOfClambda}
Suppose that there is a family of functionals $J_\lambda\in\cC^1(X,\R)$ of the form
$$J_\lambda(u) = \frac12\|u^+\|^2-I_\lambda(u) \quad\text{for $u=u^++\tu \in X^+\oplus \tX$},$$
where $J_\lambda$ and $I_\lambda$ satisfy assumptions (A1)-(A4), (B1)-(B3) and (C2) for $\lambda\in I\subset \R$, $\cN_\lambda$ is given by \eqref{eq:NehariDef} and $J_\lambda$ is coercive on $\cN_\lambda$. Suppose that there is $L>0$ such that
\begin{equation}\label{eq:thastract2eq1}
0< J_{\lambda_1}(u)-J_{\lambda_2}(u)\leq L(\lambda_1-\lambda_2)\|u\|^2
\hbox{ for }u\in X\setminus\tX\hbox{ and } \lambda_1>\lambda_2,\; \lambda_1,\lambda_2\in I.
\end{equation}
Then the map $I\ni\lambda\mapsto c_{\lambda}\in (0,+\infty)$ is non-decreasing and continuous. 
Moreover, if $c_\lambda:=\inf_{\cN_\lambda}J_\lambda$ is attained for every $\lambda\in I$, then the map $I\ni\lambda\mapsto c_{\lambda}\in (0,+\infty)$ is  strictly increasing. 
\end{Th}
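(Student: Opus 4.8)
The first claim is that $\lambda\mapsto c_\lambda$ is non-decreasing. This should follow directly from the monotonicity hypothesis \eqref{eq:thastract2eq1} together with the Nehari-Pankov characterization. Fix $\lambda_1>\lambda_2$ in $I$. Given any $u\in S(X^+)$, let $n_{\lambda_1}(u)\in\cN_{\lambda_1}$ be the point from Proposition~\ref{prop:nehari} applied to $J_{\lambda_1}$. Since $n_{\lambda_1}(u)\in\R^+u\oplus\tX\setminus\tX$, the left inequality in \eqref{eq:thastract2eq1} gives $J_{\lambda_2}(n_{\lambda_1}(u))<J_{\lambda_1}(n_{\lambda_1}(u))$. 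But $n_{\lambda_1}(u)\in\R^+u\oplus\tX$ and $n_{\lambda_2}(u)$ is the \emph{global maximum} of $J_{\lambda_2}$ on $\R^+u\oplus\tX$ (again Proposition~\ref{prop:nehari}), so $c_{\lambda_2}\le J_{\lambda_2}(n_{\lambda_2}(u))$ does not immediately help — the cleaner route is: $c_{\lambda_2}\le J_{\lambda_2}(n_{\lambda_1}(u)) < J_{\lambda_1}(n_{\lambda_1}(u))$, wait, this only bounds $c_{\lambda_2}$ from above by something involving $\lambda_1$. Taking the infimum over $u\in S(X^+)$ on the right and using $c_{\lambda_1}=\inf_{u\in S(X^+)}J_{\lambda_1}(n_{\lambda_1}(u))$ yields $c_{\lambda_2}\le c_{\lambda_1}$, i.e. monotonicity; in fact it shows the sharper $c_{\lambda_2}<c_{\lambda_1}$ \emph{provided} $c_{\lambda_1}$ is attained (see below), since then one may take $u$ realizing the infimum.

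Next, continuity. Fix $\lambda_0\in I$ and take $\lambda_n\to\lambda_0$. Monotonicity already gives one-sided limits; it remains to rule out jumps. The upper bound is easy: for $\lambda_n>\lambda_0$, pick $u_n$ nearly realizing $c_{\lambda_n}$, so $n_{\lambda_n}(u_n)\in\cN_{\lambda_n}$; then $J_{\lambda_0}(n_{\lambda_n}(u_n))\le J_{\lambda_n}(n_{\lambda_n}(u_n))=c_{\lambda_n}+o(1)$, but the left side is at least $c_{\lambda_0}$ only after reprojecting — instead use that $\R^+ u_n\oplus\tX$ contains the $J_{\lambda_0}$-maximizer $n_{\lambda_0}(u_n)$, whose value is $\ge c_{\lambda_0}$, and $J_{\lambda_0}\le J_{\lambda_n}$ pointwise on $X\setminus\tX$, giving $c_{\lambda_0}\le J_{\lambda_0}(n_{\lambda_0}(u_n))\le J_{\lambda_n}(n_{\lambda_0}(u_n))\le J_{\lambda_n}(n_{\lambda_n}(u_n))$, hence $c_{\lambda_0}\le \liminf c_{\lambda_n}$. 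For the reverse inequality one needs uniform control: from \eqref{eq:thastract2eq1}, $J_{\lambda_n}(u)-J_{\lambda_0}(u)\le L|\lambda_n-\lambda_0|\,\|u\|^2$ on $\cN_{\lambda_0}$, so $c_{\lambda_n}\le J_{\lambda_n}(n_{\lambda_n}(v))\le J_{\lambda_n}(v)\le J_{\lambda_0}(v)+L|\lambda_n-\lambda_0|\,\|v\|^2$ for any $v\in\cN_{\lambda_0}$ (using that $n_{\lambda_n}$ of $v^+/\|v^+\|$ maximizes $J_{\lambda_n}$ over $\R^+v^+\oplus\tX\ni v$). Taking $v$ a near-minimizer of $J_{\lambda_0}$ on $\cN_{\lambda_0}$ with $\|v\|$ controlled gives $c_{\lambda_n}\le c_{\lambda_0}+C|\lambda_n-\lambda_0|+o(1)$, hence $\limsup c_{\lambda_n}\le c_{\lambda_0}$. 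The one subtlety — and the \textbf{main obstacle} — is the boundedness of the chosen near-minimizer $v$: this is where coercivity of $J_{\lambda_0}$ on $\cN_{\lambda_0}$ enters, guaranteeing that a minimizing sequence for $c_{\lambda_0}$ stays in a fixed ball, so $\|v\|^2\le R$ uniformly and the error term is genuinely $O(|\lambda_n-\lambda_0|)$.

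Finally, strict monotonicity when $c_\lambda$ is attained for every $\lambda\in I$. Fix $\lambda_1>\lambda_2$ and let $E\in\cN_{\lambda_1}$ with $J_{\lambda_1}(E)=c_{\lambda_1}$. Write $u=E^+/\|E^+\|\in S(X^+)$, so $E=n_{\lambda_1}(u)\in\R^+u\oplus\tX$. The point $n_{\lambda_2}(u)$ is the global maximum of $J_{\lambda_2}$ on $\R^+u\oplus\tX$, so $c_{\lambda_2}\le J_{\lambda_2}(n_{\lambda_2}(u))$ is the wrong direction; instead, since $E\in\R^+u\oplus\tX\setminus\tX$, the strict left inequality of \eqref{eq:thastract2eq1} gives $J_{\lambda_2}(E)<J_{\lambda_1}(E)=c_{\lambda_1}$, and $c_{\lambda_2}=\inf_{\cN_{\lambda_2}}J_{\lambda_2}\le J_{\lambda_2}(n_{\lambda_2}(u))\le$ — again wrong direction. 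The correct chain: $c_{\lambda_2}\le \sup_{\R^+u\oplus\tX}J_{\lambda_2}=J_{\lambda_2}(n_{\lambda_2}(u))$, but also $J_{\lambda_2}(n_{\lambda_2}(u))$ — hmm. Let me redo: one wants an upper bound on $c_{\lambda_2}$ strictly below $c_{\lambda_1}$. Take instead $E$ attaining $c_{\lambda_2}$, $u=E^+/\|E^+\|$; then $c_{\lambda_1}\le J_{\lambda_1}(n_{\lambda_1}(u))$, and I need a lower bound... Rather: take $E$ attaining $c_{\lambda_1}$ and project to $\cN_{\lambda_2}$ via $n_{\lambda_2}(u)$ with $u=E^+/\|E^+\|$; then $c_{\lambda_2}\le J_{\lambda_2}(n_{\lambda_2}(u))$, and since $n_{\lambda_2}(u)$ maximizes $J_{\lambda_2}$ over $\R^+u\oplus\tX\ni E$, we get $J_{\lambda_2}(n_{\lambda_2}(u))\ge J_{\lambda_2}(E)$ — still wrong direction. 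The resolution is that $n_{\lambda_2}(u)$ maximizes $J_{\lambda_2}$, so $J_{\lambda_2}(n_{\lambda_2}(u))\ge J_{\lambda_2}(E)$, which means this approach bounds $c_{\lambda_2}$ from \emph{above} by the max, not helpfully. The honest argument: $c_{\lambda_2}\le J_{\lambda_2}(n_{\lambda_2}(u))$ where now we must compare $J_{\lambda_2}(n_{\lambda_2}(u))$ with $c_{\lambda_1}=J_{\lambda_1}(E)\ge J_{\lambda_1}(n_{\lambda_1}(u))=\sup_{\R^+u\oplus\tX}J_{\lambda_1}\ge J_{\lambda_1}(n_{\lambda_2}(u))>J_{\lambda_2}(n_{\lambda_2}(u))$, the last step by the strict inequality in \eqref{eq:thastract2eq1} applied to $n_{\lambda_2}(u)\in X\setminus\tX$. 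Chaining: $c_{\lambda_2}\le J_{\lambda_2}(n_{\lambda_2}(u))<J_{\lambda_1}(n_{\lambda_2}(u))\le\sup_{\R^+u\oplus\tX}J_{\lambda_1}=J_{\lambda_1}(n_{\lambda_1}(u))\le J_{\lambda_1}(E)=c_{\lambda_1}$, so $c_{\lambda_2}<c_{\lambda_1}$, as desired. I would write up precisely this last chain, taking care that $n_{\lambda_j}$ denotes the map from Proposition~\ref{prop:nehari} for $J_{\lambda_j}$ and that the supremum over $\R^+u\oplus\tX$ is attained at $n_{\lambda_j}(u)$.
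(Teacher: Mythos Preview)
Your arguments for monotonicity and (once you arrived at the final chain) for strict monotonicity are correct and essentially match the paper's.

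The continuity argument, however, has a genuine gap. In your chain
\[
c_{\lambda_n}\le J_{\lambda_n}(n_{\lambda_n}(v))\le J_{\lambda_n}(v)\le J_{\lambda_0}(v)+L|\lambda_n-\lambda_0|\,\|v\|^2
\]
the second inequality is \emph{backwards}: by Proposition~\ref{prop:nehari}, $n_{\lambda_n}(v^+/\|v^+\|)$ is the global \emph{maximum} of $J_{\lambda_n}$ on $\R^+v^+\oplus\tX$, so $J_{\lambda_n}(n_{\lambda_n}(v))\ge J_{\lambda_n}(v)$, not $\le$. The corrected chain reads
\[
c_{\lambda_n}\le J_{\lambda_n}(n_{\lambda_n}(v))\le J_{\lambda_0}(n_{\lambda_n}(v))+L|\lambda_n-\lambda_0|\,\|n_{\lambda_n}(v)\|^2\le J_{\lambda_0}(v)+L|\lambda_n-\lambda_0|\,\|n_{\lambda_n}(v)\|^2,
\]
and now the error term involves $\|n_{\lambda_n}(v)\|^2$, not $\|v\|^2$. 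Coercivity of $J_{\lambda_0}$ on $\cN_{\lambda_0}$ bounds $\|v\|$ but says nothing about the reprojected points $n_{\lambda_n}(v)\in\cN_{\lambda_n}$, which live on different manifolds as $n$ varies. The paper handles this by writing $n_{\lambda_n}(v)=t_n(v^++w_n)$, using (B3) together with (B2) (applied to $I_\nu$ with $\nu=\sup_n\lambda_n$) to rule out $t_n\to\infty$, and then (B1) to conclude that $\|n_{\lambda_n}(v)\|$ is bounded.

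The same issue arises, more seriously, for the inequality $c_{\lambda_0}\le\liminf c_{\lambda_n}$. Your displayed chain for it uses $J_{\lambda_0}\le J_{\lambda_n}$ pointwise, which holds only for $\lambda_n>\lambda_0$ and is then already a consequence of monotonicity. For $\lambda_n<\lambda_0$ one must instead insert the Lipschitz bound at the point $n_{\lambda_0}(u_n)$, and then one needs $\|n_{\lambda_0}(u_n)\|$ bounded, where the $u_n\in\cN_{\lambda_n}$ are near-minimizers with $u_n^+$ \emph{varying} in $n$. This is precisely where (C2) enters in the paper: (B2) alone is not enough because there is no fixed nonzero limit for $u_n^+$, and the argument via (B3) and (C2) is needed to exclude $t_n\to\infty$. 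Your plan never invokes (B2) or (C2); without them the continuity proof does not go through.
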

\begin{proof}
Observe that for $\lambda_1>\lambda_2$ and for any $u\in \cN_{\lambda_1}$ we have
\begin{equation}\label{eq:Abstractest1}
 J_{\lambda_1}(u) \geq J_{\lambda_1}(n_{\lambda_2}(u))
>J_{\lambda_2}(n_{\lambda_2}(u))\geq c_{\lambda_2},
\end{equation}
where  $n_\lambda:S(X^+)\to\cN_\lambda$ for $\lambda\in I$ denotes the homeomorhism defined after Proposition \ref{prop:nehari}. Hence $c_{\lambda_1}\geq c_{\lambda_2}$
and  the map $I\ni\lambda\mapsto c_{\lambda}\in (0,+\infty)$ is non-decreasing. 
Now suppose that $\lambda_n\in I$
for $n\geq 0$ and let $\lambda_n\to\lambda_0$ in $I$. Take $u_n\in\cN_{\lambda_n}$ such that 
$$c_{\lambda_n}+\frac{1}{n}\geq J_{\lambda_n}(u_n).$$ 
Let $\mu=\inf_{n\geq 0}\{\lambda_n\}$  and $\nu=\sup_{n\geq 0}\{\lambda_n\}$. Then $\mu,\nu\in I$ and by \eqref{eq:thastract2eq1} for any $u\in\cN_{\nu}$
$$J_{\mu}(u_n)\leq J_{\lambda_n}(u_n)\leq c_n+\frac{1}{n}\leq J_{\lambda_n}(n_{\lambda_n}(u))+\frac{1}{n}
\leq J_{\nu}(n_{\lambda_n}(u))+\frac{1}{n}\leq J_{\nu}(u)+\frac{1}{n}.$$
Since $J_\mu$ is coercive, we get that $(u_n)$ is bounded. 
Let $n_{\lambda_0}(u_{n})=t_n(u_{n}^++v_n)\in \cN_{\lambda_0}$ and suppose that $t_n\to\infty$.
In view of (B3) applied to $I_{\lambda_0}$ one obtains
\begin{eqnarray*}
\|u_{n}^+\|^2= \frac{I_{\lambda_0}'(t_n(u_n^+ + v_n))(t_n (u_n^+ + v_n))}{t_n^2} \geq 
\frac{2I_{\lambda_0}(t_n(u_n^+ + v_n))}{t_n^2},
\end{eqnarray*}
and by (C2) applied to $I_{\lambda_0}$, we infer that 
there is $t>1$ such that
\begin{equation*}
I_{\lambda_0}(tu_n^+)\to 0
\end{equation*}
as $n\to\infty$. In view of \eqref{eq:thastract2eq1} we get
$$I_{\lambda_n}(tu_n^+)\to 0,$$
and by (B3) we obtain
$$\frac{t^2-1}{2}\|u_n^+\|^2=\frac{t^2-1}{2}I'_{\lambda_n}(u_n)(u_n)\leq I_{\lambda_n}(tu_n^+)-I_{\lambda_n}(u_n)\leq I_{\lambda_n}(tu_n^+)\to 0.$$
Thus $$u_n^+\to 0.$$
Observe that 
\begin{eqnarray*}
0&<&c_{\mu}\leq \lim_{n\to\infty}J_{\mu}(n_{\mu}(u_n))\leq\lim_{n\to\infty}J_{\lambda_n}(n_{\mu}(u_n))\leq \lim_{n\to\infty}J_{\lambda_n}(u_n)\\
&=&\lim_{n\to\infty}\Big(\frac{1}{2}\|u_n^+\|^2-I_{\lambda_n}(u_n)\Big)\leq \lim_{n\to\infty}\frac{1}{2}\|u_n^+\|^2=0
\end{eqnarray*}
and we get a contradiction. Therefore $t_n$ is bounded.
Since 
\begin{eqnarray*}
\|t_nu_{n}^+\|^2 \geq 
2I_{\lambda_0}(n_{\lambda_0}(u_n)),
\end{eqnarray*}
then by (B1) we get the boundedness of $(n_{\lambda_0}(u_n))$.
Observe that
\begin{eqnarray*}
c_{\lambda_0}&\leq &J_{\lambda_0}(n_{\lambda_0}(u_{n}))\leq J_{\lambda_n}(n_{\lambda_0}(u_{n}))
+L|\lambda_n-\lambda_0|\|n_{\lambda_0}(u_{n})\|^2\\
&\leq& J_{\lambda_n}(u_{n})
+L|\lambda_n-\lambda_0|\|n_{\lambda_0}(u_{n})\|^2\\
&\leq& c_{\lambda_n}+\frac{1}{n}+L|\lambda_n-\lambda_0|\|n_{\lambda_0}(u_{n})\|^2,
\end{eqnarray*} 
thus
\begin{equation}\label{eq:proofCont1}
c_{\lambda_0}\leq \liminf_{n\to\infty} c_{\lambda_n}.
\end{equation}
Now let $u_0\in \cN_0$ and assume that $n_{\lambda_n}(u_0)=t_n (u_0^++v_n)\in \cN_{\lambda_n}$ for some $t_n>0$ and $v_n\in \tX$. Then, in view of (B3) applied to $I_{\lambda_n}$, one obtains
\begin{eqnarray*}
\|u_{0}^+\|^2= \frac{I_{\lambda_n}'(t_n(u_0^+ + v_n))(t_n (u_0^+ + v_n))}{t_n^2} \geq 
\frac{2I_{\lambda_n}(t_n(u_0^+ + v_n))}{t_n^2} 
\geq 
\frac{2I_{\nu}(t_n(u_0^+ + v_n))}{t_n^2}.
\end{eqnarray*}
Then by (B2) applied to $I_{\nu}$, we infer that $t_n$ is bounded. Since 
\begin{eqnarray*}
\|t_nu_{0}^+\|^2 \geq 
2I_{\nu}(n_{\lambda_n}(u_0)),
\end{eqnarray*}
then by (B1) we get the boundedness of $(n_{\lambda_n}(u_0))$.
Moreover
\begin{eqnarray*}
J_{\lambda_0}(u_{0})&\geq& J_{\lambda_0}(n_{\lambda_n}(u_{0}))\geq J_{\lambda_n}(n_{\lambda_n}(u_{0}))
-L|\lambda_0-\lambda_n|\|n_{\lambda_n}(u_{0})\|^2\\
&\geq& c_{\lambda_n}-L|\lambda_0-\lambda_n|\|n_{\lambda_n}(u_{0})\|^2.
\end{eqnarray*} 
Hence
$c_{\lambda_0}\geq \limsup_{n\to\infty} c_{\lambda_n}$ and taking into account
\eqref{eq:proofCont1} we get
$$c_{\lambda_0}=\lim_{n\to\infty} c_{\lambda_n},$$
which completes the proof of the continuity of $c_\lambda$. Now observe that
as in \eqref{eq:Abstractest1} we get the strict inequality $c_{\lambda_1}>c_{\lambda_2}$  provided that $c_{\lambda_1}$ is attained. 
\end{proof}

\section{Problem \eqref{eq:main} and proofs}\label{sec:varsetting}
Recall that there is a continuous tangential trace operator $\gamma_t:H(\curl;\Om)\to H^{-1/2}(\pa\Om)$ such that
$$
\gamma_t(E)=\nu\times E_{|\pa\Om}\qquad\text{for any $E\in \cC^{\infty}(\overline\Om,\R^3)$}
$$
and (see \cite[Theorem~3.33]{Monk})
$$
H_0(\curl;\Om)=\{E\in H(\curl;\Om): \gamma_t(E)=0\},
$$
so that any vector field $E\in  W^p_0(\curl;\Om)=\V\oplus\W\subset H_0(\curl;\Om)$ satisfies the boundary condition \eqref{eq:bc}.\\
\indent The spectrum of the curl-curl operator in $H_0(\curl;\Om)$ consists of the eigenvalue $0$ with infinite multiplicity and the eigenspace $\W$, and of a sequence of eigenvalues $0<\la_1\le \la_2\le\dots\le\la_k\to\infty$ with finite multiplicities $m(\lambda_k)$ and eigenfunctions in $\cV$. Indeed, similarly as in \cite[Theorem~4.18]{Monk}, for any $f\in L^2(\Om,\R^3)$ the equation
\begin{equation}\label{eq:operator}
\curlop(\curlop v) + v = f
\end{equation}
has a unique solution $v\in\cV$ and the operator
\[
K:L^2(\Om,\R^3) \to \cV\subset L^2(\Om,\R^3),\quad
 Kf=v\text{ solves \eqref{eq:operator},}
\]
is self-adjoint and compact, since $\cV\subset X_N$ embeds compactly into $L^2(\Om,\R^3)$ due to \cite{Amrouche}[Theorem 2.8].\\
\indent In this section we assume (V), and as a consequence of the compact embedding $\cV\subset L^2(\Om,\R^3)$
\[
(E_1,E_2) = \int_{\Om}\langle \curlop E_1,\curlop E_2\rangle\,dx
             %+ \langle E_1, E_2\rangle\,dx
\]
in $\cV$ is equivalent to the standard inner product in $H(\curl;\Om)$
$$\langle E_1,E_2\rangle = \int_{\Om}\langle \curlop E_1,\curlop E_2\rangle+ \langle E_1, E_2\rangle\,dx,$$
cf. \cite{Monk}[Corollary 3.51].
For $v\in\cV$ and $w\in\cW$ one has
\begin{equation*}%\label{eq:VperpW}
\int_{\Om}\langle v,w\rangle \,dx = 0,
\end{equation*}
which means that $\cV$ and $\cW$ are orthogonal in $L^2(\Om,\R^3)$. Moreover in $W_0^p(\curl;\Om)=\V\oplus\W$ we consider the following norm
$$\|v+w\|:=\big((v,v)+|w|_p^2\big)^{\frac12}\hbox{ for }v+w\in\V\oplus\W,$$
which is equivalent with $\|\cdot\|_{W_0^p(\curl;\Om)}$ due to the embedding $\cV\subset L^p(\Om,\R^3)$.
Clearly, $\cW$ contains all gradient vector fields 
$$\nabla W^{1,p}(\Om)\subset\cW.$$ 
Recall that if $\Omega$ is simply connected with connected boundary, then $\nabla W^{1,p}(\Om)=\cW$ as in \cite{BartschMederski1}. However, for general domains $\{w\in \W: \div(w)=0\}$ may be nontrivial and $\nabla W^{1,p}(\Om)\subsetneq\cW$; see \cite{BartschMederski2} for problem \eqref{eq:main} on general domains with subcritical nonlinearities, where $\cV$ is compactly embedded into $L^p(\Om,\R^3)$.\\
\indent Let $\lambda\leq 0$ and  
\[
\nu:=\min\{k\in\N:\la_{k}+\lambda>0\} = \max\{k\in\N:\la_{k-1}+\lambda\le0\}
\]
be the dimension of the semi-negative eigenspace, where $\lambda_0=0$. Then, the quadratic form $Q$ given by \eqref{eq:DefQ} is 
negative semidefinite on
$\tcV$
and negative definite if $\lambda_{\nu-1}<-\lambda$. Here  $\tcV$ is the finite sum of the eigenspaces associated to all $\la_k$ for $k<\nu$ and $\tcV=\{0\}$ if $\nu=1$. Moreover $Q$ is
positive definite on the space $\cV^+= \tcV^{\perp}$ being the infinite sum of the eigenspaces associated to the eigenvalues $\la_k$ for $k\geq \nu$.
For any $v\in\V$ we denote $v^+\in\V^+$ and $\tv \in\tcV$ the corresponding summands such that $v=v^++\tv $.\\
\indent Let $X^+:=\V^+$, $\tX:=\tcV\oplus\W$ and we consider the functional $J:X=\cV\oplus\cW\to\R$ defined by \eqref{eq:defOfJ}, i.e. for $E=v+w=v^++\tv+w$ we have
\[
\begin{aligned}
J_\lambda(E)
 &= \frac12\int_{\Om}|\curlop v|^2\,dx + \frac\la2\int_{\Om} |v+w|^2\,dx
    - \frac1p\int_{\Om}|v+w|^p\,dx\\
 &= \frac12\|v\|^2 + \frac\la2\int_{\Om}(|v|^2+|w|^2)\,dx
    - \frac1p\int_{\Om}|v+w|^p\,dx\\
    &=\frac12\|v^+\|^2 - I_\lambda(v+w),
\end{aligned}
\]
where
\begin{equation}\label{eq:DefOfI}
I_\lambda(v+w) = -\frac12\|\tv\|^2 - \frac\la2\int_\Om\left(|v|^2+|w|^2\right)\,dx
          + \frac1p\int_\Om |v+w|^p\,dx.
\end{equation}
Hence  $J_\lambda$ has the form \eqref{EqJ} and
we shall show that $J_\lambda$ satisfies the assumptions (A1)-(A4), (B1)-(B3) and (C1)-(C2) from Section \ref{sec:Nehari}.
\begin{Lem}\label{eq:LemB3check}
If $E\in \V\oplus \W$, $\tv\in\tcV$, $w\in \W$ and $t\geq 0$, then
\begin{equation}\label{eq:LemB3checkeq}
J_\lambda(E)\geq J_\lambda(tE+\tv+w) -J'_\lambda(E)\left(\frac{t^2-1}{2}E+t(\tv+w)\right).
\end{equation}
Moreover the strict inequality holds provided that $E\neq tE+\tv+w$.
\end{Lem}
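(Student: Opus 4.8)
The plan is to reduce \eqref{eq:LemB3checkeq} to an inequality for a single real function of $t$, exactly in the spirit of the Szulkin–Weth / Bartsch–Mederski verification of (B3). Write $E = v^+ + \tv_0 + w_0$ with $v^+\in\cV^+$, $\tv_0\in\tcV$, $w_0\in\cW$, and let $\tv\in\tcV$, $w\in\cW$, $t\ge 0$ be arbitrary. Plugging the explicit form \eqref{eq:defOfJ} of $J_\lambda$ and its derivative into the right-hand side of \eqref{eq:LemB3checkeq}, the pure curl-part $\frac12\|v^+\|^2$ contributes a term that, after collecting, reduces to $-\tfrac{(t-1)^2}{2}\|v^+\|^2\le 0$ (it only involves $v^+$ because $\curlop$ kills $\tcV\oplus\cW$ up to the sign structure — more precisely the quadratic form $\int|\curlop\cdot|^2$ is a positive definite quadratic form on $\cV^+$ and it splits off cleanly). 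The key point is then that the remaining terms are governed by the convex function
\[
g(\tau) := \frac1p\int_\Om |v+w|^{p-2}\langle v+w,\,\tau E + \ldots\rangle,
\]
so I would instead phrase the whole computation through the elementary scalar inequality
\[
\varphi(t,\xi):=\frac{|t a+\xi|^p}{p}-\frac{|a|^p}{p}-|a|^{p-2}a\Big(\frac{t^2-1}{2}a+t\xi\Big)\ \ge\ 0
\qquad(a\in\R^m,\ \xi\in\R^m,\ t\ge0),
\]
with strict inequality unless $ta+\xi=a$. This is the pointwise engine; integrating it over $\Om$ with $a = E(x)$ and $\xi = (\tv+w)(x)$ handles the $\frac1p|E|^p$ terms.

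The order of steps: first, I establish the scalar inequality $\varphi(t,\xi)\ge 0$. Setting $h(s)=\frac1p|a+s\xi|^p$ one has $\varphi(t,\xi)=\frac1p|ta+\xi|^p-\frac1p|a|^p-|a|^{p-2}\langle a,\xi\rangle - \frac{t^2-1}{2}|a|^p$; I would verify it by reducing to one real variable along the segment and using that $r\mapsto\frac1p r^{p}$ composed with a norm is convex, together with the fact that $\tfrac{t^2-1}{2}\le$ the integral of the relevant increments — this is precisely the computation in \cite{BartschMederski1} for their assumption (B3), only specialised to $I_\lambda$ with $F(E)=\frac1p|E|^p$. Second, I treat the quadratic pieces: the $-\frac{\lambda}{2}\int(|v|^2+|w|^2)$ term and the $-\frac12\|\tv_0\|^2$ term are concave (indeed affine-plus-negative-definite) in the $(t,\tv,w)$ directions, so the corresponding contribution to the RHS of \eqref{eq:LemB3checkeq} is $\le 0$; since $\lambda\le 0$, $-\frac\lambda2\int|\cdot|^2$ is a nonnegative definite quadratic form, and the elementary identity $q(a)-q(ta+\xi)+Dq(a)[\frac{t^2-1}{2}a+t\xi] = -q((t-1)a+\xi)\le 0$ for a nonnegative quadratic form $q$ closes this case; the $-\frac12\|\tv_0\|^2$ term is handled identically because $\|\cdot\|^2$ restricted to $\tcV$ is a nonnegative quadratic form and the relevant shift lies in $\tcV$. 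Third, I add the pointwise inequality (integrated) to the two quadratic inequalities to obtain \eqref{eq:LemB3checkeq}.

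For the strict inequality: if $E\neq tE+\tv+w$, then either $(t-1)v^+\neq 0$ (whence the curl term is strictly negative), or $v^+$ is unchanged but $(t-1)\tv_0+\tv\neq 0$ or $(t-1)w_0+w\neq 0$; in the latter cases the pointwise integrand $\varphi(t(x),\xi(x))$ is strictly positive on a set of positive measure whenever $E(x)\neq t E(x)+\tv(x)+w(x)$ there — and this happens on a positive-measure set unless $tE+\tv+w=E$ a.e., i.e. unless $E=tE+\tv+w$ in $X$, contradicting the hypothesis. (One must be slightly careful that the combination of the various nonnegative terms cannot cancel; but each term is separately $\le 0$ on the RHS side of the rearranged inequality, so strict negativity of any one of them suffices.)

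The main obstacle is the bookkeeping in Step 1: verifying the scalar inequality $\varphi(t,\xi)\ge0$ with the sharp constant $\tfrac{t^2-1}{2}$ in front of $|a|^p$, and in particular pinning down exactly when equality holds, requires a careful one-variable convexity argument (differentiate in $t$, use monotonicity, check the boundary $t=0$ and the critical point $t=1$). Everything else — the quadratic terms, the passage from pointwise to integral, the assembly — is routine once that pointwise lemma is in hand. I expect the write-up to mirror closely the proof of the abstract (B3) in \cite{BartschMederski1,BartschMederski2}, now made explicit for the curl-curl functional with the pure power nonlinearity and $\lambda\le 0$.
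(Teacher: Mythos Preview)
Your overall strategy coincides with the paper's: decompose $D:=J_\lambda(E)-J_\lambda(tE+\tv+w)+J_\lambda'(E)\big[\tfrac{t^2-1}{2}E+t(\tv+w)\big]$ into the quadratic pieces and the pointwise term $\int_\Om\varphi(t,x)\,dx$, and prove $\varphi(t,x)\ge0$ for the pure power. That part is fine.

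However, your quadratic identity is wrong. For any quadratic form $q(x)=\tfrac12 B(x,x)$ one computes
\[
q(a)-q(ta+\xi)+Dq(a)\Big[\tfrac{t^2-1}{2}a+t\xi\Big]=-q(\xi),
\]
not $-q((t-1)a+\xi)$. Applying this with $q=\tfrac12\|\,\cdot\,^+\|^2$ and $\xi=\tv+w$ (which has zero $\cV^+$-component) shows that the $\|v^+\|^2$-part contributes exactly $0$, \emph{not} $-\tfrac{(t-1)^2}{2}\|v^+\|^2$. The remaining quadratic contributions give precisely $-\tfrac12 Q(\tv)-\tfrac{\lambda}{2}|w|_2^2\ge0$, which is the paper's reduction \eqref{eq:B3check}.

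This error is harmless for the non-strict inequality, but it breaks your strictness argument: in the case $(t-1)v^+\neq0$ (e.g.\ $\tv=w=0$, $t\neq1$, $v^+\neq0$) there is no ``strictly negative curl term'' to invoke. You must instead rely on the pointwise integrand, exactly as the paper does: when $\tv=0$, $w=0$, $t\neq1$, one has $\varphi(t,x)=\big(\tfrac{t^p}{p}-\tfrac{t^2}{2}+\tfrac12-\tfrac1p\big)|E(x)|^p>0$ on a set of positive measure. With that correction your argument is essentially the paper's.
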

\begin{proof}
Let $E\in \V\oplus \W$, $\tv\in\tcV$, $w\in \W$ and $t\geq 0$. Then we need to show that
\begin{equation}\label{eq:B3check}
\begin{aligned}
&J_\lambda'(E)\left[\frac{t^2-1}{2}E+t(\tv+w)\right] + J_\lambda(E) - J_\lambda(tE+\tv+w)\\
%&&\hspace{1cm}
% =\frac12\|\tv\|^2 + \frac\la2|\phi+\psi|_2^2 + \int_\O\vp(t,x)\;dx\\
&\hspace{1cm}
 = -\frac12Q(\tv) - \frac\la2|w|_2^2
     + \int_\Om\vp(t,x)\,dx
 \geq 0
\end{aligned}
\end{equation}
where
\begin{equation*}%\label{eq:phi}
\vp(t,x)
 = -\Big\langle |E|^{p-2}E,\frac{t^2-1}{2}E+t(\tv+w)\Big\rangle - \frac1p|E|^p + \frac1p|tE+\tv+w|^p.
\end{equation*}
Let $E(x)\neq 0$. We can check that $\vp(0,x)\geq 0$, $\vp(t,x)\to\infty$ as $t\to\infty$ and note that if $\partial_t\vp (t_0,x)=0$ for some $t_0>0$, then $t_0E+\tv+w=0$ or $|E|^p=|t_0E+\tv+w|^p$, hence $\vp(t_0,x)\geq 0$. Then
we infer that $\vp(t,x)\geq 0$ for any $t\geq0$. If $Q(\tv) < 0$ or $w\neq 0$ then the inequality \eqref{eq:B3check} is strict. 
If $\tv=0$ and $w=0$, then $\vp(t,x)=\big(\frac{t^p}{p}-\frac{t^2}{2}+\frac12-\frac1p\big)|E|^p>0$ provided that $E\neq tE$.
\end{proof}

Similarly as in \cite{BartschMederski1,BartschMederski2} we prove the following lemma.
\begin{Lem}\label{eq:LemA1A4}
Conditions (A1)-(A4), (B1), (B3) hold for $J_\lambda$. 
\end{Lem}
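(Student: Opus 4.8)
The task is to verify the list of structural assumptions for the functional $J_\lambda$ associated to \eqref{eq:main}, with the decomposition $X^+=\cV^+$, $\tX=\tcV\oplus\cW$, and $I_\lambda$ given by \eqref{eq:DefOfI}. I would treat the conditions essentially in the order they appear, since each is a short computation once the right inequality is identified, and only two of them require a genuine idea.

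First, for (A1) I would write
\[
I_\lambda(v+w)=-\tfrac12\|\tv\|^2-\tfrac\la2\bigl(|v|_2^2+|w|_2^2\bigr)+\tfrac1p|v+w|_p^p,
\]
note that $I_\lambda(0)=0$, and observe that $-\tfrac12\|\tv\|^2-\tfrac\la2|v|_2^2=-\tfrac12 Q(\tv)-\tfrac\la2|v^+|_2^2\ge 0$ because $\la\le0$ and $Q$ is negative semidefinite on $\tcV$; together with $-\tfrac\la2|w|_2^2\ge0$ and $\tfrac1p|v+w|_p^p\ge0$ this gives $I_\lambda\ge0$. For (A4) I would use that on $X^+=\cV^+$ one has $J_\lambda(v^+)=\tfrac12\|v^+\|^2-\tfrac\la2|v^+|_2^2-\tfrac1p|v^+|_p^p\ge\tfrac12\|v^+\|^2-C\|v^+\|^p$ by the continuous embedding $\cV\hookrightarrow L^p$ from $(V)$, which is positive on a small sphere $\|v^+\|=r$ since $p>2$. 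For (B1), coercivity of $\|u^+\|+I(u)$ as $\|u\|\to\infty$ follows because $\|u^+\|=\|v^+\|$ controls the $\cV^+$-part, while $I_\lambda$ controls the rest: if $\|u\|\to\infty$ with $\|v^+\|$ bounded, then either $\|\tv\|\to\infty$, in which case $-\tfrac12Q(\tv)\to\infty$ (note $Q$ is negative definite on $\tcV$ unless $\la=-\lambda_{\nu-1}$, but even in the semidefinite case one combines with the $|v+w|_p^p$ term), or $|w|_p\to\infty$, in which case the $L^p$-term dominates; the elementary inequality $|v+w|_p^p\ge c|w|_p^p - C|v|_p^p$ handles the cross terms. (B3) is exactly the content of Lemma~\ref{eq:LemB3check}, which has already been proved, so I would simply cite it, checking that (B3) as stated in Section~\ref{sec:Nehari} is the rewriting of \eqref{eq:LemB3checkeq} obtained by expanding $J_\lambda(E)=\tfrac12\|v^+\|^2-I_\lambda(E)$ and cancelling the quadratic pieces, so that only the $I'_\lambda(E)(\cdots)+I_\lambda(E)-I_\lambda(tE+v)$ terms with the strict sign remain.

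The main obstacle is (A3): if $u_n\cTto u$ (i.e.\ $u_n^+\to u^+$ strongly in $\cV^+$ and $\tu_n\weakto\tu$ in $\tcV\oplus\cW$) and $I_\lambda(u_n)\to I_\lambda(u)$, I must upgrade to $u_n\to u$ in the norm of $X$. Write $u_n=v_n^++\tv_n+w_n$. The part $-\tfrac12\|\tv_n\|^2$ is a weakly continuous finite-rank form, hence converges to $-\tfrac12\|\tv\|^2$; the quadratic terms $-\tfrac\la2(|v_n|_2^2+|w_n|_2^2)$ are handled by the \emph{compact} embedding $\cV\hookrightarrow L^2$ (recalled in this section via the operator $K$) for the $v_n$-part, while for $w_n$ one uses that $\cW\hookrightarrow L^2$ restricted to bounded sequences... here is the subtlety: $\cW$ need not embed compactly into $L^2$. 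The way around it, exactly as in \cite{BartschMederski1,BartschMederski2}, is to observe that the troublesome term is $\tfrac1p|v_n+w_n|_p^p$: since $v_n\to v$ in $L^p$ (from $v_n^+\to v^+$ in $\cV^+\hookrightarrow L^p$ and $\tv_n\to\tv$ in the finite-dimensional $\tcV$), Brezis--Lieb gives $|v_n+w_n|_p^p-|v_n+w_n-(v+w)|_p^p\to|v+w|_p^p$, and combining this with the hypothesis $I_\lambda(u_n)\to I_\lambda(u)$ forces $|v_n+w_n-(v+w)|_p^p\to0$ together with $-\tfrac\la2|w_n|_2^2\to-\tfrac\la2|w|_2^2$; hence $w_n\to w$ in $L^p$ (if $\la<0$ one gets $L^2$-convergence directly and then $L^p$ from the previous line; if $\la=0$ the $L^p$ term alone does it), and therefore $\tu_n\to\tu$ in $\tX$, which with $u_n^+\to u^+$ gives $u_n\to u$.

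Finally (A2), $\cT$-sequential lower semicontinuity of $I_\lambda$, is the easy direction of the same analysis: along $u_n\cTto u$ the finite-rank term $-\tfrac12\|\tv_n\|^2\to-\tfrac12\|\tv\|^2$, the $L^2$-terms in $v_n$ converge by compactness and are nonnegative, $-\tfrac\la2|w_n|_2^2\ge-\tfrac\la2|w|_2^2+o(1)$ is false in general so instead I keep $-\tfrac\la2|w_n|_2^2\ge0$ and use weak lower semicontinuity of $w\mapsto|w|_2^2$ on $\cW$ (bounded sequences), and $\liminf\tfrac1p|v_n+w_n|_p^p\ge\tfrac1p|v+w|_p^p$ by weak lower semicontinuity of the $L^p$-norm together with $v_n\to v$ strongly in $L^p$ and $w_n\weakto w$. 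Summing the liminfs gives $\liminf I_\lambda(u_n)\ge I_\lambda(u)$. I would remark that all of this is parallel to \cite{BartschMederski1,BartschMederski2}, the only new point being that $(V)$ (rather than a compact embedding) is what is used for the $L^p$-estimates, which is harmless here because the $L^p$-term enters $I_\lambda$ and one only needs lower semicontinuity, not compactness, for (A1)--(A4),(B1),(B3).
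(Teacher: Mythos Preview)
Your overall plan matches the paper's: (A1), (A4), (B1) are routine and (B3) is exactly Lemma~\ref{eq:LemB3check}. Two points deserve comment.

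\medskip
\textbf{On (A2).} The paper's argument is a single line: $I_\lambda$ is \emph{convex}, hence weakly (and therefore $\cT$-) sequentially lower semicontinuous. To see convexity, note that by $L^2$-orthogonality of $v^+$ and $\tv$,
\[
-\tfrac12\|\tv\|^2-\tfrac\la2|v|_2^2-\tfrac\la2|w|_2^2
=-\tfrac12 Q(\tv)-\tfrac\la2|v^+|_2^2-\tfrac\la2|w|_2^2,
\]
a sum of nonnegative quadratic forms (recall $Q\le0$ on $\tcV$ and $\la\le0$); adding the convex term $\tfrac1p|v+w|_p^p$ keeps convexity. Your term-by-term treatment is workable but the sentence ``$-\tfrac\la2|w_n|_2^2\ge-\tfrac\la2|w|_2^2+o(1)$ is false in general'' is off: weak lower semicontinuity of $|\cdot|_2^2$ gives precisely $\liminf_n|w_n|_2^2\ge|w|_2^2$, which is the inequality you then use.

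\medskip
\textbf{On (A3): a genuine gap.} Your appeal to Brezis--Lieb is not justified. The Brezis--Lieb lemma requires \emph{pointwise a.e.\ convergence} of the sequence, and from $w_n\weakto w$ in $L^p$ you have no such information (e.g.\ $w_n(x)=\sin(nx)$ on an interval weakly converges to $0$ but converges a.e.\ nowhere, and the identity $|w_n|_p^p-|w_n-0|_p^p\to|0|_p^p$ already fails for $p>2$ once you add a nonzero limit). Strong $L^p$-convergence of $v_n$ alone does not repair this.

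The paper's route is both simpler and correct. Under $u_n\cTto u$ one has $\tv_n\to\tv$ (finite dimensions) and $v_n\to v$ in $L^2$ (compact embedding $\cV\hookrightarrow L^2$), so from $I_\lambda(u_n)\to I_\lambda(u)$ one isolates
\[
-\tfrac\la2|w_n|_2^2+\tfrac1p|v_n+w_n|_p^p\ \longrightarrow\ -\tfrac\la2|w|_2^2+\tfrac1p|v+w|_p^p.
\]
Each summand on the left is weakly l.s.c.\ with $\liminf$ at least the corresponding right-hand value; since their sum converges, each converges, in particular $|E_n|_p\to|E_0|_p$ where $E_n=v_n+w_n$. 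Now $E_n\weakto E_0$ in $L^p$ together with norm convergence and uniform convexity of $L^p$ (Radon--Riesz) gives $E_n\to E_0$ strongly in $L^p$. Since $v_n\to v$ in $L^p$ already, $w_n\to w$ in $L^p$, and with $v_n^+\to v^+$, $\tv_n\to\tv$ in $\cV$ you obtain $u_n\to u$ in $X$. Replace your Brezis--Lieb step by this argument and (A3) goes through.
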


\begin{proof}
As in \cite{BartschMederski1}[Lemma 5.1] we show that (A1)-(A2), (A4) and (B1) hold, i.e.
\begin{itemize}
 \item 
 $I_\lambda$ is of class $\cC^1$,  $I_\lambda(E)\geq I_\lambda(0)=0$ for any $E\in X=\V\times\W$, and since $I_\lambda$ is convex, then $I_\lambda$ is $\cT$-sequentially lower semicontinuous.
\item There is $r>0$ such that
%\begin{equation}\label{EqMPG}
$\displaystyle 0<\inf_{\stackrel{v\in \V^+}{\|v\|=r}}J_\lambda(v)$.
%\end{equation}
\item $\|v^+\| + I_\lambda(v+ w)\to\infty$ as $\|v+w\|\to\infty$.
\end{itemize}
Moreover we easily check (A3), since $E_n\weakto E_0$ in $L^p(\Om,\R^3)$ and $I_\lambda(E_n)\to I_\lambda(E_0)$ imply $|E_n|_p\to |E_0|_p$ as $n\to\infty$, thus $E_n\to E_0$ in $L^p(\Om,\R^3)$.  Observe that (B3) is a direct consequence of Lemma \ref{eq:LemB3check}.
\end{proof}

\begin{Lem}\label{eq:LemB2C2}
Conditions (B2) and (C2) are satisfied.
\end{Lem}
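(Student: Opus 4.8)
Recall that (C2) states $I_\lambda(t_nu_n)/t_n^2\to\infty$ whenever $t_n\to\infty$ and $(I_\lambda(tu_n^+))_n$ is bounded away from $0$ for each $t>1$, and that (C2) plus positivity of $I_\lambda$ on $X^+\setminus\{0\}$ yields (B2); so it suffices to establish (C2) together with the observation $I_\lambda(v)>0$ for $v\in\V^+\setminus\{0\}$ (which is immediate from \eqref{eq:DefOfI} since $\tv=0$ there, $-\lambda\ge 0$, and $|v|_p>0$). The heart of the matter is the cubic-type term $\frac1p\int_\Om|v+w|^p\,dx$ in $I_\lambda$; the quadratic pieces $-\frac12\|\tv\|^2-\frac\lambda2\int_\Om(|v|^2+|w|^2)$ scale like $t^2$ and hence contribute $O(1)$ to $I_\lambda(t_nu_n)/t_n^2$, so they are harmless and can be discarded at the outset.

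First I would write $u_n = v_n + w_n \in \V\oplus\W$ with $v_n = v_n^+ + \tv_n$, so that $I_\lambda(t_nu_n)/t_n^2 = t_n^{p-2}\frac1p|v_n+w_n|_p^p + O(1)$, and similarly $I_\lambda(tu_n^+) = t^{p-2}\cdot t^2\cdot\frac1p|v_n^+|_p^p + O(1)$ up to the irrelevant quadratic terms (more precisely $I_\lambda(tu_n^+)=\frac{t^p}{p}|v_n^+|_p^p - \frac{\lambda t^2}{2}|v_n^+|_2^2$, since $u_n^+=v_n^+$ has trivial $\tcV$ and $\W$ components). The hypothesis that $(I_\lambda(tu_n^+))_n$ is bounded away from $0$ for each fixed $t>1$ therefore forces $\liminf_n |v_n^+|_p > 0$: indeed if $|v_{n_k}^+|_p\to 0$ along a subsequence then, $\W$ and $\tcV$ being orthogonal to $\V^+$ in $L^2$ and $\V^+\subset L^p$ continuously, also $|v_{n_k}^+|_2\to 0$, whence $I_\lambda(tu_{n_k}^+)\to 0$ for every $t$, a contradiction. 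So fix $\delta>0$ with $|v_n^+|_p\ge\delta$ for all large $n$.

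Now I need a lower bound on $|v_n+w_n|_p$ in terms of $|v_n^+|_p$. This is the one genuinely non-routine point: $w_n$ and $\tv_n$ are not controlled by $v_n^+$ in $L^p$, and $|v_n+w_n|_p$ could a priori be small through cancellation. The plan is to argue by contradiction at the level of (C2) itself: suppose $t_n\to\infty$, $(I_\lambda(tu_n^+))$ bounded away from $0$ for each $t>1$, but $I_\lambda(t_nu_n)/t_n^2\not\to\infty$, i.e. along a subsequence $t_n^{p-2}|v_n+w_n|_p^p$ stays bounded, hence $|v_n+w_n|_p\to 0$. Then I feed $v_n^+$ into the geometry of Proposition~\ref{prop:nehari} / Lemma~\ref{eq:LemB3check}: writing $n_\lambda(v_n^+/\|v_n^+\|) = s_n(v_n^+ + z_n)$ with $z_n\in\tX=\tcV\oplus\W$, inequality \eqref{eq:LemB3checkeq} applied with $E=u_n$, $t = s_n/\|v_n^+\|\cdot\|v_n^+\|$ appropriately and the $\tX$-shift chosen to land on $n_\lambda(v_n^+/\|v_n^+\|)$, together with $J_\lambda'(u_n)$-terms, is not quite available since $u_n$ need not lie in $\cN_\lambda$; instead I would use Lemma~\ref{eq:LemB3check} directly with $E=u_n$ to get $J_\lambda(s_n v_n^+ + (\text{shift})) \le J_\lambda(u_n) + (\text{correction})$ only after placing $u_n$ on its Nehari fibre, which is circular. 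The cleaner route, and the one I would actually commit to: prove (C2) by the \emph{same} scheme used in the proof of Theorem~\ref{ThLink2}(a) and Theorem~\ref{ThContinuityuOfClambda} — namely, from $\|v_n^+\|^2 = I_\lambda'(t_n u_n)(t_n u_n)/t_n^2 \ge 2I_\lambda(t_n u_n)/t_n^2$ when $t_n u_n\in\cN_\lambda$ is the relevant comparison point, deduce boundedness of $\|v_n^+\|$, then invoke $|v_n^+|_p\ge\delta$ and continuity of $\V^+\hookrightarrow L^p$ to get a lower bound; but since here $u_n$ is arbitrary I will instead simply estimate
\[
\frac{I_\lambda(t_nu_n)}{t_n^2} \ge \frac{1}{p}t_n^{p-2}|v_n+w_n|_p^p - \frac{|\lambda|}{2}\big(|v_n|_2^2+|w_n|_2^2\big) - \frac12\|\tv_n\|^2,
\]
and observe that the only way this fails to go to $+\infty$ is that $|v_n+w_n|_p\to 0$ \emph{and} $\|u_n\|$ (hence $\|\tv_n\|$, $|w_n|_2$) blows up at a rate tied to $t_n$; a bookkeeping argument comparing with $I_\lambda(tu_n^+)$ for a fixed large $t$, using orthogonality $\langle v_n^+, \tv_n+w_n\rangle_{L^2}=0$ and $\|v_n\|^2=\|v_n^+\|^2+\|\tv_n\|^2$, then forces $|v_n^+|_p\to 0$, contradicting the standing assumption. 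I expect the main obstacle to be exactly this: ruling out that $L^p$-cancellation between $v_n^+$ and $\tv_n+w_n$ destroys the lower bound on $|v_n+w_n|_p$, which I would handle by noting $v_n^+ \rightharpoonup$ something and applying the Brezis–Lieb lemma, or more simply by the convexity/monotonicity inequality already encoded in $\vp(t,x)\ge 0$ from Lemma~\ref{eq:LemB3check}, which gives $|v_n+w_n|_p^p \ge |t(v_n^++\tv_n+w_n) + (1-t)(\cdots)|$-type control — in practice the reference proofs in \cite{BartschMederski1,BartschMederski2} handle the analogous (B2) this way, and I would mirror that computation, the new content being only that the $p=6$ endpoint is allowed, which changes nothing in these purely algebraic scaling estimates since no compactness is used here.
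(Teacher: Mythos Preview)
Your overall plan coincides with the paper's: reduce to (C2) via the positivity $I_\lambda(v)>0$ on $\cV^+\setminus\{0\}$, observe that the quadratic pieces in $I_\lambda(t_nu_n)/t_n^2$ are harmless, and argue by contradiction that if $I_\lambda(t_nu_n)/t_n^2$ fails to diverge then (along a subsequence) $|v_n+w_n|_p\to 0$, which must force $|v_n^+|_p\to 0$ and hence $I_\lambda(tv_n^+)\to 0$, contradicting the hypothesis. You also correctly locate the only nontrivial step: the implication $|v_n+w_n|_p\to 0\Rightarrow |v_n^+|_p\to 0$.

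The gap is that you do not actually prove this implication, and the routes you sketch for it are either circular or inapplicable. The Nehari--fibre comparisons via Proposition~\ref{prop:nehari} or Lemma~\ref{eq:LemB3check} require $u_n\in\cN_\lambda$, which (as you yourself note) is not assumed. The Brezis--Lieb lemma needs a.e.\ convergence of $v_n^+$ or $u_n$, which is nowhere available here. And the pointwise inequality $\vp(t,x)\ge 0$ from Lemma~\ref{eq:LemB3check} compares values of $J_\lambda$ along a Nehari fibre; it does not yield a lower bound for $|v_n+w_n|_p$ in terms of $|v_n^+|_p$.

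The paper's proof of this step is a one--liner that you are missing: the projections in the two direct sums are continuous in $L^p$. Concretely, the projection $\cl_{L^p}\cV\oplus\cW\to\cW$ is $L^p$--continuous (this is exactly the fact invoked in the proof of Lemma~\ref{eq:lemCoercive}), so $|v_n+w_n|_p\to 0$ gives $|v_n|_p\to 0$. Then $|v_n|_2\to 0$ by H\"older on the bounded domain, and since $\cV^+$ and $\tcV$ are $L^2$--orthogonal eigenspaces with $\tcV$ finite--dimensional, one gets $|\tv_n|_2\to 0$, hence $|\tv_n|_p\to 0$, hence $|v_n^+|_p\le |v_n|_p+|\tv_n|_p\to 0$. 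This immediately gives $I_\lambda(tv_n^+)=\tfrac{t^p}{p}|v_n^+|_p^p-\tfrac{\lambda t^2}{2}|v_n^+|_2^2\to 0$, the desired contradiction. No compactness or critical--exponent analysis enters; the argument is purely the $L^p$--boundedness of the Helmholtz--type projections, which also explains why nothing changes at $p=6$.
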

\begin{proof}
Since $I_\lambda(v)>0$ for $v\in \V^+$, it is enough to show only (C2).
Let us consider sequences $t_n\to\infty$ and $v_n\in\cV$, $w_n\in\cW$ such that
for some $c>0$ and $t>1$ we have
\begin{equation}\label{eq:LemC2}
I_\lambda(tv_n^+)\geq c>0\hbox{ for any }n\geq 1.
\end{equation}
Note that
$$I_\lambda(t_n(v_n+w_n))\geq \frac{t_n^{p-2}}{p}|v_n+w_n|_p^p$$
and if $\liminf_{n\to\infty}|v_n+w_n|_p>0$, then passing to a subsequence
we conclude. Suppose that, up to a subsequence, $v_n+w_n\to 0$ in $L^p(\Om,\R^3)$. Then $v_n^+\to 0$ in $L^p(\Om,\R^3)$ and in $L^2(\Om,\R^3)$, hence 
$I_\lambda(tv_n^+)\to 0$, which contradicts \eqref{eq:LemC2}.
\end{proof}

\subsection{Weak-to-weak$^*$ continuity}

In general $J'_\lambda$ is not weak-to-weak$^*$ continuous in $\V\oplus\W$, so it is not clear whether a weak limit point of a Palais-Smale sequence is a critical point. We are able to show this continuity of $J_\lambda$ on $\cN_\lambda$ provided that $\cV$ is compactly embedded into $L^p(\Om,\R^3)$. 

\begin{Lem}\label{eq:weaktoweakSubcritical}
If $\cV$ is compactly embedded into $L^p(\Om,\R^3)$, then $J'_\lambda$ is weak-to-weak$^*$ continuous on $\cN_\lambda$ and (C3) holds.
\end{Lem}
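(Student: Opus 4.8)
The statement to prove is Lemma~\ref{eq:weaktoweakSubcritical}: assuming $\cV$ embeds compactly into $L^p(\Om,\R^3)$, the derivative $J'_\lambda$ is weak-to-weak$^*$ continuous on $\cN_\lambda$, and moreover (C3) holds. My plan is to take a sequence $(E_n)\subset\cN_\lambda$ with $E_n\weakto E_0$ in $X=\V\oplus\W$, write $E_n=v_n+w_n$ with $v_n\in\V$, $w_n\in\W$, and show that $J'_\lambda(E_n)\overset{\ast}{\weakto} J'_\lambda(E_0)$ in $X^*$; that is, $J'_\lambda(E_n)(\Phi)\to J'_\lambda(E_0)(\Phi)$ for every $\Phi\in X$. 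Recall
\[
J'_\lambda(E)(\Phi)=\int_\Om\langle\curlop E,\curlop\Phi\rangle\,dx+\lambda\int_\Om\langle E,\Phi\rangle\,dx-\int_\Om|E|^{p-2}\langle E,\Phi\rangle\,dx.
\]
The first term is continuous for the weak topology of $X$ since $\curlop$ is a bounded linear operator, so $\curlop E_n\weakto\curlop E_0$ in $L^2$. For the second term I use that $(E_n)$ is bounded in $X$, hence bounded in $\V\oplus\W$; the $\V$-components $v_n$ are bounded in $\cV$, which by the compact embedding $\cV\subset L^2(\Om,\R^3)$ (recalled in Section~\ref{sec:varsetting} via \cite{Amrouche}[Theorem~2.8]) gives $v_n\to v_0$ strongly in $L^2$ after passing to a subsequence; combined with $w_n\weakto w_0$ in $L^p\hookrightarrow L^2$, this yields $\int_\Om\langle E_n,\Phi\rangle\to\int_\Om\langle E_0,\Phi\rangle$.

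\textbf{The main obstacle.} The genuinely delicate term is the nonlinear one $\int_\Om|E_n|^{p-2}\langle E_n,\Phi\rangle\,dx$, and this is where the hypothesis $E_n\in\cN_\lambda$ is essential — for a general bounded sequence the claim is false (that is the whole point of the critical case). Here is the key mechanism: on $\cN_\lambda$ the element $E_n=n_\lambda(v_n^+/\|v_n^+\|)$ is the global maximum of $J_\lambda$ over $\R^+v_n^+\oplus\tcV\oplus\W$; in particular $w_n$ maximizes $w\mapsto J_\lambda(v_n+\tv_n+w)$ over $\W$, and testing against $\W$ gives the Euler equation
\[
\lambda\int_\Om\langle v_n+w_n,\psi\rangle\,dx=\int_\Om|v_n+w_n|^{p-2}\langle v_n+w_n,\psi\rangle\,dx\qquad\text{for all }\psi\in\W.
\]
When $\lambda<0$ this forces control of $w_n$: together with coercivity of $J_\lambda$ on $\cN_\lambda$ and boundedness of $(E_n)$, one extracts $w_n\to w_0$ strongly in $L^p$ (not merely weakly). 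The argument is: from the constraint equation and the strong $L^2$-convergence $v_n\to v_0$, one shows the ``gradient part'' $|w_n|_p$ cannot oscillate — any $L^p$-weak limit must in fact be an $L^p$-strong limit because the map $w\mapsto\lambda|v_0+w|_2^2-\frac2p|v_0+w|_p^p$ is concave and its unique maximizer on $\W$ depends continuously on the $L^2$-datum $v_0$. Thus $E_n=v_n+w_n\to E_0=v_0+w_0$ strongly in $L^p(\Om,\R^3)$. Once this strong $L^p$-convergence is in hand, the Nemytskii map $E\mapsto|E|^{p-2}E$ is continuous from $L^p$ to $L^{p'}$, so $|E_n|^{p-2}E_n\to|E_0|^{p-2}E_0$ in $L^{p'}$, and hence $\int_\Om|E_n|^{p-2}\langle E_n,\Phi\rangle\to\int_\Om|E_0|^{p-2}\langle E_0,\Phi\rangle$ for every $\Phi\in L^p$, in particular for $\Phi\in X$.

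\textbf{Conclusion and (C3).} Combining the three limits gives $J'_\lambda(E_n)(\Phi)\to J'_\lambda(E_0)(\Phi)$ for every $\Phi\in X$, which is exactly weak-to-weak$^*$ continuity of $J'_\lambda$ on $\cN_\lambda$; since every subsequence of $(E_n)$ has a further subsequence along which this holds with the same limit $J'_\lambda(E_0)$, the full sequence converges. For the second half of (C3), the weak sequential lower semicontinuity of $J_\lambda$ on $\cN_\lambda$: with $E_n\weakto E_0$ and $E_0\in\cN_\lambda$, we have $\liminf\|\curlop v_n^+\|_2^2\geq\|\curlop v_0^+\|_2^2$ by weak lower semicontinuity of the norm, while the terms $\frac\lambda2|E_n|_2^2$, $-\frac12\|\tv_n\|^2$ converge (strong $L^2$ and finite-dimensional convergence), and $\frac1p|E_n|_p^p\to\frac1p|E_0|_p^p$ by the strong $L^p$-convergence established above; summing yields $\liminf_{n\to\infty}J_\lambda(E_n)\geq J_\lambda(E_0)$. (In fact the strong $L^p$-convergence gives outright convergence $J_\lambda(E_n)\to J_\lambda(E_0)$, which is more than enough.) The single step I expect to need the most care is the passage from weak to strong $L^p$-convergence of the gradient parts $w_n$ — extracting that from the constraint equation defining $\cN_\lambda$ and the sign $\lambda<0$ is the heart of the matter, and it is precisely the place where compactness of $\cV\subset L^p$ (forcing $v_n\to v_0$ in $L^p$, which then pins down $w_n$) is indispensable.
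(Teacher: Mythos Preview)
Your approach is essentially the same as the paper's: both exploit that on $\cN_\lambda$ the $\W$-component $w$ is determined by $v$ as the unique minimizer of the strictly convex functional $K(v,\cdot)=-\frac{\lambda}{2}|v+\cdot|_2^2+\frac{1}{p}|v+\cdot|_p^p$, that this minimizer map $v\mapsto \xi(v)$ is continuous from $L^p$ into $\W$, and that the compact embedding $\cV\hookrightarrow L^p$ forces $v_n\to v_0$ in $L^p$, hence $E_n=v_n+\xi(v_n)\to E_0$ in $L^p$, after which weak-to-weak$^*$ continuity and (C3) are immediate. Two small points to clean up: your phrase ``depends continuously on the $L^2$-datum $v_0$'' is a slip --- the continuity you need (and that you correctly invoke in your final paragraph) is with respect to the $L^p$-datum, since $K$ involves $|v+w|_p^p$; and your argument is phrased only for $\lambda<0$, whereas the statement covers $\lambda\le 0$ --- when $\lambda=0$ the strict convexity of $w\mapsto\frac{1}{p}|v+w|_p^p$ still yields a well-defined continuous $\xi$, so no separate treatment is needed.
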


\begin{proof}

Let us define $K:L^p(\Om,\R^3)\times \cW\to\R$ given by
$$K(v,w):=-\frac\la2\int_{\Om} |v+w|^2\,dx
    + \frac1p\int_{\Om}|v+w|^p\,dx$$
for $v\in L^p(\Om,\R^3)$ and $w\in\W$. 
Let $\xi:L^p(\Om,\R^3)\to\cW$ be a map such that
\begin{equation}\label{defOfxi}
K(v,\xi(v))=\min_{w\in \cW} K(v,w)
\end{equation}
for $v\in L^p(\Om,\R^3)$. Since $K(v,\cdot)$ is strictly convex and coercive for $v\in\V$, then $\xi$ is well-defined. We show that $\xi$ is continuous. Take $v_n\to v_0$ in  $L^p(\Om,\R^3)$ and since
$$K(v_n,\xi(v_n))\leq K(v_n,0)$$
then $(\xi(v_n))$ is bounded. We may assume that $\xi(v_n)\weakto w_0$ in $L^p(\Om,\R^3)$ and in $L^2(\Om,\R^3)$. By the weak lower continuity we infer that
$$K(v_0,\xi(v_0))=\lim_{n\to\infty} K(v_n,\xi(v_0))
\geq \liminf_{n\to\infty} K(v_n,\xi(v_n))\geq K(v_0,w_0)\geq K(v_0,\xi(v_0)),$$
hence $\xi(v_0)=w_0$. Then $\xi(v_n)\weakto \xi(v_0)$ in $L^p(\Om,\R^3)$ and $|v_n+\xi(v_n)|_p\to |v_0+\xi(v_0)|_p$. Therefore $\xi(v_n)\to \xi(v_0)$ in $\W$.\\
\indent Note that if $v+w\in\cN_\lambda$, then
$$K(v,w)=\frac12\|\tv\|^2+I_\lambda(v+w)\leq 
\frac12\|\tv\|^2+I_\lambda(v+\psi)=K(v,\psi)$$
for any $\psi\in\W$. Hence $w=\xi(v)$.\\ 
\indent Now suppose that $E_n\weakto E_0$ and $E_n=v_n+w_n\in\cN_\lambda$ for $n\geq 1$. Since $\cV$ embeds compactly into $L^p(\Om,\R^3)$ and $\xi$ is continuous, then, passing to a subsequence, we may assume that $E_n=v_n+\xi(v_n)\to E_0$ in $L^p(\Om,\R^3)$.
Now observe that for $\vp+\psi\in\V\oplus\W$
\begin{eqnarray*}
J'_\lambda(E_n)(\vp+\psi)&=& (E_n,\vp) +\lambda
\int_{\Om} \langle E_n,\vp+\psi\rangle\,dx
    - \int_{\Om}\langle |E_n|^{p-2}E_n,\vp+\psi\rangle\,dx\\
    &\to& J'_\lambda(E_0)(\vp+\psi),
\end{eqnarray*}
which completes the proof of the weak-to-weak$^*$ continuity.
Moreover if $E_0\in\cN_\lambda$, then
$$\liminf_{n\to\infty} J_\lambda(E_n)=\liminf_{n\to\infty}\Big(\frac12-\frac1p\Big)
    \int_{\Om} |E_n|^{p}\,dx=J_\lambda(E_0).$$
%\begin{eqnarray*}
%\liminf_{n\to\infty} J_\lambda(E_n)&=& \liminf_{n\to\infty}\Big(\frac12-\frac1p\Big)
%    \int_{\Om} |E_n|^{p}\,dx\\
%    &\geq& \Big(\frac12-\frac1p\Big)
%    \int_{\Om} |E_0|^{p}\,dx\\
%    &=& J_\lambda(E_0).
%\end{eqnarray*}
\end{proof}

In our problem \eqref{eq:main}, however, $\V$ is not compactly embedded into $L^p(\Om,\R^3)$ in general, and we require the cylindrical symmetry to get (C3) in the subspace $X^{cyl}$.
%in order to deal with critical nonlinearities we need to consider the cylindrical symmetry to get weak-to-weak$^*$ continuity. Namely, recall that \cite{BartschMederski2}[Lemma 6.2] 
%Note that the weak-to-weak$^*$ continuity is crucial in Theorem \ref{ThLink2} b).
Namely we assume that  $\Om$ is $G$-invariant, and  then $J_\lambda$ is $G$-invariant and any $E\in X^G=\V^G\oplus \W^G$ has a unique decomposition $E=E_\tau+E_\rho+E_\zeta$ with summands of the form
\begin{equation*}
E_\tau(x)
 = \al(r,x_3)\begin{pmatrix}-x_2\\x_1\\0\end{pmatrix},\;
E_\rho(x)
 = \be(r,x_3)\begin{pmatrix}x_1\\x_2\\0\end{pmatrix},\;
E_\zeta(x)
 = \ga(r,x_3)\begin{pmatrix}0\\0\\1\end{pmatrix},
\end{equation*}
where $r=\sqrt{x_1^2+x_2^2}$ and $X^G$ consists of all $G$-equivariant vector fields. 
Then we may define the following linear isometry 
\[
S:X^G \to X^G, \quad S(E_\tau+E_\rho+E_\zeta) := E_\tau-E_\rho-E_\zeta
\]
and $J_\lambda$ is invariant under $S$. Similarly as in the subcritical case \cite{BartschMederski1,BartschMederski2},  by the principle of symmetric criticality it is sufficient to look for critical points of $J_\lambda$ constrained to
\[
X^{cyl}:=(X^G)^{S} = \{E\in X^G: S(E)=E\} = \{E\in X^G: E=E_\tau\}\subset\cV.
\]
As above, we find $\nu\geq 0$ such that $-\lambda_\nu^{cyl}<\lambda\leq -\lambda_{\nu-1}^{cyl}$,  $Q$ is positive on $\V^{cyl+}\subset X^{cyl}$ and semi-negative on $\tcV^{cyl}$, where $\cV^{cyl+}= (\tcV^{cyl})^{\perp}$ is the infinite sum of the eigenspaces associated to the eigenvalues $\la_k^{cyl}$ for $k\geq \nu$ and 
 $\tcV^{cyl}$ is the finite sum of the eigenspaces associated to all $\la_k$ for $k<\nu$ and $\tcV^{cyl}=\{0\}$ if $\nu=1$.
Since $\V^{cyl+}\subset \V^{+}$ and $\tcV^{cyl}\subset \tcV$, any $v\in\V^{cyl}$ has the corresponding decomposition $v=v^++\tv $.\\
\indent Note that $J_\lambda|_{X^{cyl}}:X:=X^{cyl}=\V^{cyl+}\oplus \tcV^{cyl}\to\R$ satisfies conditions (A1)-(A4), (B1)-(B3) and (C2) similarly as in Lemma \ref{eq:LemA1A4} and Lemma \ref{eq:LemB2C2}, where $\cN_\lambda^{cyl}$ is given by \eqref{eq:DefOfNcyl},
$X^+:=\V^{cyl+}$, $\tX:=\tcV^{cyl}$ and for $E=v^++\tv\in \V^{cyl+}\oplus \tcV^{cyl}$ we have
\[
\begin{aligned}
J_\lambda(E)
 &= \frac12\int_{\Om}|\curlop v|^2\,dx + \frac\la2\int_{\Om} |v|^2\,dx
    - \frac1p\int_{\Om}|v|^p\,dx\\
    &=\frac12\|v^+\|^2 - I_\lambda(v).
\end{aligned}
\]
%where
%\[
%I_\lambda(v) = -\frac12\|\tv\|^2 - \frac\la2\int_\Om|v|^2\,dx
%          + \frac1p\int_\Om |v|^p\,dx.
%\]
%Hence  $J_\lambda|_{X^{cyl}}$ has the form as in \eqref{EqJ}. Note that the definition of $I_\lambda$ may be different form \eqref{eq:DefOfI}, however we abuse this notation.

\begin{Lem}\label{eq:weaktoweakCritical}
$J'_\lambda$ is weak-to-weak$^*$ continuous in $\V$, hence on $\cN_\lambda^{cyl}$ and (C3) holds for $J_\lambda|_{X^{cyl}}$.
\end{Lem}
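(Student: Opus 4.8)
The plan is to exploit that $X^{cyl}\subset\cV$ and that, unlike on the full space $X=\cV\oplus\cW$, the subspace $\cV$ embeds \emph{compactly} into $L^2(\Om,\R^3)$ (recall that $\cV\subset X_N(\Om)$ embeds compactly into $L^2(\Om,\R^3)$ by \cite{Amrouche}[Theorem 2.8]). So I would start from an arbitrary sequence $E_n\weakto E_0$ in $\cV$ (equivalently in $W^p_0(\curl;\Om)$, since $\cV$ is a closed subspace of it). By $(V)$ the embedding $\cV\hookrightarrow L^p(\Om,\R^3)$ is continuous, hence $E_n\weakto E_0$ in $L^p$; in particular $(E_n)$ is bounded in $L^p$ and $\curlop E_n\weakto\curlop E_0$ in $L^2$. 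From the compact embedding $\cV\hookrightarrow L^2$ I obtain $E_n\to E_0$ in $L^2$ and, passing to a subsequence, $E_n\to E_0$ a.e.\ in $\Om$.

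Next I would pass to the limit in $J_\lambda'(E_n)(\vp+\psi)=\int_\Om\langle\curlop E_n,\curlop\vp\rangle\,dx+\lambda\int_\Om\langle E_n,\vp+\psi\rangle\,dx-\int_\Om\langle|E_n|^{p-2}E_n,\vp+\psi\rangle\,dx$ for $\vp+\psi\in\cV\oplus\cW$ (using $\curlop\psi=0$). The curl term converges by the weak $L^2$-convergence of $\curlop E_n$, and the linear term by the strong $L^2$-convergence of $E_n$. The only real point is the nonlinear term: the sequence $(|E_n|^{p-2}E_n)$ is bounded in $L^{p/(p-1)}(\Om,\R^3)$ (since $\int_\Om\bigl(|E_n|^{p-1}\bigr)^{p/(p-1)}\,dx=\int_\Om|E_n|^p\,dx$) and $|E_n|^{p-2}E_n\to|E_0|^{p-2}E_0$ a.e., so a standard argument (boundedness in the reflexive space $L^{p/(p-1)}$ plus a.e.\ convergence) gives $|E_n|^{p-2}E_n\weakto|E_0|^{p-2}E_0$ in $L^{p/(p-1)}(\Om,\R^3)$; pairing with $\vp+\psi\in L^p(\Om,\R^3)$ then settles this term. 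As the limit $J_\lambda'(E_0)$ does not depend on the subsequence, the full sequence satisfies $J_\lambda'(E_n)\overset{\ast}{\weakto}J_\lambda'(E_0)$ in $X^*$, which is the claimed weak-to-weak$^*$ continuity on $\cV$, hence on $\cN_\lambda^{cyl}\subset X^{cyl}\subset\cV$.

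To complete (C3) I would still verify the weak sequential lower semicontinuity of $J_\lambda$ on $\cN_\lambda^{cyl}$. Here I would use that $\cN_\lambda^{cyl}$ lies inside the usual Nehari manifold — the defining condition $J_\lambda'(E)|_{\R E\oplus\tcV^{cyl}}=0$ forces $J_\lambda'(E)(E)=0$ — so that $J_\lambda(E)=J_\lambda(E)-\tfrac12 J_\lambda'(E)(E)=\bigl(\tfrac12-\tfrac1p\bigr)\int_\Om|E|^p\,dx$ for every $E\in\cN_\lambda^{cyl}$. If $E_n\weakto E_0$ with $E_n,E_0\in\cN_\lambda^{cyl}$, then $E_n\weakto E_0$ in $L^p(\Om,\R^3)$, and weak lower semicontinuity of $v\mapsto|v|_p^p$ yields $\liminf_{n\to\infty}J_\lambda(E_n)=\bigl(\tfrac12-\tfrac1p\bigr)\liminf_{n\to\infty}|E_n|_p^p\ge\bigl(\tfrac12-\tfrac1p\bigr)|E_0|_p^p=J_\lambda(E_0)$.

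The main obstacle is the nonlinear term in the critical regime $p=2^*=6$, where strong $L^p$-convergence of $(E_n)$ is unavailable (the embedding $\cV\subset L^6$ is not compact, as shown in Remark \ref{RemMainRes}), and it is not recovered by interpolating the strong $L^2$-convergence with the $L^p$-bound either. The way out is that strong $L^p$-convergence is not needed: it suffices that $|E_n|^{p-2}E_n\weakto|E_0|^{p-2}E_0$ \emph{weakly} in $L^{p/(p-1)}$, and this follows from a.e.\ convergence together with $L^p$-boundedness. This is precisely why the cylindrical symmetry is indispensable: it confines us to $\cV$, on which weak convergence does produce a.e.\ convergence through the compact $L^2$-embedding, whereas on $\cW$ it does not (gradient fields may oscillate), so on the full space $X$ the sequence $|E_n|^{p-2}E_n$ need not converge a.e.\ and $J_\lambda'$ genuinely fails to be weak-to-weak$^*$ continuous there.
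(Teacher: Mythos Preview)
Your proof is correct and follows essentially the same route as the paper: compact embedding $\cV\hookrightarrow L^2$ yields a.e.\ convergence along a subsequence, which together with the $L^p$-bound handles the nonlinear term, and on $\cN_\lambda^{cyl}$ the identity $J_\lambda(E)=\bigl(\tfrac12-\tfrac1p\bigr)|E|_p^p$ reduces the lower semicontinuity to that of the $L^p$-norm. The only cosmetic difference is that the paper phrases the passage to the limit in the nonlinear term via uniform integrability of $\langle|E_n|^{p-2}E_n,\vp\rangle$ (Vitali), whereas you argue that $|E_n|^{p-2}E_n\weakto|E_0|^{p-2}E_0$ in $L^{p'}$ from boundedness plus a.e.\ convergence; these are equivalent standard arguments.
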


\begin{proof}
\indent Suppose that $E_n\weakto E_0$ in $\V$. Since $\cV$ embeds compactly into $L^{2}(\Om,\R^3)$, and in view of (V), and $\cV$ embeds continuously $L^{p}(\Om,\R^3)$, then, passing to a subsequence, we may assume that $E_n\to E_0$ in  $L^{2}(\Om,\R^3)$ and $E_n(x)\to E_0(x)$ for a.e. $x\in\Om$.
Now observe that for $\vp\in\V$, the family $(\langle |E_n|^{p-2}E_n,\vp\rangle)$ is uniformly integrable and one obtains
\begin{eqnarray*}
J'_\lambda(E_n)(\vp)&=& (E_n,\vp) +\lambda
\int_{\Om} \langle E_n,\vp\rangle\,dx
    - \int_{\Om}\langle |E_n|^{p-2}E_n,\vp\rangle\,dx\\
    &\to& J'_\lambda(E_0)(\vp),
\end{eqnarray*}
which completes the proof of the weak-to-weak$^*$ continuity.
Moreover if $E_0\in\cN_\lambda^{cyl}$, then
\begin{eqnarray*}
\liminf_{n\to\infty} J_\lambda(E_n)&=& \liminf_{n\to\infty}\Big(\frac12-\frac1p\Big)
    \int_{\Om} |E_n|^{p}\,dx\\
    &\geq& \Big(\frac12-\frac1p\Big)
    \int_{\Om} |E_0|^{p}\,dx\\
    &=& J_\lambda(E_0),
\end{eqnarray*}
so that (C3) holds.
\end{proof}

\subsection{Compactly perturbed problem and proof of Theorem \ref{thm:main}}

We take $X^0 :=  \tcV$, $X^1 := \cW$ and let us
consider the functional $J_{cp}:X=\cV\oplus\cW\to\R$ given by
\begin{eqnarray}\label{eq:defOfJ0}
J_{cp}(E)&=&\frac12\int_\Om |\curlop E|^2\, dx + \frac{\lambda}{2}\int_{\Om} |w|^2\, dx-\frac{1}{p}\int_\Om |E|^p\, dx\\\nonumber
&=&\frac12\|v\|^2-I_{cp}(E),\hbox{ for }E=v+w\in\cV\oplus\W,
\end{eqnarray}
where $I_{cp}(E)=-\frac{\lambda}{2}\int_{\Om} |w|^2\, dx+\frac{1}{p}\int_\Om |E|^p\, dx$. Moreover we define the corresponding Nehari-Pankov manifold
$$\cN_{cp}=\{E\in (\cV\oplus\W)\setminus \W:\; J_{cp}'(E)|_{\R E\oplus\W}=0\}.$$
%We can repeat the similar arguments as in Lemma \ref{eq:LemB3check} and show that 
%\begin{equation}\label{eq:LemB3checkeq0}
%J_{cp}(E)\geq J_{cp}(tE+w) -J'_{cp}(E)\left(\frac{t^2-1}{2}E+tw\right).
%\end{equation}
%for any $E\in \V\oplus\W$ and $w\in\W$. 
Observe that as in Lemma \ref{eq:LemA1A4} and in Lemma \ref{eq:LemB2C2} we show that $J_{cp}$ satisfies the corresponding conditions (A1)-(A4) and (B1)-(B3). Moreover if $E\in \V\oplus \W$, $w\in \W$ and $t\geq 0$ then
\begin{equation}\label{eq:LemB3checkeqCP}
J_{cp}(E)\geq J_{cp}(tE+w) -J'_{cp}(E)\left(\frac{t^2-1}{2}E+tw\right).
\end{equation}
Since for $E=v+w$ we have
$$J_\lambda(E)-J_{cp}(E)=\frac{\lambda}{2}\int_{\Om}|v|^2\,dx,$$
and $\V$ is compactly embedded into $L^2(\Om,\R^3)$, then we easily show that condition (C1) holds.

\begin{Lem}\label{eq:lemCoercive}
$J_\lambda$ is coercive on $\cN_\lambda$ and $J_{cp}$ is coercive on $\cN_{cp}$.
\end{Lem}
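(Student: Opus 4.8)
The plan is to exploit the fact that on each Nehari--Pankov manifold the functional equals a fixed positive multiple of $|E|_p^p$, and then to upgrade a bound on $|E|_p$ to a bound on $\|E\|$ using the Euler--Lagrange relations that hold on the constraint. For $J_\lambda$: since $E\in\R E\subset\R E\oplus\tcV\oplus\cW$, every $E=v+w\in\cN_\lambda$ satisfies $J'_\lambda(E)(E)=0$, that is $|\curlop v|_2^2+\lambda|E|_2^2=|E|_p^p$; subtracting $\tfrac1p J'_\lambda(E)(E)$ from $J_\lambda(E)$ gives $J_\lambda(E)=\tfrac{p-2}{2p}|E|_p^p$. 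Hence coercivity on $\cN_\lambda$ reduces to showing that any sequence $(E_n)\subset\cN_\lambda$ with $\sup_n|E_n|_p<\infty$ is bounded in $X$.

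For that implication I would chain a few elementary estimates. Since $p\ge2$ and $\Om$ is bounded, $|E_n|_2\le\mu(\Om)^{1/2-1/p}|E_n|_p$ is bounded; then, because $\lambda\le0$, $\|v_n\|^2=|\curlop v_n|_2^2=|E_n|_p^p-\lambda|E_n|_2^2$ is bounded; the embedding assumption $(V)$ gives $|v_n|_p\le C\|v_n\|$, so that $|w_n|_p\le|E_n|_p+|v_n|_p$ is bounded; and finally $\|E_n\|^2=\|v_n\|^2+|w_n|_p^2$ is bounded. Contrapositively, $\|E_n\|\to\infty$ forces $|E_n|_p\to\infty$, hence $J_\lambda(E_n)=\tfrac{p-2}{2p}|E_n|_p^p\to\infty$, which is the desired coercivity.

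For $J_{cp}$ on $\cN_{cp}$ the same identity $J_{cp}(E)=\tfrac{p-2}{2p}|E|_p^p$ holds, now from $J'_{cp}(E)(E)=0$, which reads $\|v\|^2+\lambda|w|_2^2=|E|_p^p$; again it suffices to bound a sequence with $|E_n|_p$ bounded. The extra point is that the norm on $\cW$ is the $L^p$-norm, so $\|v_n\|^2=|E_n|_p^p-\lambda|w_n|_2^2$ does not by itself control $\|v_n\|$, since $|w_n|_2$ is not dominated by $|w_n|_p$. To handle this I would use the additional relation $J'_{cp}(E)(w)=0$, available because $w\in\cW\subset\R E\oplus\cW$; it gives $\lambda|w|_2^2=\int_\Om\langle|E|^{p-2}E,w\rangle\,dx$, whence $|\lambda|\,|w|_2^2\le|E|_p^{p-1}|w|_p$ by Hölder. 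Inserting $|w|_p\le|E|_p+C\|v\|$ together with $\|v\|\le|E|_p^{p/2}+|\lambda|^{1/2}|w|_2$ (from $\|v\|^2=|E|_p^p+|\lambda|\,|w|_2^2$) turns this into a quadratic inequality $|\lambda|\,|w_n|_2^2\le A+B|w_n|_2$ whose constants $A,B$ stay bounded as long as $|E_n|_p$ does; this bounds $|w_n|_2$, then $\|v_n\|$, then $|w_n|_p$, then $\|E_n\|$. When $\lambda=0$ the term $\lambda|w|_2^2$ is absent and $\|v_n\|^2=|E_n|_p^p$ is bounded outright, so this step is vacuous. As before, $\|E_n\|\to\infty$ then yields $J_{cp}(E_n)\to\infty$, and the coercivity of $J_\lambda$ restricted to $\cN_\lambda^{cyl}$ follows by the same argument (and is simpler, there being no $\cW$-component there).

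The routine parts are just the Hölder and embedding inequalities; the one step I expect to require genuine care is the bootstrap bound on $|w_n|_2$ in the $J_{cp}$ case, where the mismatch between the $L^p$-metric on $\cW$ and the $L^2$-quantity occurring in $J_{cp}$ forces one to invoke the Euler--Lagrange relation $J'_{cp}(E)(w)=0$ rather than a norm equivalence.
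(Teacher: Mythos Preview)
Your argument for $J_\lambda$ is correct and is essentially the contrapositive form of the paper's proof: both rest on the identity $J_\lambda(E)=\tfrac{p-2}{2p}\,|E|_p^p$ for $E\in\cN_\lambda$ and then show that a bound on $|E|_p$ forces a bound on $\|E\|$.

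For $J_{cp}$ your bootstrap via $J'_{cp}(E)(w)=0$ also goes through, but it is a detour, and the reason you give for needing it is misstated. On the bounded domain $\Om$ with $p>2$, H\"older gives $|w|_2\le\mu(\Om)^{(p-2)/(2p)}|w|_p$, so $|w_n|_2$ \emph{is} dominated by $|w_n|_p$; what you are actually missing at that stage is a bound on $|w_n|_p$ (or on $|w_n|_2$) in terms of $|E_n|_p$. The paper obtains this in one stroke from the continuity of the projection $\overline{\cV}\oplus\cW\to\cW$ in $L^p$, i.e.\ $|w_n|_p\le C|E_n|_p$, and then runs the same two-line computation as for $J_\lambda$. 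Equivalently, the $L^2$-orthogonality of $\cV$ and $\cW$ gives $|w_n|_2\le|E_n|_2\le C|E_n|_p$ directly, after which $\|v_n\|^2=|E_n|_p^p+|\lambda|\,|w_n|_2^2$ is bounded and the rest follows as in your $J_\lambda$ argument. Either route makes the extra Euler--Lagrange relation and the quadratic bootstrap unnecessary, although your version is valid as written.
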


\begin{proof}
Let $E_n=v_n+w_n\in \cN_\lambda$ and suppose that $\|E_n\|\to\infty$. Observe that
$$J_\lambda(E_n)=J_\lambda(E_n)-\frac{1}{2}J'_\lambda(E_n)(E_n)=\Big(\frac12-\frac1p\Big)
|E_n|^p_p\geq C_1 |w_n|_p^p$$
for some constant $C_1>0$, since $\W$ is closed, $\cl\V\cap\W=\{0\}$ in $L^p(\Om,\R^3)$ and the projection $\cl\V\oplus\W$ onto $\W$ is continuous. Hence, if $|E_n|_p\to\infty$, then $J_\lambda(E_n)\to\infty$ as $n\to\infty$. Suppose that $|E_n|_p$ is bounded. Then $\|v_n\|\to\infty$ and 
\begin{eqnarray*}
J_\lambda(E_n)=J_\lambda(E_n)-\frac{1}{p}J'_\lambda(E_n)(E_n)&=&\Big(\frac12-\frac1p\Big)
\Big(\int_{\Om}|\curlop v_n|^2\,dx + \lambda\int_{\Om} |v_n+w_n|^2\,dx
\Big)\\
&\geq&\Big(\frac12-\frac1p\Big)
\Big(\int_{\Om}|\curlop v_n|^2\,dx + \lambda C_2|E_n|^2_p
\Big),
\end{eqnarray*}
for some constant $C_2>0$.
Thus $J_\lambda(E_n)\to\infty$. Similarly we show that $J_{cp}$ is coercive on $\cN_{cp}$.
\end{proof}

\begin{Lem}\label{LemEstimateLevels}
The following inequalities hold
\begin{eqnarray*}
c_\lambda&=&\inf_{\cN_\lambda}J_\lambda \leq \Big(\frac12-\frac1p\Big)(\lambda+\la_{\nu})^{\frac{p}{p-2}}\mu(\Om),\\
%c_\lambda&\leq &c_0,\\
c_0&\geq& d_\lambda:=\inf_{\cN_{cp}}J_{cp}\geq \Big(\frac12-\frac1p\Big)S^{\frac{p}{p-2}}.
\end{eqnarray*}
\end{Lem}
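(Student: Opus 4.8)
The plan is to prove the upper bound for $c_\lambda$ by exhibiting an explicit test configuration on the Nehari–Pankov manifold, and the lower bound for $d_\lambda$ by unwinding the definition of $J_{cp}$ together with the variational characterization of $S$; the inequality $c_0\ge d_\lambda$ is essentially immediate from condition (C1) or from a direct comparison of the two functionals at $\lambda=0$. For the upper estimate, first I would pick a nonzero eigenfunction $v_\nu\in\cV$ of the curl-curl operator associated with the eigenvalue $\lambda_\nu$, normalized so that $|v_\nu|_p=1$ (using $(V)$ so that $v_\nu\in L^p$). Then I would evaluate $J_\lambda$ along the ray $\R^+ v_\nu\oplus(\tcV\oplus\W)$: by Proposition~\ref{prop:nehari} there is a unique maximizer $n(v_\nu/\|v_\nu\|)$ on this ray lying in $\cN_\lambda$, so $c_\lambda\le \max_{t\ge 0,\ z\in\tcV\oplus\W} J_\lambda(tv_\nu+z)$. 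Since $v_\nu$ is orthogonal to $\tcV$ and to $\W$, and $Q(v_\nu)=(\lambda_\nu+\lambda)|v_\nu|_2^2$, dropping the (nonpositive-direction) term in $z$ and maximizing the scalar function $t\mapsto \tfrac12 t^2(\lambda_\nu+\lambda)|v_\nu|_2^2-\tfrac1p t^p|v_\nu|_p^p$ in $t$ gives, after the elementary one-variable optimization, a bound of the form $\big(\tfrac12-\tfrac1p\big)(\lambda+\lambda_\nu)^{p/(p-2)}|v_\nu|_2^{2p/(p-2)}|v_\nu|_p^{-2p/(p-2)}$. Choosing $v_\nu$ suitably (or using $|v_\nu|_2^2\le \mu(\Om)^{(p-2)/p}|v_\nu|_p^2$ by Hölder) yields the stated $\big(\tfrac12-\tfrac1p\big)(\lambda+\lambda_\nu)^{p/(p-2)}\mu(\Om)$; the point is simply that the quadratic form is controlled on the top eigenspace and the $L^p$-norm dominates the $L^2$-norm via $\mu(\Om)$.

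For the lower estimate, take any $E=v+w\in\cN_{cp}$. From the defining relation $J_{cp}'(E)|_{\R E\oplus\W}=0$ one has in particular $J_{cp}'(E)(E)=0$, so that, exactly as in the coercivity computation of Lemma~\ref{eq:lemCoercive}, $J_{cp}(E)=J_{cp}(E)-\tfrac12 J_{cp}'(E)(E)=\big(\tfrac12-\tfrac1p\big)|E|_p^p$. On the other hand, testing the Euler–Lagrange condition against directions in $\W$ and using that $I_{cp}$ restricted to the $w$-variable is convex (indeed $\xi$ minimizes the relevant functional, cf. the argument in Lemma~\ref{eq:weaktoweakSubcritical}), one shows $\|v\|^2=|\curlop E|_2^2 = |E|_p^p$ after also using $J_{cp}'(E)(E)=0$ and $\curlop w=0$. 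Hence $|\curlop E|_2^2=|E|_p^p\ge |v|_p^p$ (again since the projection onto $\W$ in $L^p$ is bounded, so $|v|_p\le C|E|_p$; more carefully one uses $|E|_p\ge c|v|_p$ and the reverse is not needed), and plugging $|v|_p=|E|_p(1+o(1))$ is not sharp enough, so instead I would argue directly: by definition of $S=\inf_{|v|_p=1,\,v\in\V}|\curlop v|_2^2$ we get $|\curlop v|_2^2\ge S|v|_p^2$. Combining $|\curlop E|_2^2=|\curlop v|_2^2\ge S|v|_p^2$ with $|\curlop E|_2^2=|E|_p^p$ and $|E|_p\ge |v|_p$ (or the cleaner relation obtained from the $\W$-component of the critical point equation, which forces $|E|_p^{p-2}$ to bound things from below) yields $|E|_p^{p-2}\ge S$, whence $J_{cp}(E)=\big(\tfrac12-\tfrac1p\big)|E|_p^p\ge\big(\tfrac12-\tfrac1p\big)S^{p/(p-2)}$, and taking the infimum over $\cN_{cp}$ gives $d_\lambda\ge\big(\tfrac12-\tfrac1p\big)S^{p/(p-2)}$. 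Finally, $c_0\ge d_\lambda$ follows because at $\lambda=0$ the two functionals $J_0$ and $J_{cp}$ coincide (the term $\tfrac\lambda2|v|_2^2$ in $J_0-J_{cp}$ vanishes), and $\cN_0\subset\cN_{cp}$ up to the standard reparametrization on rays, so $\inf_{\cN_0}J_0\ge\inf_{\cN_{cp}}J_{cp}$; alternatively invoke the second part of (C1) with $M$ and $\lambda=0$.

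The main obstacle, I expect, is pinning down the exact constant in the two estimates rather than a constant of the same order: for the upper bound this means being careful that evaluating $J_\lambda$ along the eigenfunction ray and discarding the $\tcV\oplus\W$ part genuinely does not lose anything in the leading term, and that the Hölder step $|v_\nu|_2^2\le\mu(\Om)^{(p-2)/p}|v_\nu|_p^2$ is applied in the right direction; for the lower bound the delicate point is extracting the clean identity relating $|\curlop E|_2^2$ and $|E|_p^p$ (equivalently $|v|_p$) from the full Nehari–Pankov condition $J_{cp}'(E)|_{\R E\oplus\W}=0$, since one must use both the $\R E$-direction and the $\W$-directions of the Euler–Lagrange system and then feed in the sharp definition of $S$. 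Both are otherwise routine one-variable optimizations and applications of results already established (Proposition~\ref{prop:nehari}, Lemma~\ref{eq:LemB3check}, Lemma~\ref{eq:lemCoercive}).
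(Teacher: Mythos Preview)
Your plan has a genuine gap in the upper bound, and the lower bound argument is muddled in a way that would not go through as written.

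\textbf{Upper bound.} You correctly write $c_\lambda \le \max_{t\ge0,\,z\in\tcV\oplus\W} J_\lambda(tv_\nu+z)$, but then propose ``dropping the (nonpositive-direction) term in $z$'' to reduce to the one-variable problem $t\mapsto\tfrac12 t^2(\lambda_\nu+\lambda)|v_\nu|_2^2-\tfrac1p t^p|v_\nu|_p^p$. Dropping $z$ gives a \emph{lower} bound on the max, not an upper bound: the maximizer $n(v_\nu/\|v_\nu\|)$ in general has a nonzero $\tcV\oplus\W$-component (since $|v_\nu|^{p-2}v_\nu$ need not be orthogonal to $\W$), and while the quadratic part in $z$ is nonpositive, the term $-\tfrac1p|tv_\nu+z|_p^p$ is coupled and cannot be replaced by $-\tfrac1p|tv_\nu|_p^p$. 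The paper does \emph{not} discard $z$: writing $v=te_\nu+\tv$, it bounds
\[
\tfrac{\lambda_\nu}{2}|te_\nu|_2^2+\tfrac12\|\tv\|^2 \le \tfrac{\lambda_\nu}{2}|v|_2^2 \le \tfrac{\lambda_\nu}{2}|v+w|_2^2
\]
(using $\|\tv\|^2\le\lambda_\nu|\tv|_2^2$ and $L^2$-orthogonality), so that $J_\lambda(te_\nu+\tv+w)\le\tfrac{\lambda+\lambda_\nu}{2}|v+w|_2^2-\tfrac1p|v+w|_p^p$. Then H\"older and the scalar inequality $\tfrac{A}{2}s^2-\tfrac1p s^p\le(\tfrac12-\tfrac1p)A^{p/(p-2)}$ with $s=|v+w|_p$ give the result. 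The key is that the \emph{same} quantity $v+w$ appears in both the quadratic bound and the $L^p$ term, so no comparison between $|tv_\nu|_p$ and $|v+w|_p$ is ever needed.

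\textbf{Lower bound and $c_0\ge d_\lambda$.} You claim $\|v\|^2=|E|_p^p$, but $J_{cp}'(E)(E)=0$ only gives $\|v\|^2+\lambda|w|_2^2=|E|_p^p$; combining with the $\W$-direction yields $\|v\|^2=\int|E|^{p-2}\langle E,v\rangle$, not $|E|_p^p$. You also invoke $|E|_p\ge|v|_p$, which is neither obvious nor the direction one actually obtains. The paper bypasses this entirely via \eqref{eq:LemB3checkeqCP}: since $E\in\cN_{cp}$, taking $w'=-tw$ gives $J_{cp}(E)\ge J_{cp}(tv)=\tfrac{t^2}{2}\|v\|^2-\tfrac{t^p}{p}|v|_p^p\ge\tfrac{t^2}{2}S|v|_p^2-\tfrac{t^p}{p}|v|_p^p$ for all $t>0$, and optimizing in $t$ gives $(\tfrac12-\tfrac1p)S^{p/(p-2)}$ on the nose. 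Finally, $J_0$ and $J_{cp}$ do \emph{not} coincide for $\lambda<0$ (their difference on $E=v+w$ is $-\tfrac{\lambda}{2}|w|_2^2$), and $\cN_0\not\subset\cN_{cp}$; instead, for $E\in\cN_0$ one finds $t>0$, $w\in\W$ with $tE+w\in\cN_{cp}$ and chains $J_0(E)\ge J_0(tE+w)\ge J_{cp}(tE+w)\ge d_\lambda$.
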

\begin{proof}
Let $e_\nu\in \V^+$ be an eigenvector corresponding to $\lambda_\nu$.
Then $te_\nu+\tv+w\in\cN_\lambda$ for some $t>0$, $\tv\in\tcV$ and $w\in\W$. Let $v=te_\nu+\tv$ and observe that
\begin{eqnarray*}
c_\lambda&\leq&J_\lambda(te_\nu+\tv+w)\\
&=&
\frac{\la_{\nu}}{2}\int_{\Om}|te_\nu|^2\, dx +\frac{1}{2}\int_{\Om}|\curlop \tv|^2\,dx + \frac\la2\int_{\Om}|v+w|^2\,dx
    - \frac1p\int_{\Om}|v+ w|^p\,dx\\
&\leq& 
\frac{\la_{\nu}}{2}\int_{\Om}|v|^2\, dx + \frac\la2\int_{\Om}|v+w|^2\,dx
    - \frac1p\int_{\Om}|v+w|^p\,dx\\
&\leq& 
%\frac{\lambda+\la_{n}}{2}\int_{\Om}|v|^2\, dx + \frac\la2\int_{\Om}|w|^2\,dx
 %   - \frac{1}{p}\int_{\Om}|v+w|^p\,dx\\
%&=& 
\frac{\lambda+\la_{\nu}}{2}\int_{\Om}|v+w|^2\,dx
%-\frac{\la_n}{2}\int_{\Om}|w|^2\,dx
    - \frac{1}{p}\int_{\Om}|v+w|^p\,dx\\
    &\leq& 
\frac{\lambda+\la_{\nu}}{2}\mu(\Om)^{\frac{p-2}{p}}\Big(\int_{\Om}|v+w|^p\, dx\Big)^{\frac{2}{p}}
    - \frac{1}{p}\int_{\Om}|v+w|^p\,dx\\
&\leq& \Big(\frac12-\frac1p\Big)(\lambda+\la_{\nu})^{\frac{p}{p-2}}\mu(\Om),
\end{eqnarray*}
where the last inequality follows from the following inequality $\frac{A}{2}t^2-\frac{1}{p}t^p\leq \big(\frac12-\frac1p\big)A^{\frac{p}{p-2}}$ for $t\geq 0$ and $A>0$.
Now let
 $E=v+w\in\cN_{cp}$. Note that by \eqref{eq:LemB3checkeqCP}
we show the first inequality
$$J_{cp}(E)\geq J_{cp}(tv)
\geq \frac12 t^2 S\Big(\int_{\Om}|v|^p\, dx\Big)^{\frac{2}{p}}-\frac1p t^p \int_{\Om}|v|^p\, dx\geq \Big(\frac12-\frac1p\Big)S^{\frac{p}{p-2}},$$
and the last one is obtained by taking
$$t:=S^{\frac{1}{p-2}}\Big(\int_{\Om}|v|^p\, dx\Big)^{-\frac{1}{p}}>0.$$
Thus $d_\lambda\geq \Big(\frac12-\frac1p\Big)S^{\frac{p}{p-2}}$. Moreover if $E\in\cN_0$ then there are $t>0$ and $w\in \W$ such that $tE+w\in\cN_{cp}$ and $J_0(E)\geq J_0(tE+w)\geq J_{cp}(tE+w)\geq d_\lambda$. Therefore $c_0\geq d_\lambda$.
\end{proof}

\begin{altproof}{Theorem \ref{thm:main}}
Note that if $\lambda+\lambda_\nu<S\mu(\Om)^{\frac{2-p}{p}}$, then by Lemma \ref{LemEstimateLevels} we get $c_\lambda<d_\lambda\leq c_0$ and statement a) follows from Theorem \ref{ThLink2} a).  Since $J_\lambda$ satisfies \eqref{eq:thastract2eq1}, by Theorem \ref{ThContinuityuOfClambda},  the function $(-\lambda_{\nu},-\lambda_{\nu-1}]\ni\lambda\mapsto c_\lambda\in (0,+\infty)$ is non-decreasing and continuous. Suppose that $-\lambda_{\nu}<\mu_1<\mu_2\leq-\lambda_{\nu-1}$, $c_{\mu_1}=c_{\mu_2}$ and $c_\lambda$ is attained for some $\lambda\in (\mu_1,\mu_2]$. Then, similarly as in \eqref{eq:Abstractest1}, we show that $c_\lambda>c_{\mu_1}$, which is a contradiction. In view of Lemma \ref{LemEstimateLevels}, we infer that $c_\lambda\to 0$ as $\lambda\to\lambda_\nu^-$.
\end{altproof}

\subsection{Compactly perturbed problem and proof of Theorem \ref{thm:main2}}

In the cylindrically symmetric case, we take $X^0 :=  \tcV^{cyl}$, $X^1 := \{0\}$ and
we consider the  compactly perturbed functional $J_{cp}|_{X^{cyl}}=J_0|_{X^{cyl}}:X^{cyl}\to\R$, so that $J_{cp}|_{X^{cyl}}$ is independent on $\lambda$ and
$$
J_{cp}(E)=J_{0}(E)
=\frac12\|E\|^2-I_{cp}(E),\hbox{ for }E\in X^{cyl}\subset \cV,
$$
where $I_{cp}(E)=I_0(E)=\frac{1}{p}\int_\Om |E|^p\, dx$. We consider the corresponding Nehari-Pankov manifold
$$\cN_{cp}:=\cN_0^{cyl}=\{E\in X^{cyl}\setminus \{0\}:\; J_{cp}'(E)|_{\R E}=0\},$$
which coincides with the usual Nehari manifold of $J_0|_{X^{cyl}}$.
Similarly as in Lemma \ref{eq:LemA1A4} and in Lemma \ref{eq:LemB2C2} we check that $J_{cp}|_{X^{cyl}}$ satisfies the corresponding conditions (A1)-(A4) and (B1)-(B3).\\ %Moreover $J_\lambda$ is coercive on $\cN_\lambda^{cyl}$.\\
\indent Since for $E\in X^{cyl}\subset\V$ we have
$$J_\lambda(E)-J_{cp}(E)=\frac{\lambda}{2}\int_{\Om}|E|^2\,dx,$$
and $\V$ is compactly embedded into $L^2(\Om,\R^3)$, then we easily show that condition (C1) holds. Observe that $J_\lambda$ is also coercive on $\cN_\lambda^{cyl}$ and the similar estimates as in Lemma \ref{LemEstimateLevels} are satisfied.

\begin{Lem}\label{LemEstimateLevelsCyl}
The following inequalities hold
\begin{eqnarray*}
 c_\lambda^{cyl}&=&\inf_{\cN_\lambda^{cyl}}J_\lambda\leq \Big(\frac12-\frac1p\Big)(\lambda+\la_{\nu}^{cyl})^{\frac{p}{p-2}}\mu(\Om)\\
c_0^{cyl}&:=&\inf_{\cN_{0}^{cyl}}J_{0}\geq \Big(\frac12-\frac1p\Big)S^{\frac{p}{p-2}}.
\end{eqnarray*}
\end{Lem}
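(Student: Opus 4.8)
The plan is to follow the same two-part scheme used in Lemma~\ref{LemEstimateLevels}, simply restricting every construction to the symmetric subspace $X^{cyl}$, and checking that none of the estimates there used anything special about the full space that fails in $X^{cyl}$.

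First I would prove the upper bound for $c_\lambda^{cyl}$. Let $e_\nu^{cyl}\in\V^{cyl+}$ be an eigenvector of $\curlop(\curlop\cdot)$ in $X^{cyl}$ corresponding to the eigenvalue $\lambda_\nu^{cyl}$. By Proposition~\ref{prop:nehari} applied to $J_\lambda|_{X^{cyl}}$ there are $t>0$ and $\tv\in\tcV^{cyl}$ with $te_\nu^{cyl}+\tv\in\cN_\lambda^{cyl}$. Writing $v=te_\nu^{cyl}+\tv$ we have $\curlop(\curlop\tv)$ acting with eigenvalues $\le\lambda_\nu^{cyl}$ on $\tv$, hence $\int_\Om|\curlop\tv|^2\,dx\le\lambda_\nu^{cyl}\int_\Om|\tv|^2\,dx$ and (using orthogonality of $e_\nu^{cyl}$ and $\tcV^{cyl}$ and that $\lambda_\nu^{cyl}$ is the smallest eigenvalue on $\V^{cyl+}$) $\tfrac12\int_\Om|\curlop v|^2\,dx\le\tfrac{\lambda_\nu^{cyl}}{2}\int_\Om|v|^2\,dx$. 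Therefore
\[
c_\lambda^{cyl}\le J_\lambda(v)\le\frac{\lambda+\lambda_\nu^{cyl}}{2}\int_\Om|v|^2\,dx-\frac1p\int_\Om|v|^p\,dx\le\frac{\lambda+\lambda_\nu^{cyl}}{2}\mu(\Om)^{\frac{p-2}{p}}\Big(\int_\Om|v|^p\,dx\Big)^{\frac2p}-\frac1p\int_\Om|v|^p\,dx,
\]
and the elementary inequality $\tfrac A2 t^2-\tfrac1p t^p\le(\tfrac12-\tfrac1p)A^{\frac p{p-2}}$ with $A=\lambda+\lambda_\nu^{cyl}$ and $t=\big(\int_\Om|v|^p\big)^{1/p}$ gives $c_\lambda^{cyl}\le(\tfrac12-\tfrac1p)(\lambda+\lambda_\nu^{cyl})^{\frac p{p-2}}\mu(\Om)$. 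Here note $\lambda+\lambda_\nu^{cyl}>0$ by the standing hypothesis $-\lambda_\nu^{cyl}<\lambda$, so $A>0$ as required.

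For the lower bound on $c_0^{cyl}$, take $E\in\cN_0^{cyl}$, $E\ne0$. Since $J_0|_{X^{cyl}}$ has the form $\tfrac12\|E\|^2-\tfrac1p\int_\Om|E|^p$ and $E$ lies on the (ordinary) Nehari manifold of $J_0|_{X^{cyl}}$, we have $\|E\|^2=\int_\Om|E|^p\,dx$, so $J_0(E)=(\tfrac12-\tfrac1p)\int_\Om|E|^p\,dx$. Alternatively, and more in the spirit of the proof of Lemma~\ref{LemEstimateLevels}, I would use that $E$ is the maximum of $t\mapsto J_0(tE)$ on $[0,\infty)$ together with the definition
\[
S=\inf_{|v|_p=1,\,v\in\V}\int_\Om|\curlop v|^2\,dx,
\]
which applies since $X^{cyl}\subset\V$: one gets $\|E\|^2=\int_\Om|\curlop E|^2\,dx\ge S\big(\int_\Om|E|^p\big)^{2/p}$, hence $J_0(E)=\max_{t\ge0}\big(\tfrac{t^2}{2}\|E\|^2-\tfrac{t^p}{p}\int_\Om|E|^p\big)\ge\max_{t\ge0}\big(\tfrac{t^2}{2}S\sigma^{2/p}-\tfrac{t^p}{p}\sigma\big)$ with $\sigma=\int_\Om|E|^p\,dx$, and optimizing in $t$ (take $t=S^{1/(p-2)}\sigma^{-1/p}$) yields $J_0(E)\ge(\tfrac12-\tfrac1p)S^{\frac p{p-2}}$. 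Taking the infimum over $\cN_0^{cyl}$ gives $c_0^{cyl}\ge(\tfrac12-\tfrac1p)S^{\frac p{p-2}}$.

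I do not expect a genuine obstacle here: the statement is the symmetric counterpart of Lemma~\ref{LemEstimateLevels}, and the only point requiring a word of care is that the constant $S$ is defined as an infimum over all of $\V$, not over $X^{cyl}$, so it bounds $\int_\Om|\curlop v|^2\,dx$ from below for $v\in X^{cyl}\subset\V$ just as well; and that $\tcV^{cyl}\subset\tcV$ together with $\V^{cyl+}\subset\V^+$ makes the eigenvalue estimates on the symmetric subspace immediate. Since in this cylindrical setting $X^1=\{0\}$ and $\cN_{cp}=\cN_0^{cyl}$ is the ordinary Nehari manifold, the second inequality is in fact slightly simpler than in Lemma~\ref{LemEstimateLevels} because there is no $\W$-component to project away.
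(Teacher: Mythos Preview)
Your proposal is correct and follows essentially the same approach as the paper, which does not give a separate proof of Lemma~\ref{LemEstimateLevelsCyl} but indicates that the argument of Lemma~\ref{LemEstimateLevels} carries over verbatim to the symmetric setting. Your remarks that $X^{cyl}\subset\cV$ (so the constant $S$ still applies) and that the absence of a $\cW$-component makes the second estimate slightly simpler are exactly the right observations; the only trivial slip is that in applying $\tfrac{A}{2}t^2-\tfrac1pt^p\le(\tfrac12-\tfrac1p)A^{p/(p-2)}$ you should take $A=(\lambda+\lambda_\nu^{cyl})\mu(\Om)^{(p-2)/p}$ rather than $A=\lambda+\lambda_\nu^{cyl}$, but the resulting bound is the one you state.
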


We show that the Palais-Smale condition in $\cN_\lambda^{cyl}$ holds for sufficiently small $\beta$ in the norm topology, hence in the corresponding $\cT$-topology .

\begin{Lem}\label{LemPScond}
$J_\lambda|_{X^{cyl}}$ satisfies the $(PS)_\beta^{\cT}$-condition in $\cN_\lambda^{cyl}$ for 
$$\beta<\Big(\frac12-\frac1p\Big)S^{\frac{p}{p-2}}.$$
\end{Lem}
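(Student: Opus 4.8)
**Proof plan for Lemma~\ref{LemPScond} (the $(PS)_\beta^{\cT}$-condition in $\cN_\lambda^{cyl}$ below the critical threshold).**

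The plan is to take a $(PS)_\beta$-sequence $(E_n)\subset\cN_\lambda^{cyl}$, establish its boundedness, pass to a weak limit $E_0$, and then upgrade weak convergence to $\cT$-convergence by ruling out concentration of the $L^p$-mass via the sharp constant $S$. First I would observe that $J_\lambda|_{X^{cyl}}$ is coercive on $\cN_\lambda^{cyl}$ (as noted after Lemma~\ref{LemEstimateLevelsCyl}, by the same argument as Lemma~\ref{eq:lemCoercive}), so $(E_n)$ is bounded and, up to a subsequence, $E_n\weakto E_0$ in $X^{cyl}\subset\cV$; since $\cV$ embeds compactly into $L^2(\Om,\R^3)$ we also get $E_n\to E_0$ in $L^2$ and $E_n(x)\to E_0(x)$ a.e. By the weak-to-weak$^*$ continuity of $J_\lambda'$ on $\cV$ (Lemma~\ref{eq:weaktoweakCritical}) we have $J_\lambda'(E_0)=0$, i.e. $E_0$ is a critical point; in particular $E_0\in\cN_\lambda^{cyl}\cup\{0\}$.

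Next I would analyse the defect of compactness. Write $E_n=v_n^++\tilde v_n$ with respect to $X^{cyl}=\cV^{cyl+}\oplus\tcV^{cyl}$; the finite-dimensional part $\tilde v_n$ converges strongly, so the only possible loss is in $v_n^+$. The key identity, coming from $J_\lambda'(E_n)(E_n)=o(1)\|E_n\|$ and $J_\lambda(E_n)\to\beta$, is
\[
J_\lambda(E_n)-\tfrac12 J_\lambda'(E_n)(E_n)=\Big(\tfrac12-\tfrac1p\Big)|E_n|_p^p=\beta+o(1),
\]
so $|E_n|_p^p\to\big(\tfrac12-\tfrac1p\big)^{-1}\beta$. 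Setting $z_n:=E_n-E_0\weakto 0$, the Brezis--Lieb lemma gives $|E_n|_p^p=|E_0|_p^p+|z_n|_p^p+o(1)$, and using $J_\lambda'(E_n)(E_n)-J_\lambda'(E_0)(E_0)=o(1)$ together with the weak convergence and the $L^2$-strong convergence one extracts the ``energy split'' $\|\curlop z_n\|_2^2-|z_n|_p^p\to 0$ (the $\lambda|z_n|_2^2$ term and the quadratic form on $\tcV^{cyl}$ drop out because $z_n\to 0$ strongly in $L^2$). Now the definition of $S$ forces, for the divergence-free fields $z_n\in\cV$, the inequality $\|\curlop z_n\|_2^2\ge S\,|z_n|_p^2$, hence $|z_n|_p^p\ge S\,|z_n|_p^2+o(1)$. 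If $\ell:=\lim|z_n|_p^p>0$ this yields $\ell\ge S\,\ell^{2/p}$, i.e. $\ell\ge S^{p/(p-2)}$; but then
\[
\beta=\lim J_\lambda(E_n)=J_\lambda(E_0)+\Big(\tfrac12-\tfrac1p\Big)\ell\ge \Big(\tfrac12-\tfrac1p\Big)S^{p/(p-2)},
\]
using $J_\lambda(E_0)\ge0$ (which holds since $E_0$ is either $0$ or a critical point in $\cN_\lambda^{cyl}$, and on $\cN_\lambda^{cyl}$ the energy equals $(\tfrac12-\tfrac1p)|E_0|_p^p\ge0$). This contradicts the hypothesis $\beta<\big(\tfrac12-\tfrac1p\big)S^{p/(p-2)}$, so $\ell=0$, i.e. $|z_n|_p\to 0$, whence $E_n\to E_0$ in $L^p$. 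Then $I_\lambda(E_n)\to I_\lambda(E_0)$ and, by (A3) for $J_\lambda|_{X^{cyl}}$, $E_n\to E_0$ strongly in $X^{cyl}$; in particular $E_n\cTto E_0$, which is the required conclusion.

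The main obstacle I expect is the careful bookkeeping in the energy-splitting step: one must verify that the cross terms $\langle\curlop E_0,\curlop z_n\rangle$ and the contribution of the indefinite quadratic part on $\tcV^{cyl}$ genuinely vanish in the limit, and that Brezis--Lieb applies to the vector-valued nonlinearity $|E|^{p-2}E$ (both for the $|E|_p^p$ term and, in the form $\int\langle|E_n|^{p-2}E_n-|z_n|^{p-2}z_n-|E_0|^{p-2}E_0,\varphi\rangle\to0$, for the derivative term). These are standard once one has a.e. convergence and $L^p$-boundedness, but they are the place where the argument could go wrong if the symmetry subspace $X^{cyl}$ interacted badly with the decomposition — it does not, because $\tcV^{cyl}$ is finite-dimensional and $X^{cyl}\subset\cV$ consists of divergence-free fields, so the constant $S$ defined over all of $\cV$ is available. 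A minor point to state cleanly is why $E_0\in\cN_\lambda^{cyl}$ or $E_0=0$: this is immediate from $J_\lambda'(E_0)=0$ and the characterisation of $\cN_\lambda^{cyl}$ as containing all nontrivial critical points (since $I_\lambda'(E)(E)>2I_\lambda(E)$ for $E\neq0$, as follows from (B3) with $t=0$, $v=0$).
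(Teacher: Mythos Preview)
Your argument is correct in substance and follows the same overall strategy as the paper: coercivity gives boundedness, Lemma~\ref{eq:weaktoweakCritical} gives that the weak limit $E_0$ is a critical point, Brezis--Lieb splits the energy, and the constant $S$ forces the dichotomy. The only difference is cosmetic: after obtaining $J_0'(E_n-E_0)(E_n-E_0)\to 0$ (your ``energy split'' $\|\curlop z_n\|_2^2-|z_n|_p^p\to 0$), the paper invokes inequality \eqref{eq:LemB3checkeqCP} to compare $J_0(z_n)$ with its value at the Nehari projection $t_n z_n$ and then bounds $J_0(t_n z_n)$ below by $(\tfrac12-\tfrac1p)S^{p/(p-2)}$, whereas you skip the projection and read off the dichotomy $\ell=0$ or $\ell\ge S^{p/(p-2)}$ directly from $|z_n|_p^p=\|\curlop z_n\|_2^2+o(1)\ge S|z_n|_p^2+o(1)$. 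Your route is slightly more self-contained; the paper's route stays within its (B3) framework.

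There is one small slip in your last step: you invoke (A3) to pass from $I_\lambda(E_n)\to I_\lambda(E_0)$ to strong convergence, but (A3) has $u_n\cTto u$ as a \emph{hypothesis}, i.e.\ it already assumes $E_n^+\to E_0^+$ strongly, which is exactly what you are trying to prove. The fix is immediate and already contained in your argument: once $\ell=0$, your energy split gives $\|\curlop z_n\|_2^2=|z_n|_p^p+o(1)\to 0$, hence $z_n\to 0$ in $\cV$ and therefore $E_n\to E_0$ strongly in $X^{cyl}$ (so in particular $E_n\cTto E_0$). No appeal to (A3) is needed.
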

\begin{proof}
Let $(E_n)$ be a $(PS)_\beta$-sequence such that $(E_n)\subset \cN_\lambda^{cyl}$. By the coercivity of $J_\lambda$ we observe that $(E_n)$ is bounded and we may assume that $E_n\weakto E_0$ in $X^{cyl}$. In view of (V) and since $X_N(\Om)$ is compactly embedded into $L^2(\Om,\R^3) $ we have that $E_n\to E_0$ in $L^{2}(\Om,\R^3)$ and $E_n(x)\to E_0(x)$ for a.e. $x\in\Om$ passing to a subsequence if necessary. Hence by the weak-to-weak$^*$ continuity of $J'_\lambda$ on $\cN_\lambda^{cyl}$ we infer that $J'_\lambda|_{X^{cyl}}(E_0)=0$. In view of the Brezis-Lieb lemma \cite{BrezisLieb}  we get
$$
\lim_{n\to\infty}\int_{\Om} |E_n|^{p} \,dx -
\int_{\Om} |E_n-E_0|^{p} \,dx
=\int_{\Om} |E_0|^{p} \,dx,$$
hence
\begin{equation}\label{eq:PS1}
\lim_{n\to\infty}\big(J_\lambda(E_n)-J_\lambda(E_n-E_0)\big)=J_\lambda(E_0)\geq 0
\end{equation}
and
\begin{equation}\label{eq:PS22}
\lim_{n\to\infty}\big(J_{0}'(E_n)(E_n)-J_{0}'(E_n-E_0)(E_n-E_0)\big)=J_{0}'(E_0)(E_0).
\end{equation}
Since $E_n\in\cN_\lambda^{cyl}$ and $E_n\to E_0$ in $L^2(\Om,\R^3)$ then
$$J_{0}'(E_n)(E_n)=J'_\lambda(E_n)(E_n)-\lambda\int_{\Om}|E_n|^2\,dx\to 
-\lambda\int_{\Om}|E_0|^2$$
as $n\to\infty$
and 
$$J_{0}'(E_0)(E_0)=J'_\lambda(E_0)(E_0)-\lambda\int_{\Om}|E_0|^2\,dx=
-\lambda\int_{\Om}|E_0|^2\,dx.$$
Therefore by \eqref{eq:PS22}
\begin{equation}\label{eq:PS2}
J_{0}'(E_n-E_0)(E_n-E_0)\to 0
\end{equation}
as $n\to\infty$. 
Note that in view of \eqref{eq:LemB3checkeqCP} we get the following inequality
\begin{equation}\label{eq:PS3}
J_{0}(E_n-E_0)\geq J_{0}(t(E_n-E_0))+J_{0}'(E_n-E_0)\Big(\frac{t^2-1}{2}(E_n-E_0)\Big)
\end{equation}
for any $t\geq 0$.
Suppose that $\liminf_{n\to\infty}\|E_n-E_0\|>0$. Then passing to a 
subsequence 
$$\lim_{n\to\infty}\|E_n-E_0\|>0\hbox{ and }\inf_{n\geq 1}\|E_n-E_0\|>0.$$
%In view of \eqref{eq:PS2}
%$$\lim_{n\to\infty}\int_{\Om}|E_n-E_0|^p\,dx= b^2,$$
Note that, since $J'_\lambda(E_n)(E_n-E_0)\to 0$ we get 
$\liminf_{n\to\infty}|E_n-E_0|_p>0$ and we may assume that 
$$\inf_{n\geq 1}|E_n-E_0|_p>0.$$
Hence 
$$t_n:=\Big(\frac{\|E_n-E_0\|^2}{\int_{\Om}|E_n-E_0|^p\,dx}\Big)^{\frac{1}{p-2}}>0$$
is bounded and
$$J_{0}(t_n(E_n-E_0))\geq\Big(\frac12-\frac1p\Big)S^{\frac{p}{p-2}}.$$
Then \eqref{eq:PS1}, \eqref{eq:PS2} and \eqref{eq:PS3} imply that
$$\beta\geq \lim_{n\to\infty}J_\lambda(E_n-E_0)=
\lim_{n\to\infty}J_{0}(E_n-E_0)\geq \lim_{n\to\infty}J_{0}(t_n(E_n-E_0))\geq\Big(\frac12-\frac1p\Big)S^{\frac{p}{p-2}},$$
which is a contradiction. Therefore, passing to a subsequence, $E_n\to E_0$, hence also in the $\cT$- topology.
\end{proof}

\begin{altproof}{Theorem \ref{thm:main2}}
Let $E\in X^{cyl}$ be an eigenvector corresponding to $\lambda_\nu^{cyl}$ such that $|E|_p=1$. Then, by the definition of $S$ 
 and by the H\"older inequality, we get 
 $$S\mu(\Om)^{\frac{2-p}{p}}\leq \mu(\Om)^{\frac{2-p}{p}}\int_{\Om}|\curlop E|^2\,dx 
 =\lambda_\nu^{cyl}\mu(\Om)^{\frac{2-p}{p}}\int_{\Om}|E|^2\,dx\leq \lambda_\nu^{cyl}.$$
Then, in view of Lemma \ref{LemEstimateLevelsCyl}, we may define
$$\eps_\nu:=\sup\{\eps\in (0,\lambda_\nu^{cyl}]: c_{\lambda_\nu+\eps}^{cyl}<c_0^{cyl}\}\geq S\mu(\Om)^{\frac{p}{p-2}}.$$
By Theorem \ref{ThContinuityuOfClambda}, we infer that $(-\lambda_\nu^{cyl},-\lambda_{\nu-1}^{cyl}]\ni \lambda\mapsto c_{\lambda}^{cyl}\in (0,+\infty)$ is continuous and non-decreasing. Hence $c_{\lambda}^{cyl}<c_0^{cyl}$ for $\lambda\in(-\lambda_{\nu}^{cyl},-\lambda_{\nu}^{cyl}+\eps_\nu)$ and by Theorem \ref{ThLink2} b) we obtain that $c_\lambda^{cyl}$ is attained by a critical point of $J_\lambda$, thus a) is proved. Again, by Theorem \ref{ThContinuityuOfClambda} we show that  the function $(-\lambda_{\nu}^{cyl},-\lambda_{\nu}^{cyl}+\eps_\nu]\cap (-\lambda_\nu^{cyl},-\lambda_{\nu-1}^{cyl}]\ni\lambda\mapsto c_\lambda^{cyl}\in (0,+\infty)$ is continuous and strictly increasing. Hence, taking into account also Lemma \ref{LemEstimateLevelsCyl} we get statement c). If $\eps_\nu<\lambda_\nu^{cyl}-\lambda_{\nu-1}^{cyl}$, then $c_\lambda^{cyl}$ is not attained for $\lambda\in(-\lambda_{\nu}^{cyl}+\eps_\nu,-\lambda_{\nu-1}^{cyl}]$. Indeed, if $c_\lambda^{cyl}$ is attained  and  $\lambda> -\lambda_{\nu}^{cyl}+\eps_n$, then, arguing as in \eqref{eq:Abstractest1}, we get
$$c_0^{cyl}\geq c_\lambda^{cyl}>c_\mu^{cyl}$$
for some $\mu\in (-\lambda_{\nu}^{cyl}+\eps_\nu,\lambda)$, which contradicts the definition of $\eps_\nu$. Hence $c_\lambda^{cyl}$ is not attained and the function  for $\lambda\mapsto c_\lambda^{cyl}\in (0,+\infty)$  is constant for $\lambda\in[-\lambda_{\nu}^{cyl}+\eps_\nu,-\lambda_{\nu-1}^{cyl}]$. The proof of b)  is complete. Now we show d). Let
$$A(\lambda):=\big\{k\geq 1: -\lambda^{cyl}_k <\lambda <-\lambda^{cyl}_k + S\mu(\Om)^{\frac{2-p}{p}}\hbox{ and }\lambda_k>\lambda_{k-1}\}$$
and observe that $\tilde{m}(\lambda)=\sum_{k\in A(\lambda)}m(\lambda_k^{cyl})$.
%$\lambda\in I=\big(-\lambda_\nu^{cyl}, -\lambda_\nu^{cyl}+ S\mu(\Om)^{\frac{2-p}{p}}\big)$. 
In view of Lemma \ref{LemPScond}, $J_\lambda|_{X^{cyl}}$ satisfies the $(PS)_\beta^{\cT}$-condition in $\cN_\lambda^{cyl}$ for $$\beta<\beta_0:=\Big(\frac12-\frac1p\Big)S^{\frac{p}{p-2}}.$$
We estimate $m(\cN_\lambda^{cyl},\beta_0)$ from below. 
Let $\V(\lambda_k^{cyl})$ denote the eigenspace corresponding to $\lambda_k^{cyl}$ such that  $\dim\V(\lambda_k^{cyl})=m(\lambda_k^{cyl})$, $k\in A(\lambda)$.
Let 
$$S(\lambda):=S\big(\bigoplus_{k\in A(\lambda)}\V(\lambda_k^{cyl})\big)$$
be the unit sphere in $\bigoplus_{k\in A(\lambda)}\V(\lambda_k^{cyl})\subset \V^{cyl+}$.  Then we may define the continuous map $h:S(\lambda)\to \cN_\lambda^{cyl}$ such  that
$$h(E)=n_{\lambda}(E)\quad\hbox{for }E\in S(\lambda),$$
where $n_\lambda:S(\lambda)\to \cN_\lambda^{cyl}$ is the homeomorphism 
given after Proposition~\ref{prop:nehari}. %Here $S(\V(\lambda_\nu^{cyl}))$ denotes the unit sphere in $\V(\lambda_\nu^{cyl})$. 
Since  $J_\lambda$ is even,  $h$ is odd. Similarly as in Lemma \ref{LemEstimateLevelsCyl}, we show that for $E\in S(\lambda)$
$$J_\lambda(h(E))\leq  \max_{k\in A(\lambda)}\Big(\frac12-\frac1p\Big)(\lambda+\la^{cyl}_{k})^{\frac{p}{p-2}}\mu(\Om)=:\beta$$
and thus 
$$h(S(\lambda))\subset J_\lambda|_{X^{cyl}}^{-1}((0,\beta])\cap \cN_\lambda^{cyl}.$$
Observe that $J_\lambda|_{X^{cyl}}^{-1}((0,\beta])\cap \cN_\lambda^{cyl}$ is closed, symmetric and
$$\gamma(J_\lambda|_{X^{cyl}}^{-1}((0,\beta])\cap \cN_\lambda^{cyl})\geq 
\gamma(S(\lambda))=\tilde{m}(\lambda).$$
Since $\beta<\beta_0$, we obtain
$$m(\cN_\lambda^{cyl},\beta_0)\geq \tilde{m}(\lambda).$$
Therefore, in view of Theorem \ref{ThLink2} c),
there are at least $\tilde{m}(\lambda)$ pairs of critical points $E$ and $-E$ in $\cN_\lambda^{cyl}$. Now let 
 $\mu_n\to\mu_0$ in $I$ as $n\to\infty$, and take any sequence of symmetric ground states $E_n$ of $J_{\mu_n}$ for $n\geq 1$. Let $\mu=\inf\{\mu_n:n\geq 1\}$ and note that 
 $$J_\mu(E_n)\leq J_{\mu_n}(E_n)=c_{\mu_n}\to c_{\mu_0}$$
 as $n\to\infty$.
By the coercivity  of $J_\mu$ we obtain that $(E_n)$ is bounded, and
observe that
$$J_{\mu_0}(E_n)=c_{\mu_n}+\frac{\mu_0-\mu_n}{2}\int_{\Om}|E_n|^2\,dx\to c_{\mu_0}.$$
Moreover 
$$J_{\mu_0}'(E_n)(E)=(\mu_0-\mu_n)\int_{\Om}\langle E_n, E\rangle\,dx
\leq (\mu_0-\mu_n) C \|E\|$$
for $E\in X^{cyl}$, where $C>0$ is a constant  independent on $n$.
Thus  $J_{\mu_0}'(E_n)\to 0$ and $(E_n)$ is a Palais-Smale sequence at level $c_{\mu_0}$ of $J_{\mu_0}$. Since $\mu_0<\beta_0$, arguing as in Lemma \ref{LemPScond}, we infer that passing to a subsequence $E_n\to E_0$ in $X^{cyl}$. Then we easily see that $E_0$ is a symmetric ground state of $J_{\mu_0}$.
\end{altproof}

%\begin{Rem}
%By the inspection in proof ...
%\end{Rem}

\section{Anisotropic and cylindrically symmetric media}\label{sec:AM}

\indent In the last section we present results on more general uniaxial (anisotropic and $G$-invariant) domains  \cite{BartschMederski2,FundPhotonics,StuartZhou03}, which 
are obtained,  again by means of the methods of Section \ref{sec:Nehari}. Namely, we look for solutions of the following equation
\begin{equation}\label{eq:aniso}
\curlop\left(\mu(x)^{-1}\curlop E\right) - V(x)E = f(x,E)\quad\hbox{in }\Om
\end{equation}
together with boundary condition \eqref{eq:bc}. Recall that the anisotropic Brezis–Nirenberg-type variant of \eqref{eq:BNproblem} has been recently studied by Clapp, Pistoia and Szulkin \cite{ClappSzulkinPistoia}. In \eqref{eq:aniso},
the permeability tensor $\mu(x)$ is of the form
\begin{equation}\label{eq:formOfeps1}
\mu(x) = \begin{pmatrix}a(x) & 0 & 0\\0 & a(x) & 0\\0 & 0 & b(x)\end{pmatrix},
\end{equation}
with $a,b\in L^{\infty}(\Om)$ positive, bounded away from $0$, and invariant with respect to the action of $G$ on $\Om$; similarly for $V(x)=\omega^2\eps(x)$, hence for the permittivity tensor $\eps(x)$.
For simplicity, we restrict our considerations to the following nonlinearity $f(x,E)=\partial_E F(x,E)$, where $$F(x,E)=|\Gamma(x) E|^p,$$ 
$2<p\leq 6$ and $\Gamma(x)$ has the similar matrix form \eqref{eq:formOfeps1}, i.e.  the corresponding coefficients $a_\Gamma,b_\Gamma\in L^{\infty}(\Om)$ are positive, bounded away from $0$, and invariant with respect to the action of $G$. The interested reader may play with other types of nonlinearities similarly as in \cite{BartschMederski2}, such that the Nehari-Pankov manifold can be set up for \eqref{eq:aniso}. As opposed to \cite{BartschMederski2}, we do not assume that the following subspace
\[
\cV = \left\{v\in H_0(\curl;\Om): \int_\Om\langle V(x)v,\varphi\rangle\,dx=0
\text{ for every $\varphi\in \cC^\infty_0(\Om,\R^3)$ with $\curlop\varphi=0$} \right\}.
\]
is compactly embedded into $L^p(\Om,\R^3)$ for some $2<p<6$, but we work with condition $(V)$, in which we use the above definition of $\V$ for the anisotropic media.
Hence the energy functional $J:W_0^p(\curl;\Om)\to\R$ given by
$$J(E)=\frac12\int_\Om \langle \mu(x)^{-1}\curlop E, E\rangle\, dx + \frac{1}{2}\int_{\Om} \langle V(x)E,E\rangle\, dx-\int_\Om |\Gamma(x)E|^p\, dx$$
is of $\cC^1$-class and $G$-invariant.
Similarly as above, we may define a subspace $X^{cyl}\subset \V$ of fields of the form \eqref{eq:main} (see \cite{BartschMederski2}[Lemma 6.2]), and there is a discrete sequence of anisotropic eigenvalues 
$$0<\la_1^{cyl}\le \la_2^{cyl}\le\dots\le\la_k^{cyl}\to\infty$$ of the following problem
$$
\curlop(\mu(x)^{-1}\curlop v) = \la V(x)v
$$
in $X^{cyl}$ with the corresponding finite multiplicities $m(\lambda_k^{cyl})\in\N$; see \cite{BartschMederski2}[Corollary 3.3]. We find two closed subspaces $\cV^{cyl+}$ and $\tcV^{cyl}$ of $X^{cyl}$ such that the quadratic form
$$
Q(v):=\int_\Om \left(\langle\mu(x)^{-1}\curlop v,\curlop v\rangle -
\langle V(x)v,v\rangle\right)\,dx,
$$
is positive on $\cV^{cyl+}$ and semi-negative on $\tcV^{cyl}$. Then $\tcV^{cyl}$ is the finite sum of the eigenspaces associated to all $\la_k^{cyl}\le1$, and $\cV^{cyl+}$ is the infinite sum of the eigenspaces associated to the eigenvalues $\la_k^{cyl}>1$. Here $\tcV^{cyl}=\{0\}$ if $\la_1^{cyl}>1$. Then, the symmetric Nehari-Pankov manifold in the anisotropic case is given by 
\begin{equation}\label{eq:DefOfNcylAni}
\cN:=\{E\in X^{cyl}\setminus\tcV^{cyl}:\; J'(E)|_{\R E\oplus\tcV^{cyl}}=0\},
\end{equation}
and
$$c:=\inf_{\cN}J.$$
Since $\mu$ is of the form \eqref{eq:formOfeps1}, we define 
\begin{eqnarray*}
%\mu_0&:=&\inf_{x\in\Om}\min\{|a(x)|,|b(x)|\},\\ 
\mu_{\infty}&:=&\max\{|a|_\infty,|b|_\infty\}
\end{eqnarray*}
and similarly we get $V_\infty$ and $\Gamma_\infty$ for $V$ and $\Gamma$ respectively. Moreover let 
$$\Gamma_0:=\inf_{x\in\Om}\min\{|a_\Gamma(x)|,|b_\Gamma(x)|\}.$$
%where $a_\Gamma$ and $b_\Gamma$ are corresponding coefficients of the matrix form \eqref{eq:formOfeps1} of $\Gamma$.
The existence result reads as follows.
\begin{Th}\label{thm:main3}
%	Let  $\nu=\min\{k\in\N_0:\la_k^{cyl}>1\}$ and 
Suppose that 
%	\begin{equation}\label{eq:thmain3}
%(\lambda_\nu^{cyl}-1)V_\infty\mu_\infty\big(\Gamma_\infty/\Gamma_0\big)^{\frac{2}{p}} < S\mu(\Om)^{\frac{2-p}{p}}.
%	\end{equation}
\begin{equation}\label{eq:thmain3}
\tilde{m}:=\sharp\big\{k: 0<(\lambda_k^{cyl}-1)V_\infty\mu_\infty\big(\Gamma_\infty/\Gamma_0\big)^{\frac{2}{p}} < S\mu(\Om)^{\frac{2-p}{p}}\big\}\geq 1.
\end{equation}
Then $c>0$ and there is a symmetric ground state solution to \eqref{eq:aniso}, i.e. $c$ is attained by a critical point of $J$. Moreover,
there are at least $\tilde{m}$ pairs of solutions $E$ and $-E$ to \eqref{eq:aniso} of the form \eqref{eq:sym}. 
\end{Th}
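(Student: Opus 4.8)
The plan is to transplant the proof of Theorem~\ref{thm:main2} to the anisotropic symmetric setting, the one-parameter family there being replaced here by the single functional $J$ together with an associated compactly perturbed functional $J_{cp}$. By the principle of symmetric criticality it suffices to produce critical points of $J|_{X^{cyl}}$. Writing $A(E_1,E_2):=\int_\Om\langle\mu(x)^{-1}\curlop E_1,\curlop E_2\rangle\,dx$, the compact embedding $\cV\hookrightarrow L^2(\Om,\R^3)$ (valid for the anisotropic $\cV$ by \cite{Amrouche}) makes $A$ positive definite on $X^{cyl}\subset\cV$ and equivalent there to the graph norm, while the eigenspace splitting $X^{cyl}=\cV^{cyl+}\oplus\tcV^{cyl}$ is simultaneously $A$- and $V$-orthogonal. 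Hence $J|_{X^{cyl}}$ has the form \eqref{EqJ} with $X^+=\cV^{cyl+}$, $\tX=\tcV^{cyl}$, $\|E^+\|^2=A(E^+,E^+)$ and $I(E)=\tfrac12\bigl(\int_\Om\langle V(x)E,E\rangle\,dx-A(\widetilde E,\widetilde E)\bigr)+\int_\Om|\Gamma(x)E|^p\,dx\ge0$, nonnegativity and convexity of $I$ following from $\lambda_k^{cyl}\le1$ on $\tcV^{cyl}$ and $V\ge0$. One then verifies, repeating the proofs of Lemmas~\ref{eq:LemA1A4}, \ref{eq:LemB2C2}, \ref{eq:lemCoercive} and \ref{eq:weaktoweakCritical} essentially verbatim, that $J|_{X^{cyl}}$ satisfies (A1)--(A4), (B1)--(B3), (C2), (C3) and is coercive on $\cN$: boundedness away from zero of $a,b,a_\Gamma,b_\Gamma$ is what yields coercivity, convexity of $\xi\mapsto|\Gamma(x)\xi|^p$ replaces Lemma~\ref{eq:LemB3check} in (B3), and (C3) uses the compact embedding into $L^2$, a.e.\ convergence and uniform integrability of the nonlinear term. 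In particular $c=\inf_\cN J\ge a>0$ by \eqref{eq:groundstlevel}.

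Next I would introduce $J_{cp}(E):=\tfrac12 A(E,E)-\int_\Om|\Gamma(x)E|^p\,dx$ on $X^{cyl}$, with $X^0:=\tcV^{cyl}$ and $X^1:=\{0\}$, so that $\cN_{cp}$ is the ordinary Nehari manifold of $J_{cp}$. It inherits (A1)--(A4), (B1)--(B3) and coercivity on $\cN_{cp}$, while $J-J_{cp}=-\tfrac12\int_\Om\langle V(x)\cdot,\cdot\rangle\,dx$ together with the compactness of $\cV\hookrightarrow L^2$ and $V\in L^\infty$ gives (C1). Writing $S_\Gamma:=\inf\{A(v,v):v\in X^{cyl},\ |\Gamma v|_p=1\}$, the ray-maximisation of Lemma~\ref{LemEstimateLevelsCyl} bounds $d:=\inf_{\cN_{cp}}J_{cp}$ below by a $p$-dependent positive multiple $\beta_0$ of $S_\Gamma^{p/(p-2)}$, and the pointwise estimates $\langle\mu(x)^{-1}\xi,\xi\rangle\ge\mu_\infty^{-1}|\xi|^2$ and $|\Gamma(x)\xi|\le\Gamma_\infty|\xi|$ together with the definition of $S$ yield $S_\Gamma\ge S\mu_\infty^{-1}\Gamma_\infty^{-2}$. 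In the opposite direction, testing with an eigenfunction $e_k$ of $\lambda_k^{cyl}>1$ and the corresponding Nehari--Pankov point $te_k+\widetilde v$, and using that $A$- and $V$-orthogonality together with $\lambda_j^{cyl}\le1<\lambda_k^{cyl}$ on $\tcV^{cyl}$ give $A(te_k+\widetilde v,te_k+\widetilde v)\le\lambda_k^{cyl}\int_\Om\langle V(x)(te_k+\widetilde v),te_k+\widetilde v\rangle\,dx$, one obtains $J(te_k+\widetilde v)\le\tfrac{\lambda_k^{cyl}-1}{2}\int_\Om\langle V(x)(te_k+\widetilde v),te_k+\widetilde v\rangle\,dx-\int_\Om|\Gamma(x)(te_k+\widetilde v)|^p\,dx$, and then H\"older's inequality and $|\Gamma(x)\xi|\ge\Gamma_0|\xi|$ bound $c$ above by a $p$-dependent multiple of $\bigl((\lambda_k^{cyl}-1)V_\infty\mu(\Om)^{(p-2)/p}\Gamma_0^{-2}\bigr)^{p/(p-2)}$. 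Condition \eqref{eq:thmain3} is precisely what makes the resulting comparison force the level of each admissible eigenfunction --- in particular $c$ --- to lie strictly below $\beta_0\le d$.

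With these ingredients the conclusion follows as in Theorem~\ref{thm:main2}. The $(PS)_\beta^\cT$-condition in $\cN$ for $\beta<\beta_0$ is proved as in Lemma~\ref{LemPScond}: a $(PS)_\beta$-sequence is bounded by coercivity, $E_n\weakto E_0$, $E_n\to E_0$ in $L^2$ and a.e., $J'(E_0)|_{X^{cyl}}=0$ by (C3), the Brezis--Lieb lemma applied to $\int_\Om|\Gamma E_n|^p\,dx$ forces $J_{cp}'(E_n-E_0)(E_n-E_0)\to0$ and $J(E_n-E_0)\to\beta-J(E_0)$, and if $E_n\not\to E_0$ the ray-maximisation for $J_{cp}$ along $E_n-E_0$ gives $\liminf J_{cp}(E_n-E_0)\ge\beta_0$, hence $\beta\ge J(E_0)+\beta_0\ge\beta_0$, a contradiction. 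Since $c<d$, Theorem~\ref{ThLink2}~b) yields a symmetric ground state, which by symmetric criticality is a solution of \eqref{eq:aniso} of the form \eqref{eq:sym}, and $c>0$ was already noted. For the multiplicity statement, set $A:=\{k:\lambda_k^{cyl}>1,\ (\lambda_k^{cyl}-1)V_\infty\mu_\infty(\Gamma_\infty/\Gamma_0)^{2/p}<S\mu(\Om)^{(2-p)/p}\}$; the restriction to $S(\bigoplus_{k\in A}\V(\lambda_k^{cyl}))$ of the homeomorphism $n$ from Proposition~\ref{prop:nehari} is odd (as $J$ is even) and has image a compact symmetric subset of $J|_{X^{cyl}}^{-1}((0,\beta])\cap\cN$ with $\beta<\beta_0$ (again by \eqref{eq:thmain3}), so $m(\cN,\beta_0)\ge\gamma\bigl(S(\bigoplus_{k\in A}\V(\lambda_k^{cyl}))\bigr)=\tilde m$, and Theorem~\ref{ThLink2}~c) produces $\tilde m$ pairs $E,-E$ of critical points, i.e.\ $\tilde m$ pairs of solutions of \eqref{eq:aniso} of the form \eqref{eq:sym}.

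The main obstacle I expect is twofold: the $(PS)_\beta^\cT$-argument at the critical exponent via Brezis--Lieb, which is the one genuinely non-formal step, and the bookkeeping that pins down $\beta_0$ and the upper estimate for $c$ --- one must carefully track the essential extrema $\mu_\infty,V_\infty,\Gamma_\infty,\Gamma_0$ and the measure $\mu(\Om)$ through the H\"older and ray-maximisation steps so that \eqref{eq:thmain3} emerges exactly as the inequality guaranteeing $c<\beta_0\le d$. By contrast, the structural hypotheses of Theorem~\ref{ThLink2} transfer routinely from the isotropic case.
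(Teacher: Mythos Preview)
Your proposal is correct and follows essentially the same route as the paper's own proof: put $J|_{X^{cyl}}$ into the form \eqref{EqJ} via the $\mu^{-1}$-weighted scalar product on $X^{cyl}$, take $J_{cp}=J_0|_{X^{cyl}}$ with $X^0=\tcV^{cyl}$ and $X^1=\{0\}$, verify (A1)--(A4), (B1)--(B3), (C1)--(C3) and coercivity as in Section~\ref{sec:varsetting}, bound $d$ from below and $c$ from above via an eigenfunction $e_k$ with $\lambda_k^{cyl}>1$, and conclude by Theorem~\ref{ThLink2} b) and c) together with the $(PS)_\beta^\cT$-argument of Lemma~\ref{LemPScond}. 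The one step the paper makes explicit beyond your sketch is the anisotropic Brezis--Lieb identity for $\int_\Om|\Gamma(x)E_n|^p\,dx$, which it obtains by writing the difference as $\int_0^1\frac{d}{dt}|\Gamma(x)(E_n+(t-1)E_0)|^p\,dt$ and applying Vitali's convergence theorem; this is exactly the device needed to justify the step you flag.
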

\begin{proof}
Note that $X^{cyl}$ is a Hilbert space with the scalar product
	\begin{equation*}
	\langle u,v \rangle_\mu := \int_{\Om}\langle \mu(x)^{-1}\curlop u,\curlop v\rangle\, dx
	\end{equation*}
and $\cV^{cyl+}$ and $\tcV^{cyl}$ are orthogonal with respect to $\langle \cdot,\cdot \rangle_\mu$ and with respect to  the scalar product in $L^2(\Om,\R^3)$ given by
\begin{equation*}
\langle u,v \rangle_V := \int_{\Om}\langle V(x) u,v\rangle\, dx.
\end{equation*}
We consider  the norm $\|v\|_\mu = 	\langle v,v \rangle_\mu$ in $\V$, which is equivalent with the usual one.
Observe that for $E=v^++\tv\in X^{cyl}=\V^{cyl+}\oplus\tcV^{cyl}$ we get
$$J(E)=\frac{1}{2}\|v^+\|_\mu^2 - I(E),$$
where 
$$I(E):=-\frac12\|\tv\|_\mu^2 + \frac12\int_\Om\langle V(x)E,E\rangle\,dx
 + \int_\Om |\Gamma(x)E|^p\,dx.$$
Therefore $J:X^{cyl}\to \R$ is of the form \eqref{EqJ}, and similarly as in Section \ref{sec:varsetting}, we check that assumptions (A1)-(A4), (B1)-(B3), (C1)-(C3) are satisfied, where the compactly perturbed functional is given as follows
$$J_{cp}(E)=\frac{1}{2}\|E\|_\mu^2 - I_{cp}(E),$$
where 
$$I_{cp}(E):= \int_\Om |\Gamma(x)E|^p\,dx$$
for $E\in X^{cyl}$.
Similarly as in Lemma \ref{LemEstimateLevelsCyl} we check that
$$d:=\inf_{\cN_{cp}}J_{cp}\geq \Big(\frac12-\frac1p\Big)S^{\frac{p}{p-2}}\mu_{\infty}^{-\frac{p}{p-2}}\big(p\Gamma_\infty\big)^{-\frac{2}{p-2}}.$$
On the  other hand, we show that
$$c<\Big(\frac12-\frac1p\Big)\big((\lambda_k^{cyl}-1)|V|_\infty\big)^{\frac{p}{p-2}}\mu(\Om)\big(p\Gamma_0\big)^{-\frac{2}{p-2}}$$
for every $k\geq 1$ such that
\begin{equation}\label{eq:anis_k}
0<(\lambda_k^{cyl}-1)V_\infty\mu_\infty\big(\Gamma_\infty/\Gamma_0\big)^{\frac{2}{p}} < S\mu(\Om)^{\frac{2-p}{p}}.
\end{equation}
Indeed, let  $e_k\in \V^{cyl+}$ be an eigenvector corresponding to $\lambda_k^{cyl}$, i.e. 
	$$Q(e_k)=(\lambda_k^{cyl}-1)\int_{\Om}\langle V(x) e_k,e_k\rangle\, dx$$
for $k\geq 1$ such that \eqref{eq:anis_k} holds.
	Then $v=te_k+\tv\in\cN$ for some $t>0$ and $\tv\in\tcV^{cyl}$.  Observe that
	\begin{eqnarray*}
		c&\leq&J(te_k+\tv)=
	\frac12Q(te_k+\tv)
		- \int_{\Om}|\Gamma(x)v|^p\,dx\\
		&=&	\frac12Q(te_k)+\frac12Q(\tv)
		- \int_{\Om}|\Gamma(x)v|^p\,dx\\
		&\leq& 
		\frac12(\lambda_k^{cyl}-1)\int_{\Om}\langle V(x) e_k,e_k\rangle\, dx
		- \int_{\Om}|\Gamma(x)v|^p\,dx\\
		&\leq& 
		\frac12(\lambda_k^{cyl}-1)\int_{\Om}\langle V(x) v,v\rangle\, dx
		- \int_{\Om}|\Gamma(x)v|^p\,dx\\
		&\leq& 
		\frac12(\lambda_k^{cyl}-1)|V|_{\infty}\int_{\Om}|v|^2\, dx
		- \Gamma_0\int_{\Om}|v|^p\,dx\\
		&\leq& 
			\frac12(\lambda_k^{cyl}-1)|V|_{\infty}\mu(\Om)^{\frac{p-2}{p}}\Big(\int_{\Om}|v|^p\, dx\Big)^{\frac{2}{p}}
			- \Gamma_0\int_{\Om}|v|^p\,dx\\
&\leq& \Big(\frac12-\frac1p\Big)\big((\lambda_k^{cyl}-1)|V|_\infty\big)^{\frac{p}{p-2}}\mu(\Om)\big(p\Gamma_0\big)^{-\frac{2}{p-2}}.
	\end{eqnarray*}
Hence \eqref{eq:thmain3} implies that $c<d$ and in view of Theorem \ref{ThLink2} b) we infer that $c$ is attained by a critical point. Similarly as in Lemma \ref{LemPScond} we show that $J$ satisfies the $(PS)_\beta^{\cT}$-condition in $\cN$ for
$$\beta <\beta_0:=\Big(\frac12-\frac1p\Big)S^{\frac{p}{p-2}}\mu_{\infty}^{-\frac{p}{p-2}}\big(p\Gamma_\infty\big)^{-\frac{2}{p-2}}.$$
 We only check the following variant of the Brezis-Lieb lemma 
\begin{equation}\label{eq:BLlemmaAni}
\lim_{n\to\infty}\int_{\Om} |\Gamma(x)E_n|^{p} \,dx -
\int_{\Om} |\Gamma(x)(E_n-E_0)|^{p} \,dx
=\int_{\Om} |\Gamma(x)E_0|^{p} \,dx
\end{equation}
for $E_n\weakto E_0$ in $L^p(\Om,\R^3)$ and $E_n\to E_0$ a.e. on $\Om$,
and the remaining arguments are analogous. Observe that in view of the Vitali convergence theorem
\begin{eqnarray*}
	&&\int_\Om |\Gamma(x)E_n|^p - |\Gamma(x)(E_n-E_0)|^p\,dx\\
	&&\hspace{1cm}
	= \int_\Om\int_0^1\frac{d}{dt}|\Gamma(x)(E_n+(t-1)E_0)|^p\,dtdx\\
	&&\hspace{1cm}
	= \int_0^1\int_\Om\langle p |\Gamma(x)(E_n+(t-1)E_0)|^{p-2}\Gamma(x)(E_n+(t-1)E_0),
\Gamma(x)E_0\rangle\,dxdt\\
&&\hspace{1cm}\to 
\int_0^1\int_\Om\langle p |\Gamma(x)tE_0|^{p-2}\Gamma(x)tE_0,
\Gamma(x)E_0\rangle\,dxdt\\
&&\hspace{1cm}=
\int_\Om |\Gamma(x)E_0|^p\, dx,
\end{eqnarray*}
hence we get \eqref{eq:BLlemmaAni}. Now, similarly as in the proof of Theorem \ref{thm:main2}, we show that 
$$m(\cN,\beta_0)\geq \tilde{m}$$
and in view of Theorem \ref{ThLink2} c) we conclude.
\end{proof}

{\bf Acknowledgements.}
The author was partially supported by the National Science Centre, Poland (Grant No. 2014/15/D/ST1/03638) and he would like to thank the referee for many valuable
comments helping to improve the paper.

\end{document}